\date{\today}
\numberwithin{equation}{section}
\newtheorem{theorem}{Theorem}[section]
\newtheorem{lemma}[theorem]{Lemma}
\newtheorem{proposition}[theorem]{Proposition}
\newtheorem{corollary}[theorem]{Corollary}
\theoremstyle{definition}
\newtheorem{assumption}[theorem]{Assumption}
\newtheorem{definition}[theorem]{Definition}
\theoremstyle{remark}
\newtheorem{remark}[theorem]{Remark}
\newcommand\bR{\mathbb{R}}
\newcommand\bP{\mathbb{P}}
\newcommand\cF{\mathcal{F}}
\newcommand{\bT}{\mathbb{T}}
\newcommand{\bN}{\mathbb{N}}
\newcommand{\wto}{\rightharpoonup}
\newcommand{\tr}{\tilde{r}}
\newcommand*{\beq}{\begin{equation}}
\newcommand*{\eeq}{\end{equation}}
\newcommand{\E}{\mathbb{E}}
\newcommand{\bit}{\begin{itemize}}
\newcommand{\eit}{\end{itemize}}
\newcommand{\supp}{\text{supp}\,}
\newcommand{\D}{\partial}
\newcommand{\tu}{{\tilde{u}}}
\newcommand{\tb}{{\tilde{b}}}
\newcommand{\eps}{\varepsilon}
\newcommand{\T}{\mathbb{T}}
\newcommand{\fra}{\mathfrak{a}}
\newcommand{\tfra}{{\tilde{\mathfrak{a}}}}
\renewcommand{\le}{\lesssim}
\newcommand{\tsigma}{{\tilde\sigma}}
\newcommand{\txi}{{\tilde\xi}}
\newcommand{\N}{\mathbb{N}}
\newcommand{\ta}{\tilde{a}}
\DeclareRobustCommand\widecheck[1]{{\mathpalette\@widecheck{#1}}}
\def\@widecheck#1#2{%
    \setbox\z@\hbox{\m@th$#1#2$}%
    \setbox\tw@\hbox{\m@th$#1%
       \widehat{%
          \vrule\@width\z@\@height\ht\z@
          \vrule\@height\z@\@width\wd\z@}$}%
    \dp\tw@-\ht\z@
    \@tempdima\ht\z@ \advance\@tempdima2\ht\tw@ \divide\@tempdima\thr@@
    \setbox\tw@\hbox{%
       \raise\@tempdima\hbox{\scalebox{1}[-1]{\lower\@tempdima\box
\tw@}}}%
    {\ooalign{\box\tw@ \cr \box\z@}}}
\DeclareMathOperator*{\esssup}{ess\,sup}
\begin{document}
\title{Nonlinear diffusion equations with nonlinear gradient noise}
\author{Konstantinos Dareiotis and  Benjamin Gess}
\begin{abstract}
We prove the existence and uniqueness of entropy solutions for nonlinear diffusion equations with nonlinear conservative gradient noise. As particular applications our results include stochastic porous media equations, as well as the one-dimensional stochastic mean curvature flow in graph form.
\end{abstract}
\maketitle

\section{Introduction}

In this work we consider stochastic partial differential equations of the type
\begin{equs}             \label{eq:equation-stratonovich}
\begin{aligned}
du &= \Big(  \Delta \Phi(u) + \nabla \cdot G(x,u) \Big) \, dt + \sum_{k=1}^\infty\left( \nabla \cdot \sigma^k(x, u)\right)  \circ d\beta^k(t) \quad\text{on }(0,T)\times\T^d
\\
u(0,x)&=\xi(x),
\end{aligned}
\end{equs} 
where $\T^d$ is the $d$-dimensional torus, $\beta^k$ are independent $\bR$-valued Brownian motions, $\Phi:\bR\to\bR$ is a monotone function (cf.\ Assumption \ref{as:A} below) and the coefficients $G: \bT^d \times \bR \to \bR^d$, $\sigma^k :\bT^d \times \bR \to \bR^d$ are regular enough (cf.\ Assumption \ref{as:sigma} below). The main results of this work are the existence and uniqueness of entropy solutions to \eqref{eq:equation-stratonovich} (Theorem \ref{thm:main-theorem} below) and the stability of \eqref{eq:equation-stratonovich} with respect to $\Phi$ (Theorem \ref{thm:uniqueness} below).

Stochastic partial differential equations of the type \eqref{eq:equation-stratonovich} arise as limits of interacting particle systems driven by common noise, with notable relation to the theory of mean field games \cite{LL06,LL06-2,LL07}, in the graph formulation of the stochastic mean curvature/curve shortening flow \cite{KO82,SY04,DLN01,ESR12} and as simplified approximating models of fluctuations in non-equilibrium statistical physics \cite{DSZ16}. We refer to \cite{FG18} and the references therein for more details on these applications. In particular, the results of this work imply the well-posedness of the stochastic mean curvature flow in one spatial dimension with spatially inhomogeneous noise, in the graph form, 
\begin{equs}                \label{eq:mean-curvature}
du =   \frac{\D^2_{xx} u}{1+|\D_x u|^2}  \, dt + \sum_{k=1}^\infty h^k(x)\sqrt{1+|\D_xu|^2} \circ d \beta^k(t),
\end{equs}
and thus extend the works \cite{ESR12,GR17} which were restricted to noise either satisfying a smallness condition or being independent of the spatial variable. For an alternative approach to stochastic mean curvature with spatially inhomogeneous noise based on stochastic viscosity solutions see \cite{Soug1, Soug2, Soug3} and the references therein. 

Generalized stochastic porous medium equations of the type
\begin{equs}             \label{eq:intro_PME}
\begin{aligned}
du(t,x) &= \Delta \Phi(u(t,x)) \, dt + B(u)dW_t
\end{aligned}
\end{equs}  
have attracted considerable interest and their well-posedness has been obtained for several classes of nonlinearities $\Phi$, diffusion operators $B$, boundary conditions and lower order perturbations. We refer to the monographs \cite{P75,KR79,PR07,LR15,BDPR16} for a detailed account on these developments and to \cite{Witt,GT14,BR15,BR17,DHV16,G12,GH18,FG18-2} and the references therein for recent contributions. While \textit{linear} gradient noise (cf.\ e.g.\ \cite{DG17,T18,MR18}), that is, $\sigma^k(x,u)=h^k(x)u$ in \eqref{eq:equation-stratonovich} to some extent can be treated by these methods, the \textit{nonlinear} structure of the gradient noise in \eqref{eq:equation-stratonovich} requires entirely different techniques. 
Only in recent years, in a series of works \cite{LPS13,LPS14,GS17-2,GS14-2,GS16-2,GS14} a kinetic approach to (simpler versions of) \eqref{eq:equation-stratonovich} was developed based on rough path methods, cf. also \cite{KarlsenRisebro,SougGess,Gassiat1,Gassiat2}.  for numerical methods and regularity/qualitative properties of the solutions. In the most recent contribution \cite{FG18} the path-by-path well-posedness of kinetic solutions to  \eqref{eq:equation-stratonovich}, with $\Phi(u)=u|u|^{m-1}$ for $m \in (0,\infty)$ (fast and slow diffusion),  was proved for the first time for non-negative initial data, while for sign-changing  data the uniqueness was restricted to the case $m >2$. As it is well-known from the theory of rough paths, such path-by-path methods require stronger regularity assumptions on the diffusion coefficients than what would be expected based on probabilistic methods. More precisely, when applied to \eqref{eq:equation-stratonovich}, the results of \cite{FG18} require 
 $\sigma^k(x,u) \in C^{\gamma}_b(\T^d \times \bR)\quad\forall k\in\N,$
for some $\gamma > 5$. Moreover, the construction of kinetic solutions presented in \cite{FG18} relies on the fractional Sobolev regularity of the solutions, which is available only in the particular case $\Phi(u)=|u|^{m-1}u$, $m\in (0,\infty)$. 

The key aims of the current work are to obtain well-posedness without sign restrictions on the initial data that covers the full spectrum of $m$ for the slow diffusion ($m>1$), to relax the regularity assumptions on the diffusion coefficients $\sigma^k$, and to treat a general class of diffusion nonlinearities $\Phi$. These aims are achieved by developing a  probabilistic entropy approach to \eqref{eq:equation-stratonovich} leading to the relaxed regularity assumption (cf.\ Assumption \ref{as:sigma} below for details) $\sigma^k(x,u) \in C^3_b(\T^d \times \bR)\quad\forall k\in\N$.
The treatment of general diffusion nonlinearites $\Phi$ is achieved by using quantified compactness in order to prove stability of \eqref{eq:equation-stratonovich} with respect to variations in $\Phi$. Based on this, the strong convergence of approximations can be shown, without relying on the compactness arguments from \cite{FG18} which were restricted to the case $\Phi(u)=|u|^{m-1}u$. In particular, this generalization allows the application to the stochastic mean curvature flow. The proof of stability relies on entropy techniques and a careful control of the errors arising in the corresponding doubling the variables argument which was  initiated in \cite{DareiotisGerencserGess} and is disjoint from the kinetic techniques put forward in \cite{FG18}.

The structure of the article is as follows. In Section 2 we formulate our main results concerning equations of porous medium type. In Section 3 we gather some lemmata that are needed for the proof of our main results. In Section 4, we prove the main estimates in $L_1(\bT^d)$ leading to uniqueness and stability and in Section 5 we show existence and uniqueness for non-degenerate equations.  In Section 6 we use the results if the two previous sections in order to prove our main theorem. Finally, in Section 7, we explain the modifications that need to be done in the proof of Theorem \ref{thm:main-theorem} in order to obtain existence and uniqueness of solutions of equation \eqref{eq:mean-curvature}. 

\subsection{Notation} We fix a filtered probability space $(\Omega, ( \mathcal{F}_t)_{ 
\in [0,T]}, \bP )$ carrying a sequence $(\beta^k(t))_{k \in \bN, t \in [0,T]}$ of independent, one-dimensional, $(\mathcal{F}_t)$-Wiener processes.
We introduce the notations $\Omega_T=\Omega\times[0,T]$, $Q_T=[0,T]\times\T^d$.
Lebesgue and Sobolev spaces are denoted in the usual way by $L_p$ and $W^k_p$, respectively.
When a function space is given on $\Omega$ or $\Omega_T$,
we understand it to be defined with respect to $\cF:=\cF_T$ and the predictable $\sigma$-algebra, respectively. 
In all other cases the usual Borel $\sigma$-algebra will be used. Moreover, throughout the whole article we fix a constant $m >1$. 

We fix a non-negative smooth function $\rho:\bR\to \bR$ which is bounded by $2$, supported in $(0,1)$, integrates to $1$ and, for $\theta>0$, we set $\rho_\theta(r)=\theta^{-1}\rho(\theta^{-1}r)$.
When smoothing in time by convolution with $\rho_\theta$, the property that $\rho$ is supported on positive times will be crucial.
For spatial regularisation this fact will be irrelevant, but for the sake of simplicity, we often use $\rho_\theta^{\otimes d}$ for smoothing in space as well. In the proofs of lemmas/theorems/propositions, we will often use the notation $a \le b$ which means $a \leq N b$ for a constant $N$ which depends only on the parameters stated in the corresponding   lemma/theorem/proposition. For a function  $g: \bT^d \times \bR \to \bR$ we will often use the notation  
\begin{align*}
 [ g ](x,r):= \int_0^r g(x,s) \, ds.
\end{align*}
If $g$ does not depend on $x \in \bT^d$, then we will write $[g](r)$. For a function  $g$ on $\bT^d \times \bR$, we will write $g_r$, $\D_r g$ for the derivative of $g$ with respect to the real variable $r \in \bR$ and $g_{x_i}$, $\D_{x_i}g$ for the partial derivatives of $g$ in the periodic variable $x \in \bT^d$. If $\gamma=(\gamma_1,...,\gamma_d) \in (\bN \cup \{0\})^d$ is a multi-index, we will write $\D^\gamma_xg:= \D^{\gamma_1}_{x_1}...\D^{\gamma_d}_{x_d} g$. For $\beta \in (0,1)$, $C^\beta$ will denote the usual H\"older spaces and $[ \cdot]_{C^\beta}$ will denote the usual semi-norm.  In addition, the summation convention with respect to integer valued indices will be in use. In particular, expressions of the form $ a^i b^i, f^i \D_{x_i}$ and $f^i_{x_i}$ will stand for $\sum_i a^i b^i, 
\sum_i f^i \D_{x_i}$ and $\sum_i f^i_{x_i}$ respectively, unless otherwise stated.
Finally, when confusion does not arise, in integrals we will drop some of the integration variables from the integrands for notational convenience. 
\section{Formulation and main results} \label{sec:formulation}

For $i,j \in \{1,...,d\}$, let us set 
\begin{equ}
a^{ij}(x,r)= \frac{1}{2}\sum_{k=1}^\infty\sigma^{ik}_r(x,r) \sigma^{jk}_r(x,r),\qquad b^i(x,r)= \sum_{k=1}^\infty \sigma^{ik}_r (x,r) \sum_{j=1}^d\sigma_{x_j}^{jk}(x, r),
\end{equ}
and 
\begin{equs}
f^i(x,r)= G^i(x,r)-\frac{1}{2}b^i(x,r).
\end{equs}
With this notation we rewrite \eqref{eq:equation-stratonovich} in It\^o form 
\begin{equs}         \label{eq:main-equation}
\begin{aligned}
du &= \left( \Delta \Phi(u)  + \D_{x_i} \left(a^{ij}(x,u)\D_{x_j} u + b^i(x,u)+  f^i(x,u) \right) \right) \, dt 
\\
& + \D_{x_i} \sigma^{ik}(x,u)\,  d\beta^k(t) 
\\
u(0)&= \xi.
\end{aligned}
\end{equs}
\begin{remark}
Formally, we have 
\begin{equs}
\D_{x_i} \sigma^{ik}(x,u) \circ d\beta^k(t) =& \D_{x_i} \sigma^{ik}(x,u)  d\beta^k(t)
\\
+&   \D_{x_i} \left(a^{ij}(x,u)\D_{x_j} u \right)+ \frac{1}{2}\D_{x_i} b^i(x,u) \, dt.
\end{equs}
In \eqref{eq:main-equation} we add  $b^i/2$ and then we subtract it from $G^i$ in order to make cancellations with terms coming from the It\^o correction when applying It\^o's formula apparent.  Despite the fact that $\D_{x_i} b^i$ and $\D_{x_i} f^i$ are of the same nature, they will be treated slightly differently to exploit these cancellations. 
 
\end{remark}
We will often write  $\Pi(\Phi, \xi)$ to address equation \eqref{eq:main-equation} with initial condition $\xi$ and nonlinearity $\Phi$. 
To formulate the assumptions on $\Phi$ let us set 
\begin{equs}
\fra(r)= \sqrt{\Phi'(r)}.
\end{equs}

\begin{assumption}\label{as:A}
The following hold:
\begin{enumerate}[(a)]
\item \label{as:A first}
The function $\Phi:\bR\mapsto\bR$ is differentiable, strictly increasing and odd. The function $\fra$ is differentiable away from the origin, and satisfies the bounds
\begin{equ}\label{eq:as fra}
|\fra(0)|\leq K,
\quad |\fra'(r)|\leq K|r|^{\frac{m-3}{2}}\quad\text{if}\ r>0,
\end{equ}
as well as
\begin{equation}\label{eq:as Psi}
K\fra(r)\geq I_{|r|\geq 1},\quad
  K|[\fra](r)-[\fra](\zeta)|\geq\left\{
  \begin{array}{@{}ll@{}}
    |r-\zeta|, & \text{if}\ |r|\vee|\zeta|\geq 1, \\
    |r-\zeta|^{\frac{m+1}{2}}, & \text{if}\ |r|\vee|\zeta|< 1.
  \end{array}\right.
\end{equation} 

\item \label{as:ic}
The initial condition $\xi$ is an $\mathcal{F}_0$-measurable $L_{m+1}(\bT^d)$-valued random variable such that $\E \| \xi \|_{L_{m+1}(\bT^d)}^{m+1}< \infty$.

\end{enumerate}
\end{assumption}

\begin{assumption}            \label{as:sigma}
For $i \in \{1,...,d\}$ we consider functions $G^i : \bT^d \times \bR \to \bR$, and $\sigma^i= (\sigma^{ik})_{k=1}^\infty: \bT^d \times \bR \to l_2$ such that for all $l \in \{1,...,d\}$, $q \in \{1,2\}$, and all multi-indices $\gamma \in ({\bN \cup\{0\}})^d$ with $q+|\gamma| \leq 3$,  the derivatives $\D_rG^i,\D_{x_l} G^i, \D_{rx_l}G^i$, $\D_r^q \D^\gamma_x \sigma^i$ exist and are continuous on $\bT^d \times \bR$. Moreover, there exist $ \bar \kappa \in ((m \wedge 2)^{-1}, 1] $, $\beta \in ( (2 \bar \kappa )^{-1}, 1]$, $\tilde{\beta} \in (0,1)$, and a constant $N_0 \in \bR $ such that for all $i,l \in \{1,...,d\}$,  $r \in \bR$ we have: 
\begin{equs}
\label{eq:bounded-sigma_r}
&\sup_r \| \sigma^i_r(\cdot, r)\|_{W^2_\infty(\bT^d; l_2)}+ [ \sigma^j_{x_j}(\cdot, 0) ]_{C^{\bar \kappa}( \bT^d, l_2)}\leq N_0,
\\             \label{eq:regularity-sigma-2}
&  \sup_x \left( [ \sigma_r(x, \cdot)]_{C^\beta(\bR;l_2)}+\| \sigma^i_{rx_l}(x, \cdot)\|_{W^1_\infty(\bR;l_2)}\right) \leq N_0,
\\                                \label{eq:sigma-x-sigma-xr}
& \|\D_r (\sigma^{jk}_{x_j} \sigma^{ik}_{rx_l}) \|_{L_\infty} \leq N_0,
\\                         \label{eq:Holder-b}
&\sup_{x} \|G^i_r(x, \cdot)\|_{C^{\beta}(\bR)}+ \sup_{x} \| \D_r(\sigma^{ik}_r \sigma^{jk}_{x_j})\|_{C^{\beta}(\bR)}\leq N_0, 
\\
\label{eq:Holder-bx-x}
&[G^i_{x_l}(\cdot, r)]_{C^{\tilde{\beta}}(\bT^d)}+[\D_{x_l}(\sigma^{ik}_r(\cdot, r)\sigma^{jk}_{x_j}(\cdot,r))]_{C^{\tilde{\beta}}(\bT^d)}  \leq N_0(1+|r|),
\\
&  \|\D_{x_l}\D_r(\sigma^{ik}_r \sigma^{jk}_{x_j})\|_{L_\infty} + \|G^i_{x_lr}\|_{L_\infty} \leq N_0.                        \label{eq:Lip-b}
\end{equs}

\end{assumption}

\begin{remark}
By Assumption \ref{as:sigma}, it follows that there exists a constant $N_1$ such that, for all $r \in \bR$
\begin{equs}
\label{eq:F-linear-growth}
&\sup_{x}|G^i(x,r)|+\sup_{x}|(\sigma^{ik}_r(x, r)\sigma^{jk}_{x_j}(x,r)| \leq N_1(1+|r|),
\\
\label{eq:b-linear-growth}
&\sup_x|\D_{x_l}(\sigma^{ik}_r(x, r)\sigma^{jk}_{x_j}(x,r))| +\sup_x|G^i_{x_l}(x, r)| \leq N_1(1+|r|),
\\                    
                     \label{eq:sigma-growth}
& \sup_ x | \sigma^j_{x_j}(x, r)|_{l_2}\leq N_1(1+|r|),
\\    \label{eq:regularity-sigma-5}
& [ \sigma^j_{x_j}(\cdot, r) ]_{C^{\bar \kappa}( \bT^d, l_2)} \leq N_1(1+|r|).
\end{equs}
\end{remark}

We now motivate the concept of entropy solutions. Suppose that we approximate equation \eqref{eq:main-equation} with a viscous equation, that is, in place of $\Phi(u)$ we have $\Phi(u)+\eps u$ for $\eps>0$. 
Let us choose a non-negative $\phi \in C^\infty_c([0,T) \times \bT^d)$ and a convex $\eta \in C^2(\bR)$. If $u(=u^\eps)$ solves the viscous version of \eqref{eq:main-equation}, by It\^o's formula we have (formally)
\begin{equs}
d \int_{\bT^d} \phi \eta(u) \, dx =&\int_{\bR^d} \left(  \phi_t \eta(u)   - \eta'(u)\Phi'(u) u_{x_i}  \phi_{x_i} -  \phi_{x_i} \eta'(u) a^{ij}(u) u_{x_j} - \phi_{x_i} \eta'(u) b^i(u) \right) \, dx dt 
\\
+& \int_{\bT^d} \phi\eta'(u)( f^i_r(u)u_{x_i}+  f^i_{x_i}(u) )\, dxdt  
\\
+&\int_{\bT^d} \eps \eta(u) \Delta \phi - \eps \phi \eta''(u) | \nabla u|^2 \, dx dt 
\\
 -&  \int_{\bT^d}\phi \eta''(u) \left(| \nabla [\fra](u)|^2 + a^{ij}(u) u_{x_i} u_{x_j} +   u_{x_i} b^i(u)\right) \, dxdt 
 \\
 + &\int_{\bT^d} \frac{1}{2}\phi \eta''(u) \left( 2a^{ij}(u)  u_{x_i} u_{x_j} +  2b^i(u)  u_{x_i}  + \sum_k | \sigma^{ik}_{x_i}(u) |^2 \right)\,  dxdt
 \\          \label{eq:primitive-entropy}
 +& \int_{\bT^d}\phi \eta'(u) \D_{x_i} \sigma^{ik}(u) \, dx d\beta^k(t).
\end{equs}
By integration by parts and the cancellations we have 
\begin{equs}
d \int_{\bT^d} \eta(u) \phi \, dx=&\int_{\bT^d}\left(   \eta(u)\phi_t   + [\fra^2\eta'] (u) \Delta \phi +[ a^{ij}\eta'](u)  \phi_{x_ix_j} \right)  \,dx dt 
\\
 +&\int_{\bT^d}\left(  [(a^{ij}_{x_j}-f^i_r )\eta' ](u) -\eta'(u) b^i(u)\right)  \phi_{x_i}   \,dx dt 
\\
+&\int_{\bT^d}\left( \eta'(u)f^i_{x_i}(u)-[f^i_{rx_i} \eta' ](u) \right)  \phi   \,dx dt 
\\
+&\int_{\bT^d} \eps \eta(u) \Delta \phi - \eps \phi \eta''(u) | \nabla u|^2 \, dx dt 
\\
 +&\int_{\bT^d}\left(  \frac{1}{2} \eta''(u)\sum_k | \sigma^{ik}_{x_i}(u) |^2\phi-  \eta''(u) | \nabla [ \fra ](u)|^2\phi  \right) \,dx dt 
\\                  
 +&\int_{\bT^d}\left(\eta'(u)\phi \sigma^{ik}_{x_i}(u)-  [\sigma^{ik}_{rx_i}\eta'](u) \phi - [\sigma^{ik}_r \eta'](u)  \phi_{x_i} \right)  \,dx d\beta^k(t).       
 \\           \label{eq:befor-epsilon-to-zero}
\end{equs}
Now we want to pass to the limit $\eps \downarrow 0$. Assuming for the moment that $u^\eps$ converges to some $u$ as $\eps \downarrow 0$ we may expect that
\begin{equs}
\int_0^T \int_{\bT^d} \eps \eta(u^\eps) \Delta \phi\, dx dt \to 0.
\end{equs}
In contrast, this may not be valid for the term 
\begin{equs}
I_\eps:=-\int_0^T \int_{\bT^d}\eps \phi \eta''(u) | \nabla u|^2 \, dx dt,
\end{equs}
since, in general, $\|\nabla u^\eps\|_{L_2(Q_T)}^2 \sim \eps^{-1}$. However, since $I_\eps \leq 0$, one may drop the term $I_\eps$ from the right hand side of \eqref{eq:befor-epsilon-to-zero}, replace the equality with an inequality, and then pass to the limit $\eps \downarrow 0$. This motivates the following definition.

\begin{definition}        \label{def:entropy-solution}
An entropy solution of \eqref{eq:main-equation} is 
a predictable stochastic process $u : \Omega_T \to L_{m+1}(\bT^d)$ such that
\begin{enumerate}[(i)]

\item  \label{item:in-Lm} $u \in  L_{m+1}(\Omega_T; L_{m+1}(\bT^d))$ 

\item \label{item:chain_ruleW2} For all $f \in C_b(\bR)$ we have $[\fra f](u) \in L_2(\Omega_T;W^1_2(\bT^d)$) and 
\begin{equation*}      
\D_{x_i}[ \fra f](u)= f(u) \D_{x_i} [\fra](u).
\end{equation*}

\item \label{item:entropies}For all convex $\eta\in C^2(\bR)$ with $\eta''$ compactly supported and all  $\phi\geq 0$ of the form $\phi= \varphi \varrho$ with $\varphi \in C^\infty_c([0,T))$, $\varrho \in C^\infty(\bT^d)$,   we have 
almost surely
\begin{equs}                    
-&\int_0^T \int_{\bT^d}  \eta(u)\phi_t\, dx dt 
\\
 \leq & \int_{\bT^d} \eta(\xi) \phi(0) \, dx 
+ \int_0^T \int_{\bT^d}  \left([\fra^2\eta'] (u)  \Delta \phi + [ a^{ij}\eta'](u) \phi_{x_ix_j} \right)  \, dx  dt 
\\
 +&\int_0^T\int_{\bT^d}\left(  [(a^{ij}_{x_j}-f^i_r) \eta' ](u) -\eta'(u) b^i(u)\right)  \phi_{x_i}   \,dx dt 
\\
+&\int_0^T\int_{\bT^d}\left( \eta'(u)f^i_{x_i}(u)-[f^i_{rx_i} \eta' ](u) \right)  \phi   \,dx dt 
\\
  + & \int_0^T \int_{\bT^d} \left(  \frac{1}{2} \eta''(u)\sum_k | \sigma^{ik}_{x_i}(u) |^2\phi-  \eta''(u) | \nabla [\fra](u)|^2\phi  \right) \, dx  dt 
\\        
  + &\int_0^T \int_{\bT^d}\left(\eta'(u)\phi \sigma^{ik}_{x_i}(u)-  [\sigma^{ik}_{rx_i}\eta' ](u) \phi - [\sigma^{ik}_r\eta' ](u)  \phi_{x_i} \right)  \,dx d\beta^k(t). 
 \\
\label{eq:entropy-inequality}
\end{equs}
\end{enumerate}
\end{definition}

\begin{remark}
  In \cite{FG18} a notion of pathwise kinetic solutions to \eqref{eq:equation-stratonovich} has been introduced. It is expected, although not immediate to prove, that in the regime where both approaches apply, pathwise kinetic solutions and entropy solutions in the sense of Definition \ref{def:entropy-solution} coincide. The difficulty in validating this lies in the identification of the stochastic integral in \cite{FG18}. In fact, in \cite{FG18} no meaning is given to the stochastic integral itself, but solutions are obtained as limits of smooth approximations of the noise. As a consequence, the identification of the two concepts would require the proof of a Wong-Zakai approximation result on the approximative level \cite[equation (5.1)]{FG18}.
\end{remark}

\begin{theorem}         \label{thm:main-theorem}
Let $\Phi$, $\xi$ satisfy  Assumptions \ref{as:A} and $\sigma, G$ satisfy Assumption \ref{as:sigma}. Then, there exists a unique entropy solution of equation  \eqref{eq:main-equation} with initial condition $\xi$. Moreover, if $\tu$ is the unique entropy solution of equation \eqref{eq:main-equation} with initial condition $\txi$, then 
\begin{equs}           \label{eq:main contraction}
\esssup_{t \leq T} \E \| u(t)-\tu(t)\|_{L_1(\bT^d)} \leq N \E \| \xi-\txi \|_{L_1(\bT^d)},
\end{equs}
where $N$ is a constant depending only on $N_0$, $N_1$,  $d$ and $T$.
\end{theorem}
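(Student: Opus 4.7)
The plan is to reduce Theorem \ref{thm:main-theorem} to two ingredients already set up by the paper: the $L_1$-contraction between entropy solutions (developed in Section 4) and the existence/uniqueness for non-degenerate equations (Section 5), and to conclude by a vanishing-viscosity limit in the nonlinearity $\Phi$ using the $\Phi$-stability (Theorem \ref{thm:uniqueness}).

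\textbf{Step 1: uniqueness and contraction.} Let $u, \tu$ be two entropy solutions with initial data $\xi, \txi$. I would apply the entropy inequality \eqref{eq:entropy-inequality} for $u$ with a test function $\phi(t,x,s,y)$ and for $\tu$ with a test function $\phi(t,x,s,y)$ in the second pair of variables, and a convex entropy $\eta_\delta$ smoothly approximating $r \mapsto |r|$. Doubling the variables, one adds the two resulting (in)equalities and lets the test function concentrate on the diagonal $\{t=s,\,x=y\}$ via $\rho_\theta(t-s)\rho_\theta^{\otimes d}(x-y)$. The deterministic first-order terms cancel as in the usual Kru\v{z}hkov argument; the second-order parabolic terms are controlled using the chain rule property (\ref{item:chain_ruleW2}) for $[\fra]$, producing a non-positive quadratic form in $\nabla[\fra](u) - \nabla[\fra](\tu)$ after integration by parts. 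The genuinely new difficulty is the stochastic/cross-variation terms: the Itô correction from $d\langle u,\tu\rangle$ contains the bilinear form $\sigma^{ik}_r(u)\sigma^{jk}_r(\tu)$ and must be matched against the martingale contributions and the $b^i,f^i$ drifts. This is handled using the splitting $f^i = G^i - \tfrac12 b^i$ that is explicitly built into \eqref{eq:main-equation}, together with the H\"older and $W^2_\infty$ bounds \eqref{eq:bounded-sigma_r}--\eqref{eq:Lip-b}, so that all error terms are estimated by $\E\|u-\tu\|_{L_1}$ (plus vanishing quantities in $\theta$). Taking expectation, sending $\theta \downarrow 0$, $\delta \downarrow 0$ and applying Gronwall yields \eqref{eq:main contraction}.

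\textbf{Step 2: existence via vanishing viscosity in $\Phi$.} For $\varepsilon>0$ set $\Phi^\varepsilon(r) := \Phi(r) + \varepsilon r$. Then $\Phi^\varepsilon$ is non-degenerate and still satisfies Assumption \ref{as:A} (possibly with an enlarged constant uniform in $\varepsilon$ on bounded sets of $r$, independent of $\varepsilon$ on the structural bound \eqref{eq:as Psi}). The non-degenerate existence/uniqueness from Section 5 then produces, for each $\varepsilon$, a unique entropy solution $u^\varepsilon$ of $\Pi(\Phi^\varepsilon,\xi)$. The $\Phi$-stability estimate of Theorem \ref{thm:uniqueness} gives
\begin{equation*}
\esssup_{t\leq T}\E\|u^\varepsilon(t)-u^{\varepsilon'}(t)\|_{L_1(\bT^d)} \;\leq\; \omega(\varepsilon,\varepsilon')
\end{equation*}
for a modulus $\omega$ controlled by a distance between $\Phi^\varepsilon$ and $\Phi^{\varepsilon'}$, which tends to $0$ as $\varepsilon,\varepsilon'\downarrow 0$. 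Thus $(u^\varepsilon)$ is Cauchy in $L_\infty([0,T];L_1(\Omega;L_1(\bT^d)))$ and converges to some predictable process $u$. The a priori bound in $L_{m+1}(\Omega_T;L_{m+1}(\bT^d))$ that is inherent to the entropy formulation (tested against $\eta(r)\sim|r|^{m+1}$) is uniform in $\varepsilon$ and gives \ref{item:in-Lm}; the uniform bound on $[\fra^\varepsilon](u^\varepsilon)$ in $L_2(\Omega_T;W^1_2)$ coming from the dissipation term in \eqref{eq:entropy-inequality} passes to $[\fra](u)$ and yields the chain-rule property \ref{item:chain_ruleW2}.

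\textbf{Step 3: passage to the limit in the entropy inequality.} Writing \eqref{eq:entropy-inequality} for $u^\varepsilon$ with $\Phi^\varepsilon$, I would split the right-hand side into the $\Phi$-part plus an $\varepsilon$-perturbation. The additional viscous contributions are $\int \varepsilon\eta(u^\varepsilon)\Delta\phi$ (which vanishes as $\varepsilon\downarrow 0$) and $-\int\varepsilon\phi\eta''(u^\varepsilon)|\nabla u^\varepsilon|^2$ (which is $\leq 0$ and may therefore be dropped before taking the limit, as motivated in the paragraph preceding Definition \ref{def:entropy-solution}). All other terms are continuous in $u^\varepsilon$ in the strong $L_1$-topology (using the growth bounds \eqref{eq:F-linear-growth}--\eqref{eq:regularity-sigma-5} and uniform moment bounds in $L_{m+1}$), and for the stochastic integral Burkholder--Davis--Gundy plus the $L_1$-convergence gives convergence along a subsequence a.s.\ after suitable localization. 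The limit process $u$ then satisfies \eqref{eq:entropy-inequality}, hence is an entropy solution, and uniqueness from Step 1 identifies the full limit. The contraction \eqref{eq:main contraction} is already established in Step 1.

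\textbf{Main obstacle.} The bulk of the work is Step 1: executing the doubling-of-variables for stochastic equations of porous-medium type with \emph{nonlinear} gradient noise, in particular the bookkeeping of the Itô cross-variation and the errors produced when the regularising kernels $\rho_\theta$ meet the non-Lipschitz entropy and the merely H\"older-continuous $\sigma^k_r$. The delicate cancellations between $b^i/2$, the It\^o correction, and the martingale terms that \eqref{eq:main-equation} was written to exhibit are what make this step work under the relaxed regularity of Assumption \ref{as:sigma}.
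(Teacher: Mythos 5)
Your overall architecture (stability of $\Pi(\Phi,\xi)$ in $\Phi$, existence for non-degenerate approximations, passage to the limit dropping the sign-definite viscous term) matches the paper's, but Step 1 contains a genuine gap that the paper's entire Section 3--4 is designed to close. You claim that doubling the variables between two \emph{arbitrary} entropy solutions yields the $L_1$-contraction, with ``all error terms estimated by $\E\|u-\tu\|_{L_1}$ plus vanishing quantities in $\theta$.'' The obstruction is the stochastic integral term: after writing the entropy inequality for $u(t,x)$ against $\eta_\delta(\cdot-a)$ and substituting $a=\tu(s,y)$ (and vice versa), one of the two resulting terms, $F^2_\theta(t,x)$ in the paper's notation, is a stochastic integral over $s\in[t-\theta,t]$ into which the \emph{anticipating} quantity $a=u(t,x)$ is substituted. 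Its expectation does not vanish and does not tend to zero with $\theta$; instead it converges to the nontrivial cross-variation functional $\mathcal{B}(u,\tu,\theta)$, whose cancellation against the It\^o corrections is exactly what must be engineered. The paper handles this by introducing the $(\star)$-property (Definition \ref{def:star}), proving the contraction only when \emph{one} of the two solutions has it (Theorem \ref{thm:uniqueness}), verifying the property for the viscous $L_2$-solutions (Lemma \ref{lem:strong entropy}) and transporting it to the limit (Corollary \ref{cor}). Consequently the paper never proves uniqueness among all entropy solutions by your Step 1; it proves that every entropy solution coincides with the constructed one, and \eqref{eq:main contraction} is obtained by comparing $\tu$ against the approximations $u_{(n)}$, which possess the $(\star)$-property, and passing to the limit. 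Without supplying an argument for why the anticipative stochastic terms are controlled, your Step 1 does not go through.

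Two secondary points. First, the regularisation $\Phi^\eps(r)=\Phi(r)+\eps r$ is not what the paper uses and is not obviously admissible: the $L_2$-theory of Section 5 (Galerkin scheme, It\^o formula, the a priori bounds \eqref{eq:change1}--\eqref{eq:change2}) requires $\Phi_n\in C^\infty$ with \emph{bounded} derivatives and $\fra_n\geq 2/n$, which is why Proposition \ref{prop:Phi-n} constructs a bespoke $\Phi_n$ with $\sup_{|r|\le n}|\fra-\fra_n|\le 4/n$; the quantity $R_\lambda$ in \eqref{eq:super inequality} is then controlled via $R_{8/n}\ge n$, and the Cauchy property of $(u_n)$ additionally uses the truncation $\xi_n=(-n)\vee(\xi\wedge n)$ and the uniform integrability of $|u_n|^m$ from \eqref{eq:uniform-m+1}. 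You would need to check the analogous quantitative facts for $\Phi+\eps r$. Second, the $(\star)$-property for the approximations is only proved uniformly under the auxiliary assumption $\E\|\xi\|_{L_2}^4<\infty$, which forces the paper's two-step structure (prove existence under the extra moment condition, then remove it by a second approximation of $\xi$ using part (i) of Theorem \ref{thm:uniqueness}); your proposal omits this bootstrapping entirely.
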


\section{Auxiliary results}
In this section we state and we prove some tools that will be used for the proofs of the main theorem. We begin with two remarks.
\begin{remark}                  \label{rem:intergation-limits}
For any functions $f : \bR \times \bT^d \to \bR$, $u: \bT^d \to \bR$, $\phi : \bT^d \to \bR$ (that are regular enough for the following expressions to make sense) and any $a \in \bR$ we have 
\begin{equs}
&\int_{\bT^d} \D_{x_i} \phi(x) \int_0^{u(x)} f(r,x) \,ds dx- \int_{\bT^d} \phi(x) \int_0^{u(x)} \D_{x_i} f(r,x) \, ds dx 
\\
=&\int_{\bT^d} \D_{x_i} \phi(x) \int_a^{u(x)} f(r,x) \,ds dx- \int_{\bT^d} \phi(x) \int_a^{u(x)} \D_{x_i} f(r,x) \, ds dx.
\end{equs}
\end{remark}

\begin{remark}
For any $f \in L_1(0,T)$ and $\theta \in (0, T)$ we have 

\begin{equ}  \label{eq:whole-theta}          
\int_\theta^T\int_{t-\theta}^t|f(s)|\,ds\,dt\leq\theta\int_{0}^T|f(s)|\,ds.
\end{equ}
\end{remark}

\begin{lemma}              \label{lem:initial-condition}
Let $u$ be an entropy solution  \eqref{eq:main-equation}. Then we have that
$$
\lim_{h \to 0}\frac{1}{h} \E \int_0^h \int_{\bT^d} |u(t,x)- \xi(x)|^2 \, dx dt =0.
$$
\end{lemma}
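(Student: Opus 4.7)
The plan is to expand $|u-\xi|^2 = u^2 - 2u\xi + \xi^2$ and to establish separately the two one-sided estimates
\begin{equs}
\text{(I)}&\qquad \limsup_{h\downarrow 0}\frac{1}{h}\E\int_0^h\int_{\bT^d} u^2\,dx\,dt\leq \E\int_{\bT^d}\xi^2\,dx,\\
\text{(II)}&\qquad \lim_{h\downarrow 0}\frac{1}{h}\E\int_0^h\int_{\bT^d} u\xi\,dx\,dt= \E\int_{\bT^d}\xi^2\,dx.
\end{equs}
Since $|u-\xi|^2\geq 0$, combining (I) and (II) immediately yields the claim.

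For (I), I would test the entropy inequality \eqref{eq:entropy-inequality} against the spatially constant function $\phi(t,x)=\varphi_h(t)$, where $\varphi_h\in C^\infty_c([0,T))$ is a smooth approximation of the Lipschitz cutoff $(1-t/h)_+$ with $\varphi_h(0)=1$, combined with a convex, even entropy $\eta_M\in C^2(\bR)$ chosen so that $\eta_M''\geq 0$ is compactly supported, $\eta_M''\leq 2$, and $\eta_M(r)=r^2$ for $|r|\leq M$. These choices entail $\eta_M\leq r^2$, $|\eta_M'(r)|\leq 2|r|$, and $\eta_M(r)\nearrow r^2$ pointwise as $M\to\infty$. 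Since all spatial derivatives of $\phi$ vanish, only the bulk and stochastic terms survive. Using $|\eta_M'|\leq 2|r|$, $|\eta_M''|\leq 2$ together with the growth assumptions \eqref{eq:F-linear-growth}--\eqref{eq:sigma-growth} and $\|f^i_{rx_i}\|_{L_\infty}\leq N_0$, the bulk integrand is dominated by $C(1+u^2)$ with $C$ independent of $M$, and the non-positive term $-\eta_M''(u)|\nabla[\fra](u)|^2\varphi_h$ is dropped. For each fixed $M$ the stochastic integrand lies in $L_2(\Omega_T;l_2)$ (since $|\eta_M'|$ is bounded and $u\in L_{m+1}\subset L_2$ because $m+1>2$), so it vanishes upon taking expectation. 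The resulting inequality
\begin{equation*}
\frac{1}{h}\E\int_0^h\int_{\bT^d}\eta_M(u)\,dx\,dt\leq \E\int_{\bT^d}\eta_M(\xi)\,dx + C\int_0^h\E\int_{\bT^d}(1+u^2)\,dx\,dt
\end{equation*}
has remainder $o_h(1)$ by absolute continuity of the Lebesgue integral. Dominated convergence in $M$, with majorants $u^2$ and $\xi^2$, then delivers (I).

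For (II), I exploit that both $\eta(r)=r$ and $\eta(r)=-r$ are convex with vanishing second derivative, so the entropy inequality becomes an equality for any linear $\eta$. Applying it with $\phi(t,x)=\varphi_h(t)\varrho(x)$ for deterministic $\varrho\in C^\infty(\bT^d)$, the linear growth of the coefficients together with the chain-rule property of Definition~\ref{def:entropy-solution}\,(ii) (needed to handle $[\fra^2](u)\Delta\varrho=\Phi(u)\Delta\varrho$ via integration by parts) ensures that all terms are integrable over $\Omega_T\times\bT^d$ and that the stochastic integral is a zero-mean martingale. Passing $h\downarrow 0$ gives, for each deterministic $\varrho\in C^\infty(\bT^d)$,
\begin{equation*}
\lim_{h\downarrow 0}\frac{1}{h}\E\int_0^h\int_{\bT^d} u\varrho\,dx\,dt=\E\int_{\bT^d}\xi\varrho\,dx.
\end{equation*}
To upgrade this to $\varrho=\xi$, which is random, I approximate $\xi$ in $L_2(\Omega;L_2(\bT^d))$ by $\xi^K:=\sum_{n=1}^K a_n\varrho_n$, where $\{\varrho_n\}\subset C^\infty(\bT^d)$ is a complete orthonormal system in $L_2(\bT^d)$ and $a_n:=\langle\xi,\varrho_n\rangle_{L_2(\bT^d)}$ is $\cF_0$-measurable. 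Linearity of the entropy equality in $\varrho$ propagates the a.s.\ validity from each $\varrho_n$ to the finite combination $\xi^K$. Taking expectation, each stochastic term $\E\bigl[a_n\int_0^T\!\int(\cdots)\varrho_n\varphi_h\,dx\,d\beta^k\bigr]$ vanishes by the tower property (the inner It\^o integral is a zero-mean martingale and $a_n$ is $\cF_0$-measurable). Hence
\begin{equation*}
\lim_{h\downarrow 0}\frac{1}{h}\E\int_0^h\int_{\bT^d}u\xi^K\,dx\,dt=\E\int_{\bT^d}\xi\xi^K\,dx\xrightarrow{K\to\infty}\E\int_{\bT^d}\xi^2\,dx,
\end{equation*}
and the interchange of the limits in $h$ and $K$ is justified by the Cauchy--Schwarz bound
\begin{equation*}
\Big|\tfrac{1}{h}\E\int_0^h\!\int u(\xi^K-\xi)\,dx\,dt\Big|\leq \Big(\tfrac{1}{h}\E\int_0^h\!\int u^2\,dx\,dt\Big)^{1/2}\big(\E\|\xi^K-\xi\|^2_{L_2(\bT^d)}\big)^{1/2},
\end{equation*}
whose first factor is bounded uniformly in small $h$ by~(I).

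The main obstacle lies in (II): legalizing the substitution of the $\cF_0$-measurable random function $\xi^K$ into the entropy equality, originally stated only for deterministic test functions, and verifying that the resulting It\^o integral remains mean-zero. This hinges on the linearity of the equality in $\varrho$ and on the $\cF_0$-measurability of the coefficients $a_n$, exploited via the tower property of conditional expectation. A secondary technicality is the smoothing of the Lipschitz cutoff $(1-t/h)_+$ so as to fit the $C^\infty_c([0,T))$ framework of Definition~\ref{def:entropy-solution}, which is handled by standard approximation together with the uniform bounds already established.
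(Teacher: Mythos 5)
Your argument is correct, but it takes a genuinely different route from the paper's. The paper does not expand the square: it doubles the spatial variable, bounding $|u(t,x)-\xi(x)|^2$ by $|u(t,x)-\xi(y)|^2\varrho_\eps(x-y)$ plus $|\xi(x)-\xi(y)|^2\varrho_\eps(x-y)$, applies the entropy inequality with the Kruzhkov-type entropies $\eta_\delta(\cdot-a)$ and the test function $\gamma(t)\varrho_\eps(x-y)$, substitutes the $\cF_0$-measurable value $a=\xi(y)$ (so the stochastic integral still vanishes in expectation), sends $\delta\to0$ to recover the square, then $h\to0$, and finally $\eps\to0$ using continuity of translations in $L_2(\T^d)$. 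Your proof instead splits the claim into an energy bound at $t=0+$ (quadratic entropies $\eta_M\nearrow r^2$, spatially constant test function) and a weak continuity statement (linear entropies, for which the inequality is an equality), and then upgrades the latter from deterministic $\varrho$ to the random $\xi$ via an orthonormal expansion with $\cF_0$-measurable coefficients. Both proofs hinge on the same core point --- inserting an $\cF_0$-measurable object into a formulation stated for deterministic data while keeping the It\^o integral mean-zero --- but realize it differently (pointwise substitution $a=\xi(y)$ versus the tower property applied to $a_n M_T^{(n)}$). What each buys: the paper's version is one pass of the same doubling machinery used later for uniqueness and needs only the Kruzhkov entropies; yours is more modular and avoids the spatial mollifier and the final $\eps\to0$ step, at the price of the expansion argument and the $h$--$K$ limit interchange, which you correctly control by the uniform-in-$h$ bound from step (I).

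One point deserves care in step (I): you need simultaneously $\varphi_h(0)=1$ and $-\varphi_h'\geq h^{-1}I_{[0,h]}$, which is only achieved exactly by the non-smooth ramp $(1-t/h)_+$; a single smooth $\varphi_h$ necessarily loses a constant, and any loss would destroy the sharp constant $1$ that (I) requires (note the paper's $\gamma$ deliberately takes $\gamma(0)=2$, which is harmless there but would be fatal here). So you must really approximate the ramp in $W^1_1(0,T)$ and pass to the limit using $\E\int_0^T\int\eta_M(u)<\infty$, as you indicate; with that done, the argument goes through.
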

\begin{proof}
For $\varrho_{\varepsilon} := \rho_\eps^{\otimes d}$, 
we have 
\begin{align}               \label{eq:spliting}
\nonumber
\frac{1}{h}\E \int_0^h \int_{x}  |u(t,x)-\xi(x)|^2  \,  dt 
&\leq 2 \E  \int_{x,y} |\xi(y)-\xi(x)|^2 \varrho_\varepsilon(x-y) 
\\
&+ \frac{2}{h}\E \int_0^h \int_{x,y} |u(t,x)-\xi(y)|^2 \varrho_\varepsilon(x-y)\,  dt .
\end{align}
 We first estimate the second term on the right hand side for $h \in[0,T]$. 
 Take a decreasing, non-negative function $\gamma\in C^\infty([0,T])$, such that 
\begin{equ}
\gamma(0)=2,\quad  \gamma\leq 2I_{[0,2h]},\quad\partial_t\gamma\leq -\frac{1}{h}I_{[0,h]}.
\end{equ}
Take furthermore for each $\delta>0$, $\eta_\delta\in C^2(\bR)$ defined by
\begin{equ}
\eta_\delta(0)=\eta_\delta^\prime(0)=0,\quad \eta_\delta''(r)=2I_{[0,\delta^{-1})}(|r|)+(-|r|+\delta^{-1}+2)I_{[\delta^{-1},\delta^{-1}+2)}(|r|),
\end{equ}
and notice that $\eta_\delta(r) \to r^2$ as $\delta \to 0$. 
Let $y\in\T^d$ and $a \in \bR$. Then, using the entropy inequality \eqref{eq:entropy-inequality} with $\phi(t,x)= \gamma(t) \varrho_\varepsilon(x-y)$, $\eta (r)= \eta_\delta(r-a)$, we obtain 
\begin{equs}            
&-\int_{t,x} \eta_\delta(u-a) \D_t \gamma(t) \varrho_\varepsilon(x-y)
\\
&\leq 2\int_{x}\eta_\delta(\xi-a)\varrho_\varepsilon(x-y) 
\\ 
& +N\int_{t,x} (1+|u|^{m+1}+|a|^{m+1}) \left( \sum_{ij}|\D_{x_ix_j}\varrho_\eps(x-y)| +\sum_{i}|\D_{x_i}\varrho_\eps(x-y)|+\varrho_\eps(x-y)\right) \gamma(t) 
\\  
&+\frac{1}{2}\int_{t,x} \eta_\delta^{\prime\prime}(u-a)\sum_k | \sigma^{ik}_{x_i}(x,u) |^2 \varrho_\varepsilon(x-y)\gamma(t)
\\   
&+ \int_0^T \int_x\left(\eta^{\prime}_\delta(u-a)\phi  \sigma^{ik}_{x_i}(u)-  [\sigma^{ik}_{rx_i}\eta^{\prime}_\delta(\cdot-a)](u) \phi - [\sigma^{ik}_r\eta^{\prime}_\delta(\cdot-a)](u)  \phi_{x_i} \right)   \, d\beta^k(t),
\end{equs}
where for the second term on the right hand side we have used \eqref{eq:as fra}, \eqref{eq:Holder-b}, \eqref{eq:b-linear-growth}, \eqref{eq:Lip-b},  \eqref{eq:bounded-sigma_r}, and \eqref{eq:sigma-growth}.
Notice that all the terms are continuous in $a \in \bR$.
Upon substituting $a=\xi(y)$ taking expectations, integrating over $y \in \bT^d$, and using the bounds on $\gamma$, one gets
\begin{equs}              
&\frac{1}{h}\int_0^h \E \int_{x,y} \eta_\delta(u(t,x)-\xi(y))  \varrho_\varepsilon(x-y)\,dt
\\
& 
\leq 2\E \int_{x,y}\eta_\delta(\xi(x)-\xi(y))\varrho_\varepsilon(x-y) 
\\ 
& +\frac{N}{\varepsilon^2}\E \int_0^{2h}\int_{x} (1+|u(t,x)|^{m+1}+|\xi(x)|^{m+1}) \,dt
\\
&+\E \int_0^{2h} \int_{x,y} \eta_\delta^{\prime\prime}(u(t,x)-\xi(y))\sum_k |  \sigma^{ik}_{x_i}(u(t,x)) |^2 \varrho_\varepsilon(x-y)\, dt.
\end{equs}
In the  limit $\delta\to0$ this yields
\begin{equs}          
\frac{1}{h}\E \int_0^h\int_{x,y} |u(t,x)-\xi(y)|^2  \varrho_\varepsilon(x-y)\,dt 
&\leq 2\E \int_{x,y}|\xi(x)-\xi(y)|^2\varrho_\varepsilon(x-y) \, dx
\\
& +\frac{N}{\varepsilon^2}\E \int_0^{2h}\int_{x} (1+|u(t,x)|^{m+1}+|\xi(x)|^{m+1})\, dt
\\
&+2\E \int_0^{2h} \int_{x,y}\sum_k | \sigma^{ik}_{x_i}(x,u(t,x)) |^2 \varrho_\varepsilon(x-y)\, dt,
\end{equs}
which implies that 
\begin{equs}
\limsup_{h \to 0}\frac{1}{h}\E &\int_0^h\int_{x,y} |u(t,x)-\xi(y)|^2  \varrho_\varepsilon(x-y)\,dt 
\leq 2\E \int_{x,y}|\xi(x)-\xi(y)|^2\varrho_\varepsilon(x-y).
\end{equs}
Consequently, by \eqref{eq:spliting} we get
\begin{align*}
&\limsup_{h \to 0}\frac{1}{h}\E \int_0^h\int_{x} |u(t,x)-\xi(x)|^2  \,dt 
\leq 3\E \int_{x,y}|\xi(x)-\xi(y)|^2\varrho_\varepsilon(x-y),
\end{align*}
from which the claim follows, since right hand side goes to $0$ as $\varepsilon \to 0$ due to the continuity of translations in $L_2(\bT^d)$.
\end{proof}
The proof of the following lemma can be found in \cite[Lemma 3.1]{DareiotisGerencserGess}. 

\begin{lemma}\label{lem:frac reg}
Let Assumption \ref{as:A} hold, let $u\in L_1(\Omega\times Q_T)$ and for some $\eps\in (0,1)$, let $\varrho:\bR^d\mapsto\bR$ be a non-negative function integrating to one and supported on a ball of radius $\eps$.
Then one has the bound
\begin{equ}\label{eq:frac reg}
\E\int_{t,x,y}|u(t,x)-u(t,y)|\varrho(x-y)\leq N \eps^{\frac{2}{m+1}}\big(1+\E\|\nabla[\fra](u)\|_{L_1(Q_T)}\big),
\end{equ}
where $N$ depends on $d, K$ and $T$.
\end{lemma}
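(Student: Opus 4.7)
The plan is to split $|u(t,x)-u(t,y)|$ according to the dichotomy in Assumption \ref{as:A}\eqref{as:A first}, i.e.\ according to whether at least one of $u(t,x)$, $u(t,y)$ has modulus at least one. Writing $v := [\fra](u)$ and using \eqref{eq:as Psi}, I have pointwise
\begin{equs}
|u(t,x)-u(t,y)| \,\le\, K\,|v(t,x)-v(t,y)| \cdot I_{\{|u(t,x)|\vee|u(t,y)|\geq 1\}} \;+\; K^{\frac{2}{m+1}}\,|v(t,x)-v(t,y)|^{\frac{2}{m+1}} \cdot I_{\{|u(t,x)|\vee|u(t,y)|<1\}}.
\end{equs}
Thus the estimate reduces to controlling two mollified differences of $v$, one with exponent $1$ and one with exponent $2/(m+1)<1$.

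The workhorse for the exponent-$1$ term is the classical mollified-difference inequality: after the change of variables $z=x-y$ and using $v(t,x)-v(t,y)=\int_0^1 \nabla v(t,y+s z)\cdot z\,ds$, one gets
\begin{equs}
\int_{\bT^d\times\bT^d}|v(t,x)-v(t,y)|\,\varrho(x-y)\,dx\,dy \;\leq\; \eps\,\|\nabla v(t,\cdot)\|_{L_1(\bT^d)},
\end{equs}
since $\varrho$ is supported in a ball of radius $\eps$ and integrates to one. Taking expectation and integrating in time contributes at most $K\eps\,\E\|\nabla[\fra](u)\|_{L_1(Q_T)}$, which (using $\eps\le\eps^{2/(m+1)}$ for $\eps\le 1$) is already absorbed into the right-hand side of \eqref{eq:frac reg}.

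For the low-regime term I treat $\varrho(x-y)\,dx\,dy$ as a finite measure on $\bT^d\times\bT^d$ (of total mass $1$) and apply Jensen's inequality to the concave map $s\mapsto s^{2/(m+1)}$, obtaining for each fixed $t$
\begin{equs}
\int_{\bT^d\times\bT^d}|v(t,x)-v(t,y)|^{\frac{2}{m+1}}\,\varrho(x-y)\,dx\,dy \;\leq\; \Big(\eps\,\|\nabla v(t,\cdot)\|_{L_1(\bT^d)}\Big)^{\frac{2}{m+1}}.
\end{equs}
Integrating in $t$, applying H\"older in time (exponents $(m+1)/2$ and its conjugate, giving a factor of $T^{1-2/(m+1)}$), and then using Young's inequality $A^{2/(m+1)}\leq 1+A$ to linearise the right-hand side, yields
\begin{equs}
\E\int_0^T\int_{\bT^d\times\bT^d}|v(t,x)-v(t,y)|^{\frac{2}{m+1}}\,\varrho(x-y)\,dx\,dy\,dt \;\leq\; N\,\eps^{\frac{2}{m+1}}\Big(1+\E\|\nabla[\fra](u)\|_{L_1(Q_T)}\Big),
\end{equs}
with $N$ depending on $K$ and $T$. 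Summing the two contributions gives \eqref{eq:frac reg}. The main obstacle is the slightly delicate matching of exponents in the low-regime term, where one must combine Jensen's inequality in space with H\"older in time before using Young to produce the desired linear dependence on $\E\|\nabla[\fra](u)\|_{L_1(Q_T)}$; everything else is a straightforward change of variables.
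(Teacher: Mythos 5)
Your argument is correct and is essentially the standard proof of this estimate: the paper itself does not reprove the lemma but cites \cite[Lemma 3.1]{DareiotisGerencserGess}, where the same strategy is used — split according to the two regimes of \eqref{eq:as Psi}, bound the mollified difference of $[\fra](u)$ by $\eps\|\nabla[\fra](u)\|_{L_1}$, and handle the sublinear power $2/(m+1)$ via Jensen/H\"older before linearising with $A^{2/(m+1)}\le 1+A$. All exponent bookkeeping checks out (note $\eps\le\eps^{2/(m+1)}$ since $m>1$ and $\eps\le1$), so no changes are needed.
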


We now introduce the definition of the ($\star$)-property, an analog of of which was first introduced in \cite{FengNualart} in the context of stochastic conservation laws. It is somewhat technical  but important in order to obtain the uniqueness of entropy solutions. To be more precise, as a first step,  we will estimate the difference of two entropy solutions provided that one of them has the $(\star)$-property. In the construction of entropy solutions it will be verified that, given that the initial condition is sufficiently integrable in $\omega$, the constructed solutions indeed satisfy the $(\star)$-property (see Corollary 3.9 and Lemma 5.3 below). 

Let $h \in C^\infty(\bR)$ with $h' \in C_c^\infty(\bR)$, $\varrho \in C^\infty(\bT^d \times \bT^d)$, $\varphi \in C^\infty_c((0,T))$,
$\tu\in L_{m+1}(\Omega_T; L_{m+1}( \T^d))$,
and let $\sigma$ satisfy Assumption \ref{as:sigma}.
For $\theta>0$, we introduce  
\begin{equs}
 \phi_\theta(t,x,s,y):=\varrho(x,y) \rho_\theta(t-s) \varphi\left(\frac{t+s}{2}\right).
\end{equs}
We further define
\begin{equs}
F_\theta(t,x,a):= &\int_0^T \int_{y} h(\tu-a) \sigma^{ik}_{y_i}(y,\tu) \phi_\theta(t,x,s,y) \, d\beta^k(s)
\\
- &\int_0^T \int_{y} [\sigma^{ik}_{rx_i}h(\cdot-a)](y,\tu)  \phi_\theta(t,x,s,y)d\beta^k(s)
\\
-&\int_0^T \int_{y}[\sigma^{ik}_rh(\cdot-a)](y,\tu) \D_{y_i} \phi_\theta(t,x,s,y) \, d\beta^k(s)
\end{equs}
and 
\begin{equs}
\mathcal{B}(u,\tu, \theta)=&  -\E \int_{t,x,s,y}   \D_{y_ix_j}\phi_\theta \int_\tu^u \int_r^\tu h'(\tilde{r}-r) \sigma^{ik}_r(y,\tilde{r}) 
  \sigma^{jk}_r(x, r) \, d\tilde{r} d r
 \\
 &- \E \int_{t,x,s,y}   \D_{y_i}\phi_\theta \int_\tu^u \int_r^\tu h'(\tr-r) \sigma^{ik}_r(y,\tr)  \sigma^{jk}_{rx_j}(x,r) \,  d\tr d r 
 \\
 &+\E \int_{t,x,s,y}   \D_{y_i}\phi_\theta \int_u^\tu h'(\tr-u) \sigma^{ik}_r(y,\tr)  \sigma^{jk}_{x_j} (x,u)  \, d\tr  
 \\
 &-\E \int_{t,x,s,y}    \D_{x_j}\phi_\theta  \int_\tu^u \int_r^\tu h'(\tr-r) \sigma^{ik}_{ry_i}(y,\tr) \sigma^{jk}_r(x,r) \, d\tr dr 
 \\
 &-\E \int_{t,x,s,y}   \phi_\theta \int_\tu^u  \int_r^\tu h'(\tr-r) \sigma^{ik}_{ry_i}(y,\tr)    \sigma^{jk}_{rx_j}(x,r) \, d\tr dr
 \\
 &+ \E \int_{t,x,s,y}   \phi_\theta \int_u^\tu h'(\tr-u) \sigma^{ik}_{ry_i}(y,\tr)  \sigma^{jk}_{x_j}(x,u) \, d\tr
 \\
  &+\E \int_{t,x,s,y}   \D_{x_j} \phi_\theta \int_\tu^u h'(\tu-r)  \sigma^{ik}_{y_i} (y,\tu) \sigma^{jk}_r(x,r) \, dr
  \\
   &+\E \int_{t,x,s,y}   \phi_\theta \int_\tu^u h'(\tu-r) \sigma^{ik}_{y_i} (y,\tu) \sigma^{jk}_{rx_j}(x,r) \, dr
  \\               \label{eq:def-error}
   &-\E \int_{t,x,s,y}   \phi_\theta  h'(\tu-u) \sigma^{ik}_{y_i} (y,\tu)   \sigma^{jk}_{x_j}(x,u),
   \end{equs}
   where $u=u(t,x)$ and $\tu=\tu(s,y)$. 
\begin{remark}
The function $F_\theta$ is smooth in $(t,x,a)$ (see, e.g., \cite[Exercise 3.15, page 78]{Kun}).
\end{remark}
Set $\mu=\mu(m)=\frac{3m+5}{4(m+1)}$, which is chosen so that one has $\frac{m+3}{2(m+1)}<\mu<1$.
\begin{definition}\label{def:star}
A function $u\in L_{m+1}(\Omega_T\times \T^d)$ is said to have the $(\star)$-property if for all $h,\varrho,\varphi,\tu$ as above,
and for all sufficiently small $\theta>0$, we have that $F_\theta(\cdot, \cdot, u) \in L_1(\Omega_T \times \bT^d)$ and
\begin{equ}
\E \int_{t,x} F_\theta(t,x, u(t,x)) \leq  N\theta^{1-\mu}+ \mathcal{B}(u, \tu, \theta)   \label{eq:strong-entropy}
\end{equ}
hold with some constant $N$ independent of $\theta$.
\end{definition}
\begin{remark}
 
Notice that since $\varphi$ is supported in $(0,T)$ and $\rho_\theta(t-
\cdot)$ is supported in $[t-\theta, t]$, we have for all sufficiently small $\theta$
\begin{equs}
 F_\theta(t,x,a)
= &I_{t >\theta} \int_{t-\theta}^t \int_{y} h(\tu-a) \sigma^{ik}_{y_i}(y,\tu) \phi_\theta(t,x,s,y) \, d\beta^k(s)
\\
- &I_{t >\theta} \int_{t-\theta}^t  \int_{y} [\sigma^{ik}_{rx_i}h(\cdot-a)](y,\tu)  \phi_\theta(t,x,s,y)d\beta^k(s)
\\
-&I_{t >\theta} \int_{t-\theta}^t  \int_{y}[\sigma^{ik}_rh(\cdot-a)](y,\tu) \D_{y_i} \phi_\theta(t,x,s,y) \, d\beta^k(s).
\label{eq:F rewrite}
\end{equs}
\end{remark}
\begin{lemma}\label{lem:F}
For any  $\lambda\in ( \frac{m+3}{2(m+1)}, 1)$, $k\in\mathbb{N}$ we have for all sufficiently small $\theta \in (0,1)$
\begin{equation}            \label{eq:F-estimate-sharper}
\E \| \D_ a F_\theta \|_{L_\infty([0,T]; W^{k}_{m+1}(\bT^d \times \bR))}^{m+1} \leq N \theta^{-\lambda (m+1)}\mathcal{N}_m(\tu),
\end{equation} 
where 
\begin{equs}
\mathcal{N}_m(\tu):= \E \int_0^T (1+ \| \tu(t)\|_{L_{\frac{m+1}{2}}(\bT^d)}^{m+1}+ \|\tu(t) \|_{L_2(\bT^d)}^{m+1}) \, dt 
\end{equs}
and $N$ is a constant depending only on $N_0, N_1, k,d, T ,\lambda$, $m$,  and the functions $h, \varrho, \varphi$, but not on $\theta$. In particular, 
\begin{equs}
\E \| \D_ a F_\theta \|_{L_\infty([0,T]; W^{k}_{m+1}(\bT^d \times \bR))}^{m+1} \leq N \theta^{-\lambda (m+1)}(1+ \E \| \tu\|_{L_{m+1}(Q_T)}^{m+1}) \label{eq:F estimate}.
\end{equs}
\end{lemma}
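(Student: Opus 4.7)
The plan is to decompose $F_\theta=F^1_\theta+F^2_\theta+F^3_\theta$ according to the three stochastic integrals in the definition and focus on the main term $F^1_\theta$. The other two are easier because, by Assumption \ref{as:sigma} and the compact support of $h'$, $|[\sigma^{ik}_{rx_i}h'(\cdot-a)]|_{l_2}$ and $|[\sigma^{ik}_rh'(\cdot-a)]|_{l_2}$ are bounded uniformly in $\tu$; whereas $\D_aF^1_\theta$ carries the growth factor $|\sigma^{i\cdot}_{y_i}|_{l_2}\leq N_1(1+|\tu|)$ from \eqref{eq:sigma-growth}, which is the only source of $\tu$-growth.

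The key step for $F^1_\theta$ is a stochastic integration by parts in $s$. Fix $(x,a)\in\bT^d\times\bR$ and set
\begin{equation*}
N(s,x,a):=-\int_0^s\!\int_{y}h'(\tu-a)\,\sigma^{ik}_{y_i}(y,\tu)\,\varrho(x,y)\,dy\,d\beta^k(r),
\end{equation*}
a continuous martingale whose quadratic-variation density is dominated by $g(r,x,a)^2$ with $g(r,x,a):=\int_y|h'(\tu-a)|(1+|\tu|)|\varrho(x,y)|\,dy$. For $t\in(\theta,T]$ (the only regime where $F_\theta$ is nonzero, by the remark after \eqref{eq:def-error}) the boundary terms $\rho_\theta(t)$ and $\rho_\theta(t-T)$ vanish, and integration by parts against $\psi(s;t)=\rho_\theta(t-s)\varphi((t+s)/2)$ gives
\begin{equation*}
\D_aF^1_\theta(t,x,a)=\int_0^T N(s,x,a)\big[\rho'_\theta(t-s)\varphi((t+s)/2)-\tfrac12\rho_\theta(t-s)\varphi'((t+s)/2)\big]\,ds.
\end{equation*}
Since $\int\rho'_\theta(t-s)\,ds=0$ on $(\theta,T)$, I may replace $N(s,x,a)$ by $N(s,x,a)-N(t,x,a)$ in the first integral, the resulting $N(t,x,a)\int\rho'_\theta\varphi\,ds$-term being bounded by $C\sup_s|N|$ after a further integration by parts. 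This produces the pointwise bound
\begin{equation*}
|\D_aF^1_\theta(t,x,a)|\leq C\theta^{-1}\sup_{s\in[t-\theta,t]}|N(s,x,a)-N(t,x,a)|+C\sup_{s\leq T}|N(s,x,a)|,
\end{equation*}
so that after $\sup_t$ and raising to the $(m+1)$-th power the $\theta^{-1}$ loss is coupled to a short-scale modulus of continuity of $N$ rather than to a pointwise value.

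That modulus is estimated by chaining over intervals $I_j=[j\theta,(j+1)\theta]$: Doob's maximal inequality, BDG and H\"older (valid since $(m+1)/2\geq 1$) yield $\E\sup_{r\in I_j}|N(r,x,a)-N(j\theta,x,a)|^{m+1}\lesssim\theta^{(m-1)/2}\E\int_{I_j}g^{m+1}dr$, and summing in $j$ gives
\begin{equation*}
\E\sup_{|s-s'|\leq\theta}|N(s,x,a)-N(s',x,a)|^{m+1}\lesssim\theta^{(m-1)/2}\E\int_0^T g(r,x,a)^{m+1}\,dr.
\end{equation*}
To integrate in $(x,a)$, Minkowski's integral inequality in $a$ and the embedding $L_{(m+1)/2}(\bT^d)\hookrightarrow L_1(\bT^d)$ (valid since $m>1$) give $\int_{\bT^d\times\bR}g(r,x,a)^{m+1}\,dx\,da\leq C(1+\|\tu(r)\|^{m+1}_{L_{(m+1)/2}(\bT^d)})$. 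Combining, and noting that higher $(x,a)$-derivatives only introduce bounded factors $\D^\alpha_x\varrho$ and $h^{(j+1)}$ (still compactly supported in $a$),
\begin{equation*}
\E\|\D_aF_\theta\|^{m+1}_{L_\infty([0,T];W^k_{m+1}(\bT^d\times\bR))}\lesssim\theta^{-(m+1)+(m-1)/2}\mathcal{N}_m(\tu)=\theta^{-(m+3)/2}\mathcal{N}_m(\tu).
\end{equation*}
Since $\lambda(m+1)>(m+3)/2$ for every $\lambda\in((m+3)/(2(m+1)),1)$ and $\theta\in(0,1)$, this yields \eqref{eq:F-estimate-sharper}; \eqref{eq:F estimate} follows from $L_{m+1}(\bT^d)\hookrightarrow L_{(m+1)/2}(\bT^d)\cap L_2(\bT^d)$. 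The main obstacle is to beat the naive $\theta^{-(m+1)}$ exponent from BDG applied at fixed $t$; the integration-by-parts step is exactly what trades the $\theta^{-1}$ loss of $\rho'_\theta$ against the $\theta^{(m-1)/2}$ gain from the short-scale BDG modulus estimate for $N$.
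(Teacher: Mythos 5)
Your proof is correct, but it takes a genuinely different route from the paper's. The paper never integrates by parts in $s$: it applies Burkholder--Davis--Gundy directly to $\D_t^j\D_a^{l+1}\D_x^qF_\theta$ at fixed $t$ for $j=0$ and $j=1$, obtaining $\E\|\D_aF_\theta\|^{m+1}_{L_{m+1}([0,T];W^k_{m+1})}\lesssim\theta^{-(m+1)/2}\mathcal{N}_m(\tu)$ and $\E\|\D_aF_\theta\|^{m+1}_{W^1_{m+1}([0,T];W^k_{m+1})}\lesssim\theta^{-3(m+1)/2}\mathcal{N}_m(\tu)$, and then interpolates to $W^\delta_{m+1}$ in time and invokes the Sobolev embedding $W^\delta_{m+1}([0,T])\hookrightarrow L_\infty([0,T])$ for $\delta>\tfrac{1}{m+1}$; the strict inequality there is exactly why the paper only reaches $\lambda>\tfrac{m+3}{2(m+1)}$. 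Your argument replaces interpolation plus embedding by a pathwise mechanism: the integration by parts in $s$ moves the singular kernel onto $\rho'_\theta$ paired with the increment $N(s)-N(t)$ over a window of length $\theta$, and the chaining/BDG estimate $\E\sup_{|s-s'|\le\theta}|N(s)-N(s')|^{m+1}\lesssim\theta^{(m-1)/2}\E\int_0^Tg^{m+1}$ supplies the compensating gain. Both proofs trade the $\theta^{-1}$ loss against the $\approx\tfrac12$-H\"older time regularity of the martingale, but yours does so at the endpoint and yields the slightly sharper exponent $\theta^{-(m+3)/2}=\theta^{-(m+1)\cdot\frac{m+3}{2(m+1)}}$, which of course implies the stated bound for every admissible $\lambda$; the paper's version is more mechanical (one BDG computation per multi-index, uniform over $j,l,q$) and avoids any discussion of path continuity. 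One small inaccuracy in your preamble: the terms $F^2_\theta,F^3_\theta$ are not free of $\tu$-growth just because $|[\sigma^{ik}_{rx_i}h'(\cdot-a)]|_{l_2}$ and $|[\sigma^{ik}_rh'(\cdot-a)]|_{l_2}$ are bounded --- their supports in $a$ have length of order $1+|\tu|$, so the $da$-integration still produces the $\|\tu\|^{m+1}_{L_{(m+1)/2}(\bT^d)}$ factor (this is the paper's computation leading to \eqref{eq:estC2}--\eqref{eq:estC3}); this does not affect your conclusion since that quantity is part of $\mathcal{N}_m(\tu)$, but those two terms are handled by the same scheme rather than being trivially bounded.
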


\begin{proof}
To ease the notation we suppress the $y\in\T^d$ argument in $\tsigma$ and the $s,y\in Q_T$ arguments in $\tu$.
For any  $q \in \mathbb{N}^d$, $l \in \mathbb{N}$, $j \in \{0,1\}$, we have 
by the Burkholder-Davis-Gundy inequality
\begin{equs}
 \E  |&\D_t^j\D^{l+1}_a \D^q_xF_\theta (t,x,a ) |^{m+1} \\
 \le & \E  I_{t>\theta}\left[\int_{t-\theta}^t \sum_k\left(\int_{y} \D_a^{l+1} h(\tu-a)  \sigma^{ik}_{y_i}(\tu)   \D^q_x \D^j_t \phi_\theta \right)^2  \, ds \right]^{(m+1)/2}
 \\
 + & \E I_{t>\theta} \left[ \int_{t-\theta}^t \sum_k \left( \int_{y} \D_a^{l+1} [\sigma^{ik}_{rx_i}h(\cdot-a)](y,\tu) \D^q_x \D^j_t \phi_\theta \right)^2  ds \right]^{(m+1)/2}
 \\
 + &\E  I_{t>\theta} \left[ \int_{t-\theta}^t \sum_k \left(\int_{y} \D_a^{l+1}[\sigma^{ik}_rh(\cdot-a)](y,\tu) \D^q_x \D^j_t\D_{y_i} \phi_\theta \right)^2 \, ds \right]^{(m+1)/2}
\\
=& C_1+C_2+C_3.   \label{eq:C1C2C3}
\end{equs}
We deal first with $C_3$. By H\"older's inequality and \eqref{eq:bounded-sigma_r}, we have 
\begin{equs}
&\E  I_{t>\theta} \left[ \int_{t-\theta}^t \sum_k \left(\int_{y} \D_a^{l+1}[\sigma^{ik}_rh(\cdot-a)](y,\tu) \D^q_x \D^j_t\D_{y_i} \phi_\theta \right)^2 \, ds \right]^{(m+1)/2}
\\
 \le&  \E  I_{t>\theta} \left[ \int_{t-\theta}^t  \left(\int_{y} \int_{-|\tu|}^{|\tu |}  |\D_a^{l+1}h(r-a)|^2 \, dr \right) \left(\int_{y} \int_{-|\tu|}^{|\tu |} \sum_k \left| \sigma^{ik}_{r}(y,r) \right|^2 dr\right)   \theta^{-2(j+1)}  \, ds  \right]^{(m+1)/2}
\\
 \le &  \E  I_{t>\theta} \left[ \int_{t-\theta}^t  \left( \int_{y} \int_{-|\tu|}^{|\tu |}  |\D_a^{l+1}h(r-a)|^2 \, dr\right)  \| \tu\|_{L_1(\bT^d)} \ \theta^{-2(j+1)}  \, ds \right]^{(m+1)/2}.
\end{equs}
By H\"older's inequality we get 
\begin{equs}
C_3  \le  \theta^{\frac{m-1}{2}}\theta^{-(m+1)(1+j)} \E  I_{t>\theta}  \int_{t-\theta}^t  \int_{y} \left[ \int_{-|\tu|}^{|\tu |}  |\D_a^{l+1}h(r-a)|^2 \, dr \right]^{(m+1)/2}\| \tu\|_{L_1(\bT^d)}^{(m+1)/2}  \, ds 
\\
\le \theta^{\frac{m-1}{2}}\theta^{-(m+1)(1+j)} \E  I_{t>\theta}  \int_{t-\theta}^t  \| \tu\|_{L_1(\bT^d)}^{(m+1)/2} \int_{y} |\tu|^{(m-1)/2} \int_{-|\tu|}^{|\tu |}  |\D_a^{l+1}h(r-a)|^{(m+1)} \, dr   \, ds.
\\
\label{eq:double-role} 
\end{equs}
By integrating over $a  \in \bR$, using the fact that $h' \in C_c^\infty(\bR)$,  integrating over $[0, T] \times \bT^d$ and using the estimate \eqref{eq:whole-theta}
we obtain 
\begin{equation}      \label{eq:estC2}
\int_{t,x,a} C_3 \le \theta^{\frac{m-1}{2}}\theta^{-(m+1)(1+j)+1} \E \int_0^T \| \tu(t) \|_{L_{\frac{m+1}{2}}(\bT^d)}^{m+1}\, dt .
\end{equation}
In the same manner, one obtains
\begin{equation}            \label{eq:estC3}
\int_{t,x,a} C_2  \le \theta^{\frac{m-1}{2}}\theta^{-(m+1)(1+j)+1}\E \int_0^T \| \tu(t) \|_{L_{\frac{m+1}{2}}(\bT^d)}^{m+1}\, dt .
\end{equation}
Similarly, by \eqref{eq:sigma-growth}, H\"older's inequality, and \eqref{eq:whole-theta}, we obtain
\begin{equation}             \label{eq:estC1}
\int_{t,x,a} C_1 \leq \theta^{\frac{m-1}{2}}\theta^{-(m+1)(1+j)+1}\E\int_0^T (1+ \| \tu(t)\|_{L_{2}(\bT^d)}^{m+1})\, dt .
\end{equation}
Consequently, by \eqref{eq:estC2}-\eqref{eq:estC1} and \eqref{eq:C1C2C3}, we obtain 
\begin{equation}         \label{eq:choose-j}
\int_{t,x,a} \E  |\D_t^j\D^{l+1}_a \D^q_xF_\theta (t,x,a ) |^{m+1} \le  \theta^{-(m+1)(1+j)+1} \mathcal{N}_m(\tu).
\end{equation}
Choosing $j=0$ and  summing  over all $|q|+l \leq k$,  we obtain
\begin{equation}               \label{eq:LpWk}
\E \|\D_a F_\theta \|_{L_{m+1}([0,T]; W^{k}_{m+1}(\bT^d \times \bR))}^{m+1}\le \theta^{-\frac{m+1}{2}}\mathcal{N}_m(\tu).
\end{equation} 
Similarly, choosing $j=1$ in \eqref{eq:choose-j} and summing  over all $|q|+l \leq k$ gives
\begin{equation}                   \label{eq:W1Wk}
\E \|\D_a F_\theta \|_{W^1_{m+1}([0,T] ; W^{k}_{m+1}(\bT^d \times \bR))}^{m+1} \le  \theta^{-3\frac{(m+1)}{2}}\mathcal{N}_m(\tu).
\end{equation}
By interpolating between \eqref{eq:LpWk} and \eqref{eq:W1Wk}  we have for $\delta \in [0,1]$
\begin{equation*}
\E \|\D_a F_\theta \|_{W^\delta_{m+1}([0,T] ; W^{k}_{m+1}(\bT^d \times \bR))}^{m+1} \le  \theta^{-(m+1)(1+2\delta)/2} \mathcal{N}_m(\tu).
\end{equation*}
For arbitrary $\delta \in (1/(m+1),1/2)$, we set  $\lambda=(1+2\delta)/2$, 
and the claim follows by Sobolev embedding.
\end{proof}
\begin{corollary}\label{cor} 
(i) Let $u_n$ be a sequence bounded in $L_{m+1}(\Omega_T\times \bT^d)$, satisfying the $(\star)$-property  uniformly in $n$, that is,  with constant $N$ in \eqref{eq:strong-entropy} independent of $n$. 
Suppose that $u_n$ converges for almost all $\omega,t,x$ to a function $u$. Then $u$ has the $(\star)$-property.

(ii) Let $u\in L_2(\Omega\times Q_T)$. Then one has for all $\theta>0$
\begin{equ}\label{eq:0lambda limit}
\E \int_{t,x} F_\theta(t,x, u(t,x)) = \lim_{\lambda \to 0}  \E \int_{t,x,a} F_\theta(t,x, a) \rho_\lambda(u(t,x)-a) \, .
\end{equ}
\end{corollary}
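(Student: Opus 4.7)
My plan is to treat (ii) first, since its continuity argument drives (i).

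\textbf{Part (ii).} From the representation \eqref{eq:F rewrite}, $F_\theta(t,x,a)$ depends on $a$ only through $h(\tu-a)$ and through the primitives $[\sigma^{ik}_r h(\cdot-a)](y,\tu)$ and $[\sigma^{ik}_{rx_i}h(\cdot-a)](y,\tu)$, all $C^\infty$ in $a$ since $h\in C^\infty$. Hence, for a.e.\ $(\omega,t,x)$, the map $a\mapsto F_\theta(t,x,a)$ is continuous, and Lebesgue differentiation yields $\int F_\theta(t,x,a)\rho_\lambda(u(t,x)-a)\,da\to F_\theta(t,x,u(t,x))$ as $\lambda\to 0$ pointwise. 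To move this limit inside $\E\int_{t,x}$, I apply Lemma \ref{lem:F} with $k$ large enough that $W^k_{m+1}(\bT^d\times\bR)\hookrightarrow C_b$ by Sobolev embedding; the very same BDG computation as in the proof of that lemma, with $\D^l_a$ in place of $\D^{l+1}_a$, also bounds $F_\theta$ itself. Writing $F_\theta(t,x,a)=F_\theta(t,x,0)+\int_0^a\D_aF_\theta(t,x,\tilde a)\,d\tilde a$, the mollified integrand is majorised (uniformly in $\lambda\in(0,1)$, since $\supp\rho_\lambda\subset(0,1)$) by a function in $L_1(\Omega_T\times\bT^d)$, making use of $u\in L_2\subset L_1$. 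Dominated convergence now yields \eqref{eq:0lambda limit}.

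\textbf{Part (i).} The plan is to pass to the limit $n\to\infty$ in
\[
\E\int_{t,x}F_\theta(t,x,u_n(t,x))\leq N\theta^{1-\mu}+\mathcal{B}(u_n,\tu,\theta),
\]
valid by hypothesis with $N$ independent of $n$. On the left, the continuity of $F_\theta$ in $a$ together with $u_n\to u$ a.e.\ gives pointwise convergence $F_\theta(\cdot,\cdot,u_n)\to F_\theta(\cdot,\cdot,u)$; for $L_1$-convergence I verify uniform integrability via a uniform $L_p$-bound for some $p\in(1,m+1)$, combining the pointwise estimate $|F_\theta(\cdot,\cdot,u_n)|\leq|F_\theta(\cdot,\cdot,0)|+|u_n|\sup_a|\D_aF_\theta|$ with H\"older's inequality, the uniform $L_{m+1}$-bound on $u_n$, and the Sobolev-embedded Lemma \ref{lem:F}; Vitali's theorem then applies. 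On the right, each of the ten summands of $\mathcal{B}(u_n,\tu,\theta)$ has the form $\E\int_{t,x,s,y}\Psi_\theta\,G(u_n,\tu)$ with $\Psi_\theta$ a derivative of $\phi_\theta$ (bounded, independent of $n$) and $G$ an iterated integral of $h'\in C_c^\infty$ times bounded coefficients; consequently $G$ is jointly continuous in $(u_n,\tu)$ and pointwise dominated by $C_\theta(1+|u_n|^2+|\tu|^2)$, which is uniformly integrable on $\Omega_T\times Q_T$ by the $L_{m+1}$-bound. Dominated convergence yields $\mathcal{B}(u_n,\tu,\theta)\to\mathcal{B}(u,\tu,\theta)$, and $F_\theta(\cdot,\cdot,u)\in L_1$ is a byproduct of the same majorant. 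Combining both convergences gives the $(\star)$-inequality for $u$ with the same constant $N$.

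\textbf{Main obstacle.} The technical crux in both parts is producing an integrable majorant for $F_\theta$ (or its mollification) that is independent of the parameter being sent to zero/infinity. This requires revisiting the computation of Lemma \ref{lem:F} to bound $F_\theta$ itself, not only $\D_aF_\theta$, and then invoking Sobolev embedding $W^k_{m+1}(\bT^d\times\bR)\hookrightarrow C_b$ to convert norm bounds into pointwise bounds; one must also verify that the resulting majorant depends on $u_n$ only through $\|u_n\|_{L_{m+1}}$, so that the uniform $L_{m+1}$-hypothesis on the sequence actually suffices.
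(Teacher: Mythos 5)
Your proposal is correct and follows essentially the same route as the paper: for (i), pointwise convergence of $F_\theta(\cdot,\cdot,u_n)$ combined with the majorant $|F_\theta(t,x,0)|+|u_n|\,\|\D_aF_\theta\|_{L_\infty(Q_T\times\bR)}$ from Lemma \ref{lem:F} to get uniform integrability (and an easier dominated-convergence argument for $\mathcal{B}(u_n,\tu,\theta)$), and for (ii), the Lipschitz-in-$a$ control of $F_\theta$, which the paper states directly as $\big|F_\theta(t,x,u)-\int_a F_\theta(t,x,a)\rho_\lambda(u-a)\big|\leq\lambda\|\D_aF_\theta\|_{L_\infty(Q_T\times\bR)}$. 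One caveat: your side remark that the BDG computation of Lemma \ref{lem:F} ``with $\D_a^l$ in place of $\D_a^{l+1}$ also bounds $F_\theta$ itself'' in $W^k_{m+1}(\bT^d\times\bR)$ is not right as stated, because that computation relies on $\D_a^{l+1}h=\D_a^l h'$ being compactly supported, whereas $h$ itself is not, so $F_\theta(t,x,\cdot)$ does not decay in $a$ and fails to lie in $L_{m+1}(\bR)$; what your argument actually needs (and ultimately uses) is only $\E\int_{t,x}|F_\theta(t,x,0)|<\infty$, which follows from BDG at the single value $a=0$, exactly as in the paper.
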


\begin{proof}
(i) We have that $\lim_{n \to \infty} F_\theta(t,x, u_n(t,x))=F_\theta(t,x, u(t,x))$ for almost all $(\omega,t,x)$. Moreover, 
\begin{equ}\label{eq:simple}
|F_\theta(t,x, u_n(t,x))|\leq \|\D_aF_\theta\|_{L_\infty(Q_T\times\bR)} |u_n(t,x)|+|F(t,x,0)|. 
\end{equ}
By Lemma \ref{lem:F}, and the fact that  $\E\int_{t,x}|F_\theta(t,x,0)|<\infty$,
we see that the right hand side above is uniformly integrable in $(\omega,t,x)$.
Hence, one can take limits on
the left-hand side of \eqref{eq:strong-entropy} to get
$$
\lim_{n\to\infty}\E \int_{t,x} F_\theta(t,x, u_n(t,x))=\E \int_{t,x} F_\theta(t,x, u(t,x)).
$$
By similar (in fact, easier) arguments one can see the convergence of the second term on the right-hand side of \eqref{eq:strong-entropy}, and since   the constant $N$ was assumed to be independent of $n \in \bN$, we get the claim.

(ii) Writing
$$
\big| F_\theta(t,x,u(t,x))-\int_aF_\theta(t,x, a) \rho_\lambda(u(t,x)-a) \,  \big|
\leq \lambda\| \D_a F_\theta\|_{L_\infty(Q_T\times\bR)},
$$
the claim simply follows from Lemma \ref{lem:F}.
\end{proof}
\section{Stability under the $(\star)$-property} \label{Stability}

\begin{theorem}  \label{thm:uniqueness}
Let $(\Phi, \xi)$,  $(\tilde{\Phi}, \tilde{\xi})$   satisfy Assumption \ref{as:A}, and $\sigma, G$ satisfy Assumption \ref{as:sigma}.
Let $u$, $\tu$ be two entropy solutions of $\Pi(\Phi,\xi)$, $\Pi(\tilde{\Phi}, \txi)$ respectively,
and assume that $u$ has the $(\star)$-property.
Then,

\begin{enumerate}[(i)]
\item if furthermore $\Phi=\tilde{\Phi}$, then 
\begin{equ}\label{eq:L_1 contraction}
\esssup_{t\in[0,T]}\E\|u(t)-\tu(t)\|_{L_1(\bT^d)} \leq N \E\|\xi-\txi \|_{L_1(\bT^d)},
\end{equ}
where $N$ is a constant depending only on $N_0, N_1, d$ and $T$, 
\item  \label{it:super-inequality}for all $\eps,\delta\in(0,1]$, $\lambda\in[0,1]$ and $\alpha\in(0,1\wedge(m/2))$,  we have 
\begin{equs}
\E \|u-\tilde{u}\|_{L_1(Q_T)}   
& \leq N \E \|\xi-\txi\|_{L_1(\bT^d)}
\\
&+N\eps^{\frac{2}{m+1}}\big(1+\E\|\nabla[\fra](u)\|_{L_1(Q_T)}\big)
+N\sup_{|h|\leq\eps} \E\|\txi(\cdot)-\txi(\cdot+h)\|_{L_1(\T^d)}
\\
&+N\eps^{-2}\E\big(\|I_{|u|\geq R_\lambda}(1+|u|)\|_{L_m(Q_T)}^m+
\|I_{|\tu|\geq R_\lambda}(1+|\tu|)\|_{L_m(Q_T)}^m\big)
\\
&+N C(\delta, \eps, \lambda) \E(1+\|u\|^{m+1}_{L_{m+1}(Q_T)}+\|\tu\|_{L_{m+1}(Q_T)}^{m+1}),
\label{eq:super inequality}
\end{equs}
where
\begin{equs}\label{eq:R lambda}
R_\lambda& :=\sup\{R\in[0,\infty]:\,|\fra(r)-\tfra(r)|\leq\lambda, \,\,\forall |r|<R\},
\\
C(\delta, \eps, \lambda)&:= \big(\delta^\beta + \delta^{2\beta} \eps^{-2}+\delta^\beta \eps^{-1}+\eps^{2\bar\kappa}\delta^{-1}+\eps^{-2}\delta^{2\alpha}+\eps^{-2}\lambda^2+\eps^{\tilde{\beta}}+\eps^{\bar \kappa}),
\end{equs}
and $N$ is a constant depending only on $N_0, N_1, m,K,d,T$, and $\alpha$.
\end{enumerate}
\end{theorem}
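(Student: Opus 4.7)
The plan is a stochastic Kruzhkov doubling-of-variables argument in the spirit of \cite{DareiotisGerencserGess}, with the novel ingredient being the use of the $(\star)$-property to handle the stochastic cross-terms produced by the doubling. For a smooth approximation $\eta_\delta$ of $r\mapsto|r|$ with $\eta_\delta''$ compactly supported, I would apply the entropy inequality \eqref{eq:entropy-inequality} for $u$ with entropy $\eta_\delta(\cdot -a)$ and test function $\phi(t,x)=\phi_\theta(t,x,s,y)$, and symmetrically for $\tu$ in variables $(s,y)$. After substituting $a=\tu(s,y)$ in the former and $a=u(t,x)$ in the latter (legitimate since both sides are continuous in $a$), adding, taking expectation, and integrating out the remaining variables, one obtains a master inequality controlling $-\E\int\eta_\delta(u-\tu)(\D_t+\D_s)\phi_\theta$ in terms of an initial-data contribution, parabolic dissipation, lower-order drift errors, and stochastic terms. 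The eventual limit passage is $\theta\to 0$, then $\delta\to 0$, then optimization of the spatial mollifier $\varrho$ of diameter $\eps$.

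The deterministic side, after chain-rule manipulations on the primitives $[\fra^2\eta'](u),[\tfra^2\eta'](\tu),[a^{ij}\eta'](u),\dots$ and integration by parts in $x$ and $y$, produces the Carrillo-type term
\[
-\E\int_{t,x,s,y}\eta_\delta''(u-\tu)\,\phi_\theta\,\bigl|\nabla_x[\fra](u)-\nabla_y[\tfra](\tu)\bigr|^2,
\]
which is nonpositive when $\Phi=\tPsi$ and can be dropped. When $\Phi\neq\tPsi$, splitting at $R_\lambda$ gives $|\fra-\tfra|\leq\lambda$ on $\{|u|\vee|\tu|<R_\lambda\}$, producing the $\eps^{-2}\lambda^2$ term, while the $L_m$-integrability of $u,\tu$ handles the complement and produces the $\eps^{-2}\E\|I_{|u|\geq R_\lambda}(1+|u|)\|_{L_m}^m$-type terms. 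The lower-order drift mismatches $a^{ij},b^i,f^i$ evaluated at $u$ versus $\tu$ are estimated via the H\"older and Lipschitz bounds of Assumption \ref{as:sigma}: combined with time smoothing against $\rho_\theta$ (transferred via integration by parts onto $\delta$-scale primitives) and spatial smoothing against $\varrho$, Young's inequality reproduces exactly the terms $\delta^\beta,\delta^{2\beta}\eps^{-2},\delta^\beta\eps^{-1},\eps^{2\bar\kappa}\delta^{-1},\eps^{-2}\delta^{2\alpha},\eps^{\tilde\beta},\eps^{\bar\kappa}$ aggregated in $C(\delta,\eps,\lambda)$.

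The stochastic side is the main obstacle. The $d\beta^k$ integrals arising from the $u$-equation, tested against functions of $\tu(s,y)$, are $\cF_t$-adapted in $(t,x)$ and are reducible by classical stochastic Fubini plus It\^o's formula. In contrast, the symmetric $d\beta^k(s)$ integrals from the $\tu$-equation, tested against functions of $u(t,x)$, are \emph{not} $\cF_s$-adapted (since $u(t,x)$ may depend on noise past time $s$), and these are precisely the object $F_\theta(t,x,u(t,x))$ of Definition \ref{def:star}. Their control rests entirely on \eqref{eq:strong-entropy}, which furnishes $\E\int F_\theta(t,x,u)\leq N\theta^{1-\mu}+\mathcal{B}(u,\tu,\theta)$; the nine It\^o cross-corrections encoded in $\mathcal{B}$ are designed to match, term by term, the It\^o corrections produced on the adapted $u$-side, so that they cancel in the sum. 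Lemma \ref{lem:F}, together with the hypothesis $\tu\in L_{m+1}$, ensures that the remainder $\theta^{1-\mu}\to 0$ as $\theta\to 0$.

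Passing to the limits, $\theta\to 0$ collapses the time doubling and uses Lemma \ref{lem:initial-condition} to identify the initial-data contribution, with $\varphi$ chosen as a smooth approximation of $I_{[0,t_0]}$ at a Lebesgue point $t_0$. Next $\delta\to 0$ replaces $\eta_\delta(u-\tu)$ by $|u-\tu|$. To remove $\varrho(x,y)$, I would invoke Lemma \ref{lem:frac reg} on $u$ to produce the $\eps^{2/(m+1)}(1+\E\|\nabla[\fra](u)\|_{L_1(Q_T)})$ term, and $L_1$-translation continuity of the deterministic initial data $\txi$ to produce the $\sup_{|h|\leq\eps}\E\|\txi(\cdot)-\txi(\cdot+h)\|_{L_1}$ term. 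Assembling everything gives \eqref{eq:super inequality}, proving part (ii). For part (i), $\Phi=\tPsi$ allows $\lambda\to 0$ so that $R_\lambda\to\infty$ and the tail and $\lambda^2$ contributions vanish; the remaining $C(\delta,\eps,0)$ is driven to zero by choosing $\eps^{2\bar\kappa}\ll\delta\ll\eps^{1/\beta}\wedge\eps^{1/\alpha}$, a regime made possible precisely by the hypotheses $\bar\kappa\in((m\wedge 2)^{-1},1]$, $\beta\in((2\bar\kappa)^{-1},1]$ of Assumption \ref{as:sigma} and the freedom to pick $\alpha\in(1/(2\bar\kappa),1\wedge(m/2))$. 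Taking essential supremum over $t_0$ yields the $L_1$-contraction \eqref{eq:L_1 contraction}.
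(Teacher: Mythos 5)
Your proposal is correct and follows essentially the same route as the paper's proof: Kruzhkov doubling with $\eta_\delta(\cdot-a)$ and $\phi_{\theta,\eps}$, substitution of $a=\tu(s,y)$ (resp.\ $a=u(t,x)$), vanishing of the adapted stochastic integral and control of the anticipating one via the $(\star)$-property with error $\mathcal B(u,\tu,\theta)$, the Carrillo-type absorption of the dissipation terms with the $R_\lambda$ splitting, Lemma \ref{lem:frac reg} and translation continuity of $\txi$ to remove the spatial mollifier, and finally Gronwall with the scaling $\delta\sim\eps^{2\nu}$, $\eps^{2\bar\kappa}\ll\delta\ll\eps^{1/\beta}\wedge\eps^{1/\alpha}$ made admissible by Assumption \ref{as:sigma}. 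The only substantive work your sketch leaves implicit is the term-by-term bookkeeping carried out in the paper's Lemmas \ref{lem:A+B} and \ref{lem:C} and in the cited estimate from \cite{DareiotisGerencserGess}, but the mechanism you describe for each group of terms is the one actually used.
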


We collect first some technical results that will be needed for the proof of the above theorem. Let us first introduce some notation that will be used throughout this section.  

Denote $\varrho_\eps=\rho_\eps^{\otimes d}$, and fix a $\varphi \in C^\infty_c((0,T))$ such that 
$\|\varphi\|_{L_\infty([0,T])}\vee\|\partial_t\varphi\|_{L_1([0,T])}\leq 1$.
Introduce, for $\theta,\eps>0$,
\begin{equ}
\phi_{\theta, \varepsilon}(t, x,s,y)= \rho_{\theta}(t-s)\varrho_\varepsilon\left(x-y\right) \varphi \left(\tfrac{t+s}{2}\right),
\,\,
\phi_{\eps}(t,x,y)=\varrho_\eps(x-y)\varphi(t).
\end{equ}
Furthermore,  for each $\delta>0$, let $\eta_\delta\in C^2(\bR)$ be defined by
\begin{equ}
\eta_\delta(0)=\eta_\delta^\prime(0)=0,\quad\eta_\delta^{\prime\prime}(r)=\rho_\delta(|r|).
\end{equ}
Note that
\begin{equ}\label{eq:eta prop}
\big|\eta_\delta(r)-|r|\big|\leq\delta,\quad\supp\eta_\delta^{\prime\prime}\subset[-\delta,\delta],
\quad\int_\bR|\eta_\delta^{\prime\prime} (r-\zeta)|\,d\zeta\leq2,
\quad|\eta^{\prime\prime}_\delta|\leq 2\delta^{-1}.
\end{equ}
For $g : \bT^d \times \bR \to \bR$ we introduce the notation
\begin{equation}
[g,\delta](x,r,a):= [g\eta^{\prime}_\delta( \cdot-a)](x,r). 
\end{equation}
Finally, with the short hand notation  $u=u(t,x)$ and $\tu=\tu(t,y)$ let us define 
the quantities 
 \begin{equs}
  \mathcal{A}^{(\eps, \delta) } (u, \tu):= & \E \int_{t,x,y}   \left( [ a^{ij}, \delta](x,u,\tu) \D_{x_ix_j} \phi_{ \varepsilon}  + \left( [a^{ij}_{x_j},\delta ](x,u,\tu) -\eta^{\prime}_\delta(u-\tu) b^i(x,u)\right) \D_{x_i} \phi_{\varepsilon}\right)  
\\
+ & \E \int_{t,x,y}  \left( [ a^{ij}, \delta](y,\tu,u) \D_{y_iy_j} \phi_{\varepsilon}  + \left( [a^{ij}_{x_j},\delta ](y,\tu,u) -\eta^{\prime}_\delta(\tu-u) b^i(y,\tu)\right) \D_{y_i} \phi_{ \varepsilon}\right), 
  \end{equs}
also, 
\begin{equs}
\mathcal{B}^{(\eps, \delta)}_ 1(u,\tu)=&  -\E \int_{t,x,y}   \D_{y_ix_j} \phi_\eps \int_\tu^u \int_r^\tu \eta''_\delta(\tilde{r}-r) \sigma^{ik}_r(y,\tilde{r}) 
  \sigma^{jk}_r(x, r) \, d\tilde{r} d r
 \\
\mathcal{B}^{(\eps, \delta)}_2(u,\tu)= &- \E \int_{t,x,y}   \D_{y_i}\phi_\eps \int_\tu^u \int_r^\tu \eta''_\delta(\tr-r) \sigma^{ik}_r(y,\tr)  \sigma^{jk}_{rx_j}(x,r) \,  d\tr d r 
 \\
\mathcal{B}^{(\eps, \delta)}_3(u,\tu) =&\E \int_{t,x,y}   \D_{y_i}\phi_\eps \int_u^\tu \eta''_\delta(\tr-u) \sigma^{ik}_r(y,\tr)  \sigma^{jk}_{x_j} (x,u)  \, d\tr  
 \\
\mathcal{B}^{(\eps, \delta)}_4(u,\tu)= &-\E \int_{t,x,y}    \D_{x_j}\phi_\eps  \int_\tu^u \int_r^\tu \eta''_\delta(\tr-r) \sigma^{ik}_{ry_i}(y,\tr) \sigma^{jk}_r(x,r) \, d\tr dr 
 \\
\mathcal{B}^{(\eps, \delta)}_5(u,\tu)= &-\E \int_{t,x,y}   \phi_\eps \int_\tu^u  \int_r^\tu \eta''_\delta(\tr-r) \sigma^{ik}_{ry_i}(y,\tr)    \sigma^{jk}_{rx_j}(x,r) \, d\tr dr
 \\
\mathcal{B}^{(\eps, \delta)}_6(u,\tu)= & \E \int_{t,x,y}   \phi_\eps \int_u^\tu \eta''_\delta(\tr-u) \sigma^{ik}_{ry_i}(y,\tr)  \sigma^{jk}_{x_j}(x,u) \, d\tr
 \\
\mathcal{B}^{(\eps, \delta)}_7(u,\tu) = &\E \int_{t,x,y}   \D_{x_j} \phi_\eps \int_\tu^u \eta''_\delta(\tu-r)  \sigma^{ik}_{y_i} (y,\tu) \sigma^{jk}_r(x,r) \, dr
  \\
 \mathcal{B}^{(\eps, \delta)}_8(u,\tu) = &\E \int_{t,x,y}   \phi_\eps \int_\tu^u \eta''_\delta(\tu-r) \sigma^{ik}_{y_i} (y,\tu) \sigma^{jk}_{rx_j}(x,r) \, dr
  \\
\mathcal{B}^{(\eps, \delta)}_9(u,\tu)   =&-\E \int_{t,x,y}  \phi_\eps  \eta''_\delta(\tu-u) \sigma^{ik}_{y_i} (y,\tu)   \sigma^{jk}_{x_j}(x,u)
\end{equs}
and
\begin{equs}
 \mathcal{B}^{(\eps, \delta)}(u,\tu) : = \sum_{l=1}^9 \mathcal{B}^{(\eps, \delta)}_l(u, \tu),
   \end{equs}
and finally,
\begin{equs}
\mathcal{C}^{(\eps, \delta)}(u, \tu):= &\E \int_{t,x,y}\left(\eta^{\prime}_\delta(u-\tu) f^i_{x_i}(x,u)\phi_{\varepsilon}-[f^i_{rx_i}, \delta ](x,u,\tu)\phi_{\varepsilon} -[f^i_r ,\delta](x,u, \tu)\D_{x_i} \phi_{ \varepsilon}\right)  
\\
+ &\E \int_{t,x,y}\left(\eta^{\prime}_\delta(\tu-u) f^i_{x_i}(y,\tu)\phi_{\varepsilon}-[f^i_{rx_i}, \delta ](y,\tu,u)\phi_{ \varepsilon} -[f^i_r ,\delta](y,\tu,u)\D_{y_i} \phi_{ \varepsilon}\right).
\end{equs}

  With this notation we have the following lemmata. 
\begin{lemma}\label{lem:A+B}
There exists a constant $N=N(N_0, N_1, d, T)$ such that for all $u, \tu \in L_1(Q_T)$ and all $\eps, \delta \in (0,1)$
\begin{equs}
\mathcal{A}^{(\eps, \delta)}(u, \tu) 
+ \sum _{l=1}^8 \mathcal{B}^{(\eps, \delta)}_l(u, \tu)&  \leq N C_0(\eps, \delta) \left(  \E \|u\|_{L_1(Q_T)}+  \E \|\tu\|_{L_1(Q_T)}\right)
\\
+& N   \E \int_{t,x,y} \left(\varepsilon^ 2\sum_{ij}| \D_{x_iy_j}\phi_\eps|  
+\varepsilon \sum_{i}|\D_{x_i}\phi_\eps| + \phi_\eps\right) |u-\tu|,
\end{equs}
where 
\begin{equs}
C_0(\eps, \delta) = \delta^{2\beta}\eps^{-2}+\delta^\beta \eps^{-1}+\delta^\beta+\eps+\eps^{\bar \kappa}.
\end{equs}
\end{lemma}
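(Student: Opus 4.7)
The plan is to treat the seventeen summands in $\mathcal{A}^{(\eps,\delta)} + \sum_{l=1}^8 \mathcal{B}_l^{(\eps,\delta)}$ algebraically, extracting the leading behavior as $\delta \downarrow 0$ and controlling the remainders. I would first exploit the symmetry $\phi_\eps(t,x,y) = \varrho_\eps(x-y)\varphi(t)$, which gives $\D_{y_i}\phi_\eps = -\D_{x_i}\phi_\eps$, $\D_{y_iy_j}\phi_\eps = \D_{x_ix_j}\phi_\eps$, $\D_{y_ix_j}\phi_\eps = -\D_{x_ix_j}\phi_\eps$, together with the oddness of $\eta'_\delta$ (since $\eta_\delta$ is even, by \eqref{eq:eta prop}). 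Each term of $\mathcal{A}^{(\eps,\delta)}$ is paired with its $(x,u) \leftrightarrow (y,\tu)$ counterpart, and each $\mathcal{B}_l$ with its natural partner in $\mathcal{A}$.

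\textbf{Collapse and cancellation.} On the support of $\eta'_\delta(r-\tu)$ and $\eta''_\delta(\tilde r - r)$, the integration variables lie within $\delta$ of the pivot. Using the $\beta$-H\"older continuity of $\sigma_r$ in $r$ from \eqref{eq:regularity-sigma-2}, I would replace $\sigma^{ik}_r(y,\tilde r)$ by $\sigma^{ik}_r(y,r)$ up to an error of order $\delta^\beta$. After this replacement the inner integrals in the $\mathcal{B}_l$'s collapse via $\int_r^\tu \eta''_\delta(\tilde r - r)\,d\tilde r = \eta'_\delta(\tu-r)$, reducing each double integral to a single integral structurally identical to those in $\mathcal{A}^{(\eps,\delta)}$. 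Combining the sign and pairing manipulations with Remark~3.1 (which permits shifting the lower endpoint of $[\cdot]$ to the Taylor pivot), the It\^o-correction parts of $\mathcal{A}^{(\eps,\delta)}$ cancel against the leading parts of $\mathcal{B}_1,\mathcal{B}_2,\mathcal{B}_4,\mathcal{B}_5$. What remain are boundary contributions of the form $[a^{ij},\delta](x,u,\tu) - a^{ij}(x,\tu)\bigl(\eta_\delta(u-\tu)-\eta_\delta(\tu)\bigr)$ and analogues for $b^i$ and $f^i$-type coefficients; these are pointwise $O(|u-\tu|)$ (using $|\eta_\delta(r)-|r||\leq\delta$) and combine with the derivatives of $\phi_\eps$ to yield precisely the main term $\bigl(\eps^2 \sum_{ij}|\D_{x_iy_j}\phi_\eps| + \eps\sum_i |\D_{x_i}\phi_\eps| + \phi_\eps\bigr)|u-\tu|$ on the right-hand side.

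\textbf{Error accounting and main obstacle.} The $\delta^\beta$ errors generated in the Taylor step pair with the various derivatives of $\phi_\eps$: second $x$-derivatives contribute a factor $\eps^{-2}$ and, because each $\mathcal{B}_l$ carries two $\sigma_r$'s, yield $\delta^{2\beta}\eps^{-2}$; first derivatives give $\delta^\beta\eps^{-1}$; undifferentiated terms give $\delta^\beta$. Pairs of terms evaluated at different spatial points with $|x-y|\leq\eps$ are compared using the $\bar\kappa$-H\"older bounds \eqref{eq:bounded-sigma_r}, \eqref{eq:regularity-sigma-5}, producing the $\eps^{\bar\kappa}$ summand, while the Lipschitz estimates in $x$ of the coefficients from \eqref{eq:Lip-b} yield the $\eps$ summand of $C_0$. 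The linear-growth bounds \eqref{eq:F-linear-growth}--\eqref{eq:sigma-growth} convert polynomial factors $(1+|u|),(1+|\tu|)$ into the $L_1$ norms on the right-hand side after integrating against $\varrho_\eps$. The main obstacle will be the bookkeeping of the previous paragraph: one must identify exactly which pairs cancel at leading order, and verify that no spurious $\delta^{-1}$ singularities are produced, i.e.\ that every $\delta$-singularity lives only inside the H\"older error and is therefore tamed by $\delta^\beta$. The exclusion of $\mathcal{B}_9$ from the bound is natural, as $\mathcal{B}_9$ contains $\eta''_\delta(\tu-u)$ with no $\delta$-smoothed spatial or coefficient difference in front of it, and therefore represents the genuine martingale-cross contribution that must be absorbed separately in the doubling-the-variables estimate.
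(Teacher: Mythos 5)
Your overall plan (pair each $\mathcal{B}^{(\eps,\delta)}_l$ with a partner term from $\mathcal{A}^{(\eps,\delta)}$, exploit $\D_{y_i}\phi_\eps=-\D_{x_i}\phi_\eps$ and the H\"older bounds on $\sigma_r$, and keep the exact increments $|u-\tu|$ against the derivatives of $\phi_\eps$) is the right one, and your explanation for why $\mathcal{B}_9$ is excluded is correct. But the mechanism you describe for the second-order block contains a genuine quantitative gap. Replacing $\sigma^{ik}_r(y,\tilde r)$ by $\sigma^{ik}_r(y,r)$ costs an error of size $\delta^\beta$ by \eqref{eq:regularity-sigma-2}; multiplied by the bounded factor $\sigma^{jk}_r(x,r)$, by $\int\!\!\int|\eta''_\delta|\lesssim|u-\tu|$ and by $|\D_{x_iy_j}\phi_\eps|\sim\eps^{-2}$, this produces an error of order $\delta^{\beta}\eps^{-2}\,\E(\|u\|_{L_1}+\|\tu\|_{L_1})$. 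This is \emph{not} admissible: $C_0(\eps,\delta)$ contains only $\delta^{2\beta}\eps^{-2}$, and in the application (Theorem \ref{thm:uniqueness}(i), where $\delta=\eps^{2\nu}$ with merely $2\beta\nu>1$) one has $\delta^{\beta}\eps^{-2}=\eps^{2\beta\nu-2}\to\infty$ while $\delta^{2\beta}\eps^{-2}=\eps^{4\beta\nu-2}\to0$. Your error accounting asserts the correct exponent "because each $\mathcal{B}_l$ carries two $\sigma_r$'s", but a one-sided replacement followed by collapse only ever generates \emph{one} H\"older increment, not a product of two.

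To get the product of two increments you must keep the double integral and use the exact algebraic identity together with the symmetry $\D_{x_iy_j}\phi_\eps=\D_{x_jy_i}\phi_\eps$:
\begin{equation*}
\D_{x_iy_j}\phi_\eps\bigl(a^{ij}(x,r)+a^{ij}(y,\tr)-\sigma^{ik}_r(x,r)\sigma^{jk}_r(y,\tr)\bigr)
=\tfrac12\,\D_{x_iy_j}\phi_\eps\bigl(\sigma^{ik}_r(x,r)-\sigma^{ik}_r(y,\tr)\bigr)\bigl(\sigma^{jk}_r(x,r)-\sigma^{jk}_r(y,\tr)\bigr),
\end{equation*}
so that each factor is $\lesssim\eps+\delta^\beta$ by \eqref{eq:bounded-sigma_r} and \eqref{eq:regularity-sigma-2}, and the product gives $\eps^2+\delta^{2\beta}$; the $\eps^2$ part stays with $|\D_{x_iy_j}\phi_\eps|\,|u-\tu|$ and only the $\delta^{2\beta}$ part is paid with $\eps^{-2}$. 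Two smaller bookkeeping points: $\mathcal{B}_5$ does not participate in any cancellation (it carries no derivative of $\phi_\eps$ and is bounded directly by $\E\int\phi_\eps|u-\tu|$); and you never say where the terms $\mathcal{B}_3,\mathcal{B}_7$ go — they must be grouped with the $b^i$-terms of $\mathcal{A}$, and it is precisely there that the structural hypothesis \eqref{eq:sigma-x-sigma-xr} on $\D_r(\sigma^{jk}_{x_j}\sigma^{ik}_{rx_l})$ is needed, an ingredient absent from your proposal.
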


\begin{proof}
By  Remark \ref{rem:intergation-limits} (with $a= \tu(t,y)$), the relation $\D_{x_i x_j} \phi_\eps= - \D_{x_i y_j}\phi_\eps$, and the identity 
\begin{equs}                            \label{eq:idendity-eta}
\eta'_\delta(r-\tu) = \int_\tu^r \eta''_\delta(r- \tr)  \, d\tr,
\end{equs}
we have
\begin{equs}                             
&\E \int_{t,x,y}   \left( [ a^{ij}, \delta](x,u,\tu) \D_{x_ix_j} \phi_{ \varepsilon}  + \left( [a^{ij}_{x_j},\delta ](x,u,\tu) -\eta^{\prime}_\delta(u-\tu) b^i(x,u)\right) \D_{x_i} \phi_{\varepsilon}\right) 
\\
=&-\E \int_{t,x,y} \D_{x_iy_j} \phi_\eps \int_\tu^u\int_\tu^r  \eta^{\prime \prime}_\delta(r-\tr)a^{ij}(x,r) d\tr dr
\\                                 \label{eq:u,tu}
&-\E \int_{t,x,y} \left( \D_{x_i} \phi_\eps \eta^\prime_\delta(u-\tu)b^i(x,u)+ \D_{x_i} \phi_\eps \int_\tu^u \eta^\prime_\delta(r-\tu) a^{ij}_{x_j}(x,r) \, dr \right).
\end{equs}
By symmetry we have that 
\begin{equs}                
& \E \int_{t,x,y}  \left( [ a^{ij}, \delta](y,\tu,u) \D_{y_iy_j} \phi_{\varepsilon}  + \left( [a^{ij}_{x_j},\delta ](y,\tu,u) -\eta^{\prime}_\delta(\tu-u) b^i(y,\tu)\right) \D_{y_i} \phi_{ \varepsilon}\right)  
\\
=&-\E \int_{t,x,y} \D_{x_iy_j} \phi_\eps \int_u^{\tu}\int_u^{\tr}  \eta^{\prime \prime}_\delta(\tr-r)a^{ij}(y,\tr) dr d \tr
\\                                       \label{eq:tu,u}
&-\E \int_{t,x,y} \left( \D_{x_i} \phi_\eps \eta^\prime_\delta(\tu-u)b^i(y,\tu)+ \D_{x_i} \phi_\eps \int_u^\tu \eta^\prime_\delta(\tr-u) a^{ij}_{y_j}(y,\tr) \, d\tr \right).
\end{equs}
Notice that 
\begin{equs}
&-\E \int_{t,x,y} \D_{x_iy_j} \phi_\eps \int_\tu^u\int_\tu^r  \eta^{\prime \prime}_\delta(r-\tr)a^{ij}(x,r) d\tr dr
\\
=&-\E \int_{\tu \leq u} \D_{x_iy_j} \phi_\eps \int_\tu^u\int_\tu^u I_{\tr \leq r} \eta^{\prime \prime}_\delta(r-\tr)a^{ij}(x,r) d\tr dr
\\                      \label{eq:expan1}
 &-\E \int_{\tu \geq u} \D_{x_iy_j} \phi_\eps \int_\tu^u\int_\tu^u I_{\tr \geq r} \eta^{\prime \prime}_\delta(r-\tr)a^{ij}(x,r) d\tr dr.
\end{equs}
Similarly
\begin{equs}
&-\E \int_{t,x,y} \D_{x_iy_j} \phi_\eps \int_u^{\tu}\int_u^{\tr}  \eta^{\prime \prime}_\delta(\tr-r)a^{ij}(y,\tr) dr d \tr
\\
=&-\E \int_{\tu \geq u} \D_{x_iy_j} \phi_\eps \int_\tu^u\int_\tu^u I_{r \leq \tr} \eta^{\prime \prime}_\delta(\tr-r)a^{ij}(y,\tr) dr d\tr
\\                               \label{eq:expan2}
 &-\E \int_{\tu \leq u} \D_{x_iy_j} \phi_\eps \int_\tu^u\int_\tu^u I_{r \geq \tr} \eta^{\prime \prime}_\delta(\tr-r)a^{ij}(y,\tr) dr d\tr.
\end{equs}
By adding \eqref{eq:u,tu} and \eqref{eq:tu,u} and using \eqref{eq:expan1}, \eqref{eq:expan2} we obtain
\begin{equs}
\mathcal{A}^{(\eps, \delta)}(u, \tu)= \mathcal{A}^{(\eps, \delta)}_1(u, \tu) +\mathcal{A}^{(\eps, \delta)}_2(u, \tu),
\end{equs}
where
\begin{equs}
\mathcal{A}^{(\eps, \delta)}_1(u, \tu):=&-\E \int_{\tu \leq u} \D_{x_iy_j} \phi_\eps \int_\tu^u\int_\tu^u I_{\tr \leq r} \eta''_\delta(r-\tr)a^{ij}(x,r) d\tr dr
\\
 &-\E \int_{\tu \geq u} \D_{x_iy_j} \phi_\eps \int_\tu^u\int_\tu^u I_{\tr \geq r} \eta''_\delta(r-\tr)a^{ij}(x,r) d\tr dr
\\
&-\E \int_{\tu \geq u} \D_{x_iy_j} \phi_\eps \int_\tu^u\int_\tu^u I_{r \leq \tr} \eta^{\prime \prime}_\delta(\tr-r)a^{ij}(y,\tr) dr d\tr
\\                               \label{eq:expan3}
 &-\E \int_{\tu \leq u} \D_{x_iy_j} \phi_\eps \int_\tu^u\int_\tu^u I_{r \geq \tr} \eta^{\prime \prime}_\delta(\tr-r)a^{ij}(y,\tr) dr d\tr
\end{equs}
and 
\begin{equs}
\mathcal{A}^{(\eps, \delta)}_2(u, \tu):=&-\E \int_{t,x,y} \left( \D_{x_i} \phi_\eps \eta'_\delta(u-\tu)b^i(x,u)+ \D_{x_i} \phi_\eps \int_\tu^u \eta'_\delta(r-\tu) a^{ij}_{x_j}(x,r) \, dr \right) 
\\
&
-\E \int_{t,x,y} \left(\D_{y_i} \phi_\eps\eta'_\delta(\tu-u)b^i(y,\tu)+ \D_{y_i}\phi_\eps\int_u^\tu \eta'_\delta(\tr-u) a^{ij}_{x_j}(y,\tr) \, d \tr \right) .
\end{equs}
We further set 
\begin{equs}
\mathcal{A}^{(\eps, \delta)}_{2,1}(u, \tu)&= -\E \int_{t,x,y}\D_{x_i} \phi_\eps\eta'_\delta(u-\tu)b^i(x,u)-\E \int_{t,x,y}\D_{y_i} \phi_\eps\eta'_\delta(\tu-u)b^i(y,\tu)
\\
&=:\mathcal{A}^{(\eps, \delta)}_{2,1, 1}(u, \tu)+\mathcal{A}^{(\eps, \delta)}_{2,1, 2}(u, \tu),    \label{eq:defB1}
\\            
\mathcal{A}^{(\eps, \delta)}_{2,2}(u, \tu)&=- \E \int_{t,x,y} \left( \D_{x_i} \phi_\eps\int_\tu^u \eta'_\delta(r-\tu) a^{ij}_{x_j}(x,r) \, dr+ \D_{y_i} \phi_\eps\int_u^\tu \eta'_\delta(\tr-u) a^{ij}_{x_j}(y,\tr) \, d\tr \right) .
\\
\label{eq:defB1B2}
\end{equs}

We next estimate $\mathcal{A}^{(\eps, \delta)}_1(u, \tu)+\mathcal{B}^{(\eps, \delta)}_1(u, \tu)$.  Notice that 
\begin{equs}
\mathcal{B}^{(\eps, \delta)}_1(u, \tu)=&\E \int_{t,x,y}   \D_{y_ix_j} \phi_\eps \int_\tu^u \int_\tu^r \eta''_\delta(r-\tilde{r}) 
  \sigma^{jk}_r(x, r) \sigma^{ik}_r(y,\tilde{r})  \, d\tilde{r} d r
 \\
  =&\E \int_{ \tu \leq u} \D_{y_ix_j}\phi_\eps \int_\tu^u 
 \int_\tu ^u I_{\tr \leq r} \eta_\delta''(r-\tr)  \sigma^{jk}_r(x,r) \sigma^{ik}_r(y,\tr) \, d\tr   dr
 \\                         \label{eq:E1}
 + & \E \int_{ \tu \geq u} \D_{y_ix_j}\phi_\eps  \int_\tu^u            
 \int_\tu ^u I_{\tr \geq r} \eta_\delta''(r-\tr)   \sigma^{jk}_r(x,r)\sigma^{ik}_r(y,\tr) \, d \tr d r.
\end{equs}
By the definition of $a^{ij}$ we have that 
\begin{equs}
&a^{ij}(x,r)+a^{ij}(y, \tr)-\sigma_r^{ik}(x,r)\sigma_r^{jk}(y, \tr)
\\
= &  
\frac{1}{2}\sigma^{ik}_r(x, r)(\sigma^{jk} _r(x,r) - \sigma^{jk}_r(y, \tr)) - \frac{1}{2}\sigma^{jk}_r(y, \tr) ( \sigma^{ik}_r(x,r)-\sigma^{ik}_r(y,\tr)).
\end{equs}
Using the fact that $\D_{x_i y_j} \phi_\eps = \D_{x_jy_i}\phi_\eps$ we see that 

\begin{equs}
&\D_{x_i y_j} \phi_\eps ( a^{ij}(x,r)+a^{ij}(y, \tr)-\sigma_r^{ik}(x,r)\sigma_r^{jk}(y, \tr))
\\
=& \frac{1}{2} \D_{x_i y_j} \phi_\eps (\sigma^{ik}_r(x, r)- \sigma^{ik}_r(y, \tr))(\sigma^{jk} _r(x,r) - \sigma^{jk}_r(y, \tr))
\\                \label{eq:eps-delta}
\le &  \sum_{ij} |\D_{x_i y_j}\phi_\eps|  ( \eps + \delta^\beta)^2  \le  \sum_{ij} |\D_{x_i y_j}\phi_\eps|  ( \eps^2 + \delta^{2\beta}),  
\end{equs}
where we have used \eqref{eq:bounded-sigma_r} and \eqref{eq:regularity-sigma-2}. Consequently, by \eqref{eq:expan3}, \eqref{eq:E1}, and \eqref{eq:eps-delta} combined with the fact that 
\begin{equs}                  \label{eq:estimate-eta''}
\left| \int_\tu ^u \int_\tu ^u \eta_\delta^{\prime \prime} (r-\tr)  \, d\tr dr \right| \leq 2 |\tu -u|,
\end{equs}
we obtain 
\begin{equs}
&\mathcal{A}^{(\eps, \delta)}_1(u, \tu)+\mathcal{B}^{(\eps, \delta)}_1(u, \tu)
\\
 \le&   \ \E \int_{t,x,y} \varepsilon^ 2\sum_{ij} | \D_{x_iy_j}\phi_\eps|  |u(t,x)-\tilde{u}(t,y)|+  \E \int_{t,x,y} \delta^ {2\beta}\sum_{ij} | \D_{x_iy_j}\phi_\eps|  |u(t,x)-\tilde{u}(t,y)|
 \\                                 
 \le&   \ \E \int_{t,x,y} \varepsilon^ 2 \sum_{ij}| \D_{x_iy_j}\phi_\eps|  |u(t,x)-\tilde{u}(t,y)|+   \delta^ {2\beta}\eps^{-2}\E(\|u\|_{L_1(Q_T)}+\|\tu\|_{L_1(Q_T)}),
 \\
 \label{eq:2}
\end{equs}
where we have used Assumption \ref{as:sigma}. We proceed with an estimate for $\mathcal{A}^{(\eps, \delta)}_{2,2}(u, \tu)+\mathcal{B}^{(\eps, \delta)}_2(u, \tu)+\mathcal{B}^{(\eps, \delta)}_4(u, \tu)$. Using the fact that $\D_{x_i} \phi_\eps= -\D_{y_i} \phi_\eps$ we get
\begin{equs}
\mathcal{A}^{(\eps, \delta)}_{2,2}(u, \tu) =-& \E \int_{\tu \leq u }  \D_{x_i} \phi_\eps \int_\tu^u \int_\tu^u I_{\tr \leq r}\eta_\delta''(r-\tr)\left( a^{ij}_{x_j}(x,r)- a^{ij}_{x_j}(y,\tr)\right)\, dr d\tr
\\              \label{eq:aij1}
-& \E \int_{\tu \geq u }  \D_{x_i} \phi_\eps \int_u^\tu \int_u^\tu I_{\tr \geq r}\eta_\delta''(r-\tr)\left( a^{ij}_{x_j}(x,r)- a^{ij}_{x_j}(y,\tr)\right)\, dr d\tr.
\end{equs}
By \eqref{eq:bounded-sigma_r} and \eqref{eq:regularity-sigma-2} we have
\begin{equs}                 \label{eq:aij2}
&| a^{ij}_{x_j}(x,r)- a^{ij}_{x_j}(y,\tr)|
\le &  \  |r-\tr|^\beta+ |x-y|.
\end{equs} 
Again, using the fact that $\D_{x_i} \phi_\eps= -\D_{y_i} \phi_\eps$ and relabelling $i \leftrightarrow j$ in $\mathcal{B}^{(\eps, \delta)}_4(u, \tu)$, gives
\begin{equs}
&\mathcal{B}^{(\eps, \delta)}_2(u, \tu)+\mathcal{B}^{(\eps, \delta)}_4(u, \tu)
\\
=& \E \int_{t,x,y}   \D_{x_i}\phi_\eps \int_\tu^u \int_r^\tu \eta^{\prime \prime}_\delta(\tr-r) \sigma^{ik}_r(y,\tr)  \sigma^{jk}_{rx_j}(x,r) \,  d\tr d r 
 \\
 -& \E \int_{t,x,y}    \D_{x_i}\phi_\eps  \int_\tu^u \int_r^\tu \eta^{\prime \prime}_\delta(\tr-r) \sigma^{jk}_{ry_j}(y,\tr) \sigma^{ik}_r(x,r) \, d\tr dr 
 \\
=&  \E \int_{\tu \leq u}\int_\tu^u \int_\tu^u I_{\tr \leq r}\eta_\delta^{\prime \prime}(r-\tr)\D_{x_i} \phi_\eps \left( \sigma^{ik}_r(x,r)\sigma^{jk}_{ry_j}(y, \tr) - \sigma^{ik}_r(y,\tr)\sigma^{jk}_{rx_j}(x, r) \right) \, d \tr dr 
\\             
+&\E \int_{\tu \geq u}\int_u^\tu \int_u^\tu I_{\tr \geq r}\eta_\delta^{\prime \prime}(r-\tr)\D_{x_i} \phi_\eps \left( \sigma^{ik}_r(x,r)\sigma^{jk}_{ry_j}(y, \tr) - \sigma^{ik}_r(y,\tr)\sigma^{jk}_{rx_j}(x, r) \right) d \tr dr.
\\
  \label{eq:sigma-ij1}
\end{equs}
By  \eqref{eq:bounded-sigma_r} and \eqref{eq:regularity-sigma-2} again we have
\begin{equs}          \label{eq:sigma-ij2}
 &\left|\sigma^{ik}_r(x,r)\sigma^{jk}_{ry_j}(y, \tr) - \sigma^{ik}_r(y,\tr)\sigma^{jk}_{rx_j}(x, r) \right| 
\le \  |r-\tr|^\beta+|x-y|.
\end{equs}
By adding \eqref{eq:aij1} and \eqref{eq:sigma-ij1} and using \eqref{eq:aij2}, \eqref{eq:sigma-ij2}, and \eqref{eq:estimate-eta''}, we obtain
\begin{equs}                 
&\mathcal{A}^{(\eps, \delta)}_{2,2}(u, \tu)+\mathcal{B}^{(\eps, \delta)}_2(u, \tu)+\mathcal{B}^{(\eps, \delta)}_4(u, \tu)
\\ 
\le &  \ \E \int_{t,x,y} \delta^\beta \sum_{i} |\D_{x_i}\phi_\eps| |u(t,x)-\tilde{u}(t,y)|  + \E \int_{t,x,y} \varepsilon \sum_{i} |\D_{x_i}\phi_\eps| |u(t,x)-\tilde{u}(t,y)| \ 
\\                    
\label{eq:3}
\le &  \ \delta^\beta \eps^{-1}\E (\|u\|_{L_1(Q_T)}+\|\tu\|_{L_1(Q_T)} )  + \E \int_{t,x,y} \varepsilon \sum_{i}|\D_{x_i}\phi_\eps| |u(t,x)-\tilde{u}(t,y)|.
\end{equs}
We proceed with the estimation of $\mathcal{A}^{(\eps, \delta)}_{2,1}(u, \tu)+\mathcal{B}^{(\eps, \delta)}_3(u, \tu)+\mathcal{B}^{(\eps, \delta)}_7(u, \tu)$. Recall that $\mathcal{A}^{(\eps, \delta)}_{2,1}(u, \tu)=\mathcal{A}^{(\eps, \delta)}_{2,1,1}(u, \tu)+\mathcal{A}^{(\eps, \delta)}_{2,1,2}(u, \tu)$, see \eqref{eq:defB1}. Using the fact that $\D_{y_i} \phi_\eps = -\D_{x_i} \phi_\eps$ and the definition of $b^i$, we see that
\begin{equs}
&  \ \ \ \mathcal{B}^{(\eps, \delta)}_3(u, \tu)+\mathcal{A}^{(\eps, \delta)}_{2,1,1}(u, \tu) 
\\
&=\E \int_{t,x,y}   \D_{y_i}\phi_\eps \int_u^\tu \eta^{
\prime \prime}_\delta(\tr-u) \sigma^{ik}_r(y,\tr)  \sigma^{jk}_{x_j} (x,u)  \, d\tr  
 -\E \int_{t,x,y}\D_{x_i} \phi_\eps\eta^{\prime}_\delta(u-\tu)b^i(x,u)
\\
 &= \E \int_{t,x,y}  \D_{x_i} \phi_\eps \int_u^\tu \eta_\delta^{\prime \prime}(r-u) \sigma^{jk}_{x_j}(x,u)\left(\sigma^{ik}_r(x,u) -\sigma^{ik}_r(y,r)\right)  \, dr.
\end{equs}
Using this, \eqref{eq:regularity-sigma-2}, and 
\begin{equ}               \label{eq:est-eta''2}
\int_{\mathbb{R}}\eta''(\tr-u) \, d \tr \leq 2, 
\end{equ}
 we see that 
\begin{equs}
 &\ \ \ \  \mathcal{B}^{(\eps, \delta)}_3(u, \tu)+\mathcal{A}^{(\eps, \delta)}_{2,1,1}(u, \tu)  
 \\
& \le \delta^\beta \varepsilon^{-1}\E (1+\|u\|_{L_1(Q_T)}) + \E \int_{t,x,y}  \D_{x_i} \phi_\eps \int_u^\tu \eta_\delta^{\prime \prime}(r-u) \sigma^{jk}_{x_j}(x,u)\left(\sigma^{ik}_r(x,u) -\sigma^{ik}_r(y,u)\right)  \, dr 
\\
&= \delta^\beta \varepsilon^{-1} \E (1+\|u\|_{L_1(Q_T)})+ \E \int_{t,x,y}  \D_{x_i} \phi_\eps  \eta_\delta^{\prime}(\tu-u) \sigma^{jk}_{x_j}(x,u)\left(\sigma^{ik}_r(x,u) -\sigma^{ik}_r(y,u)\right)  
\\
&= \delta^\beta \varepsilon^{-1}\E (1+\|u\|_{L_1(Q_T)}) + \E \int_{t,x,y}  \D_{x_i} \phi_\eps \eta_\delta^{\prime}(\tu-u) \sigma^{jk}_{x_j}(x,u) (x_l-y_l)\int_0^1 \sigma^{ik}_{rx_l}(y+\theta(x-y),u) \, d\theta.
\end{equs}
Similarly,
\begin{equs}
&\mathcal{B}^{(\eps, \delta)}_7(u, \tu)+\mathcal{A}^{(\eps, \delta)}_{2,1,2}(u, \tu) 
\\
\le & \delta^\beta \varepsilon^{-1}\E (1+\|\tu\|_{L_1(Q_T)}) -\E \int_{t,x,y}  \D_{y_i} \phi_\eps  \eta_\delta^{\prime}(\tu-u) \sigma^{jk}_{x_j}(y,\tu) (y_l-x_l)\int_0^1 \sigma^{ik}_{rx_l}(x+\theta(y-x),\tu) \, d\theta.
\end{equs}
Using the relation $\D_{x_i}\phi_\eps= -\D_{y_i} \phi_\eps$, we obtain
\begin{equs}
&\mathcal{A}^{(\eps, \delta)}_{2,1}(u, \tu)+\mathcal{B}^{(\eps, \delta)}_3(u, \tu)+\mathcal{B}^{(\eps, \delta)}_7(u, \tu)
\\
\le & \delta^\beta \varepsilon^{-1}\E (1+\|u\|_{L_1(Q_T)}+\|\tu\|_{L_1(Q_T)})+ \E \int_{t,x,y}   \D_{x_i} \phi_\eps  \eta_\delta^{\prime}(\tu-u) (x_l-y_l) 
\\
& \times \left(  \sigma^{jk}_{x_j}(x,u) \int_0^1 \sigma^{ik}_{rx_l}(y+\theta(x-y),u) \, d\theta   -
\sigma^{jk}_{x_j}(y,\tu) \int_0^1 \sigma^{ik}_{rx_l}(x+\theta(y-x),\tu) \, d\theta \right) 
\\
\le&  \delta^\beta \varepsilon^{-1}\E (1+\|u\|_{L_1(Q_T)}+\|\tu\|_{L_1(Q_T)})+  \E \int_{t,x,y}   |\D_{x_i} \phi_\eps|  |x_l-y_l|
\\
& \times \left|  \sigma^{jk}_{x_j}(x,u) \int_0^1 \sigma^{ik}_{rx_l}(y+\theta(x-y),u) \, d\theta   -
\sigma^{jk}_{x_j}(y,\tu) \int_0^1 \sigma^{ik}_{rx_l}(x+\theta(y-x),\tu) \, d\theta \right|.
\\
\label{eq:E3E7B1-1}
\end{equs}
By \eqref{eq:sigma-growth} and \eqref{eq:bounded-sigma_r} we have 
\begin{equs}
&\left|  \sigma^{jk}_{x_j}(x,u) \int_0^1 \sigma^{ik}_{rx_l}(y+\theta(x-y),u) \, d\theta   -
\sigma^{jk}_{x_j}(y,\tu) \int_0^1 \sigma^{ik}_{rx_l}(x+\theta(y-x),\tu) \, d\theta \right|
\\
 \le & 
 \eps (1+|u|+|\tu|) +  \left|  \sigma^{jk}_{x_j}(x,u)  \sigma^{ik}_{rx_l}(x,u)    -
\sigma^{jk}_{x_j}(y,\tu)  \sigma^{ik}_{rx_l}(y,\tu)  \right|.  \label{eq:E3E7B1-2}
\end{equs}
By \eqref{eq:sigma-x-sigma-xr} we have 
\begin{equs}
 &\left|  \sigma^{jk}_{x_j}(x,u)  \sigma^{ik}_{rx_l}(x,u)    -
\sigma^{jk}_{x_j}(y,\tu)  \sigma^{ik}_{rx_l}(y,\tu)  \right|
\\
\le &\left|  \sigma^{jk}_{x_j}(x,u)  \sigma^{ik}_{rx_l}(x,u)    -
\sigma^{jk}_{x_j}(y,u)  \sigma^{ik}_{rx_l}(y,u)  \right| +|u-\tu|
\\
\le & \ (\varepsilon^{\bar \kappa}+ \eps )(1+|\tu|) +|u-\tu|,               \label{eq:E3E7B1-3}
\end{equs}
where for the last inequality we have used \eqref{eq:bounded-sigma_r}, \eqref{eq:sigma-growth}, and \eqref{eq:regularity-sigma-5}. Combining \eqref{eq:E3E7B1-1}, \eqref{eq:E3E7B1-2}, and \eqref{eq:E3E7B1-3}, we obtain
\begin{equs}
& \mathcal{A}^{(\eps, \delta)}_{2,1}(u, \tu)+\mathcal{B}^{(\eps, \delta)}_3(u, \tu)+\mathcal{B}^{(\eps, \delta)}_7(u, \tu)
\\
 \le &  (\delta^\beta \varepsilon^{-1}+ \eps+\eps^{\bar \kappa})\E (1+\|u\|_{L_1(Q_T)}+\|\tu\|_{L_1(Q_T)})
                           + \E \int_{t,x,y}  \eps \sum_{i} |\D_{x_i} \phi_\eps|   \left| u - \tu \right| .  
                           \\
                             \label{eq:4}
\end{equs}

We proceed with the estimation of the remaining terms. By \eqref{eq:bounded-sigma_r} and \eqref{eq:est-eta''2} we have 
\begin{equs}
\mathcal{B}^{(\eps, \delta)}_5(u,\tu)=&-\E \int_{t,x,y}  \phi_\eps \int_\tu^u \int_r^\tu \eta_\delta''(r-\tr) \sigma^{ik}_{ry_i}(y,\tr) \, dr   \sigma^{jk}_{rx_j}(x,r) \, d \zeta
\\            \label{eq:5}
\le & \  \E \int_{t,x,y}  \phi_\eps |u-\tu| .
\end{equs}
Also, 
\begin{equs}
\mathcal{B}^{(\eps, \delta)}_6(u, \tu) = &  \E \int_{t,x,y}   \phi_\eps  \int_u^\tu \eta_\delta''(r-u) \sigma^{ik}_{rx_i}(y,r) \, dr \sigma^{jk}_{x_j}(x,u)
\\
\le & \delta^ \beta\E (1+\|u\|_{L_1(Q_T)}) + \E \int _{t,x,y}  \phi_\eps  \eta_\delta'(\tu-u) \sigma^{ik}_{rx_i}(y,u)  \sigma^{jk}_{x_j}(x,u).
\end{equs}
Similarly, 
\begin{equ}
\mathcal{B}^{(\eps, \delta)}(u, \tu) \le  \delta^\beta \E (1+\|\tu\|_{L_1(Q_T)}) + \E \int_{t,x,y}    \phi_\eps \eta_\delta'(u-\tu) \sigma^{jk}_{x_j}(y,\tu)  \sigma^{ik}_{rx_i}(x,\tu).
\end{equ}
Hence, 
\begin{equs}
\mathcal{B}^{(\eps, \delta)}_6(u, \tu)+\mathcal{B}^{(\eps, \delta)}_8(u, \tu) & \le \delta^\beta \E (1+\|u\|_{L_1(Q_T)}+\|\tu\|_{L_1(Q_T)})
\\                              
& + \E \int_{t,x,y}  \phi_\eps | \sigma^{ik}_{rx_i}(y,u)  \sigma^{jk}_{x_j}(x,u)-\sigma^{jk}_{x_j}(y,\tu)  \sigma^{ik}_{rx_i}(x,\tu)|
\\
&\le (\delta^\beta+\eps+\eps^{\bar \kappa}) \E (1+\|u\|_{L_1(Q_T)}+\|\tu\|_{L_1(Q_T)})
\\                               \label{eq:6}
& + \E \int_{t,x,y}  \phi_\eps | u-\tu|.
\end{equs}
The claim follows by adding \eqref{eq:1}, \eqref{eq:2}, \eqref{eq:3}, \eqref{eq:4}, \eqref{eq:5}, and \eqref{eq:6}.
\end{proof}

\begin{lemma}\label{lem:C}
There exists a constant $N=N(N_0, N_1, d, T)$ such that for all $u, \tu \in L_1(Q_T)$ and all $\eps, \delta \in (0,1)$
\begin{equs}
\mathcal{C}^{(\eps, \delta)}(u, \tu)& \leq N (\eps^{\tilde{\beta}}+ \delta^\beta \eps^{-1})\E(1+\|u\|_{L_1(Q_T)}+\|\tu\|_{L_1(Q_T)})
\\
&+N  \E \int_{t,x,y}\left(\eps \sum_{i=1}^d|\D_{x_i}\phi_\eps| |u-\tu| +\phi_\eps |u-\tu|\right).
\end{equs}
\end{lemma}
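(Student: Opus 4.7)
The plan mirrors the strategy used in Lemma~\ref{lem:A+B}: I apply Remark~\ref{rem:intergation-limits} to shift the lower limits of the inner integrals in the pairs $T_2+T_3$ and $\tilde T_2+\tilde T_3$ from $0$ to $\tu$ on the $x$-side and to $u$ on the $y$-side (here $T_1,T_2,T_3$ denote the three $x$-side summands of $\mathcal{C}^{(\eps,\delta)}(u,\tu)$, with $T_1$ the $\eta'_\delta(u-\tu)f^i_{x_i}\phi_\eps$ term, and $\tilde T_j$ their $y$-side analogues); then I integrate by parts in the dummy variable $s$ to produce a cancellation with the $T_1$-terms, exploit $\D_{y_i}\phi_\eps=-\D_{x_i}\phi_\eps$ together with the symmetry-generating substitution $s\mapsto u+\tu-s$, and finally invoke the regularity bounds from Assumption~\ref{as:sigma}. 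After the shift, integration by parts in $s$ inside the $T_2$-integrand produces $\int_\tu^u f^i_{rx_i}(x,s)\eta'_\delta(s-\tu)\,ds=f^i_{x_i}(x,u)\eta'_\delta(u-\tu)-\int_\tu^u f^i_{x_i}(x,s)\eta''_\delta(s-\tu)\,ds$ (using $\eta'_\delta(0)=0$), so the boundary term exactly cancels $T_1$. Treating the $y$-side analogously, using $\int_u^\tu=-\int_\tu^u$ and the evenness of $\eta''_\delta$, yields
\[
(T_1+T_2)+(\tilde T_1+\tilde T_2)=\E\int_{t,x,y}\phi_\eps\int_\tu^u\big[f^i_{x_i}(x,s)\eta''_\delta(s-\tu)-f^i_{x_i}(y,s)\eta''_\delta(u-s)\big]\,ds.
\]

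Next, I split the bracket as $[f^i_{x_i}(x,s)-f^i_{x_i}(y,s)]\eta''_\delta(s-\tu)+f^i_{x_i}(y,s)[\eta''_\delta(s-\tu)-\eta''_\delta(u-s)]$. The first summand is controlled via the Hölder bound~\eqref{eq:Holder-bx-x}: $|f^i_{x_i}(x,s)-f^i_{x_i}(y,s)|\le N(1+|s|)\eps^{\tilde\beta}$ on $\supp\phi_\eps$, and integration in $s$ against $\eta''_\delta$ (of total mass at most $2$) contributes $N\eps^{\tilde\beta}(1+\E\|u\|_{L_1(Q_T)}+\E\|\tu\|_{L_1(Q_T)})$. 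For the second summand, the change of variables $s\mapsto u+\tu-s$ in the $\eta''_\delta(u-s)$-integral rewrites it as $\int_\tu^u[f^i_{x_i}(y,s)-f^i_{x_i}(y,u+\tu-s)]\rho_\delta(s-\tu)\,ds$, and the Lipschitz bound $\|f^i_{x_ir}\|_{L_\infty}\le N$ from~\eqref{eq:Lip-b} together with an elementary case split depending on whether $|u-\tu|\le\delta$ bounds this in absolute value by $N|u-\tu|$, yielding $N\E\int\phi_\eps|u-\tu|$.

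For $T_3+\tilde T_3$ (after the shift), the relation $\D_{y_i}\phi_\eps=-\D_{x_i}\phi_\eps$ combined with the substitution $s\mapsto u+\tu-s$ (using that $\eta'_\delta$ is odd) merges the two summands into
\[
T_3+\tilde T_3=-\E\int_{t,x,y}\D_{x_i}\phi_\eps\int_\tu^u\big[f^i_r(x,s)-f^i_r(y,u+\tu-s)\big]\eta'_\delta(s-\tu)\,ds,
\]
which I split as $\mathcal I_1+\mathcal I_2$ corresponding to $g_1(s):=f^i_r(x,s)-f^i_r(y,s)$ and $g_2(s):=f^i_r(y,s)-f^i_r(y,u+\tu-s)$. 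For $\mathcal I_1$, $\|f^i_{rx_l}\|_{L_\infty}\le N$ from~\eqref{eq:Lip-b} gives $|g_1|\le N\eps$ on $\supp\phi_\eps$, which combined with $\int_\tu^u|\eta'_\delta(s-\tu)|\,ds\le 2|u-\tu|$ produces exactly the targeted $N\eps\,\E\int|\D_{x_i}\phi_\eps||u-\tu|$.

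The main obstacle is $\mathcal I_2$, where the naive bound $\|f^i_r\|_{L_\infty}$ would give $|\D_{x_i}\phi_\eps||u-\tu|$, off from the target by a factor $\eps^{-1}$. The key observation is that the symmetry $s\leftrightarrow u+\tu-s$ forces $\int_\tu^u g_2(s)\,ds=0$, so $G(s):=\int_\tu^s g_2(r)\,dr$ vanishes at both endpoints of $[\tu,u]$, and integration by parts in $s$ yields $\int_\tu^u g_2(s)\eta'_\delta(s-\tu)\,ds=-\int_\tu^u G(s)\eta''_\delta(s-\tu)\,ds$. On $\supp\eta''_\delta(\cdot-\tu)$ one has $|s-\tu|\le\delta$, so $|G(s)|\le 2\|f^i_r\|_{L_\infty}|s-\tu|\le N\delta$; combined with $\int|\eta''_\delta|\le 2$ and $\int|\D_{x_i}\phi_\eps|\le N\eps^{-1}$ this yields $|\mathcal I_2|\le N\delta\eps^{-1}\le N\delta^\beta\eps^{-1}$ (using $\delta\in(0,1)$, $\beta\le 1$). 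Summing the four contributions gives the claimed bound.
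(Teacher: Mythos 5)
Your proof is correct, and while it follows the same high-level skeleton as the paper's (shift the lower integration limits via Remark~\ref{rem:intergation-limits}, exploit $\D_{y_i}\phi_\eps=-\D_{x_i}\phi_\eps$ and the symmetry in $(x,u)\leftrightarrow(y,\tu)$, then invoke \eqref{eq:Holder-b}, \eqref{eq:Holder-bx-x} and \eqref{eq:Lip-b}), the cancellation mechanisms you use are genuinely different. The paper keeps $T_1+\tilde T_1$ together and bounds it directly by $|f^i_{x_i}(x,u)-f^i_{x_i}(y,\tu)|\le N(1+|u|)\eps^{\tilde\beta}+N|u-\tu|$, and it treats the $[f^i_r,\delta]$-terms by rewriting $\eta'_\delta$ through the double-integral identity \eqref{eq:idendity-eta}, so that the difference $f^i_r(y,\tr)-f^i_r(x,r)$ appears with $|\tr-r|\le\delta$ on the support of $\eta''_\delta$ and is controlled by the H\"older-in-$r$ bound from \eqref{eq:Holder-b}, yielding $\delta^\beta+\eps$. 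You instead (a) cancel $T_1$ exactly against the boundary term produced by integrating $T_2$ by parts in the dummy variable, and (b) handle the troublesome part $\mathcal I_2$ of $T_3+\tilde T_3$ by observing that the reflection $s\mapsto u+\tu-s$ makes $g_2$ integrate to zero, so a second integration by parts localizes everything onto $\{|s-\tu|\le\delta\}$ and only $\|f^i_r\|_{L_\infty}$ is needed; this gives $\delta\eps^{-1}$ rather than $\delta^\beta\eps^{-1}$, i.e.\ a marginally sharper bound that does not use the $C^\beta$-in-$r$ regularity of $f^i_r$ for this term. Both routes land on the stated estimate; yours trades the H\"older-in-$r$ input for two exact-cancellation observations. (Two cosmetic remarks: no case split is actually needed for the bound $|2s-u-\tu|\le 2|u-\tu|$, since $s$ lies between $\tu$ and $u$; and the invariance in Remark~\ref{rem:intergation-limits} applies to the combination appearing in $\mathcal C^{(\eps,\delta)}$, which is the relevant sum, exactly as you use it.)
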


\begin{proof}
By Remark \ref{rem:intergation-limits}, \eqref{eq:idendity-eta}, and the relation $ \D_{x_i} \phi_\eps= -\D_{y_i}\phi_\eps$, we get 
\begin{equs}
 &\E \int_{t,x,y}\left(-[f^i_{rx_i}, \delta ](x,u,\tu)\phi_{\varepsilon} -[f^i_r ,\delta](x,u, \tu)\D_{x_i} \phi_{ \varepsilon}\right)  
\\
+ &\E \int_{t,x,y}\left(-[f^i_{rx_i}, \delta ](y,\tu,u)\phi_{ \varepsilon} -[f^i_r ,\delta](y,\tu,u)\D_{y_i} \phi_{ \varepsilon}\right)
\\
=& \E \int_{\tu \leq u}\D_{x_i}\phi_{\eps} \int_\tu^u \int_\tu^u I_{\tr \leq r} \eta^{\prime \prime}_\delta(\tr-r)\left( f^i_r(y,\tr)-f^i_r(x,r) \right) \, d\tr dr 
\\
+ &\E \int_{\tu \geq u}\D_{x_i}\phi_\eps \int_\tu^u \int_\tu^u I_{\tr \geq r} \eta^{\prime \prime}_\delta(\tr-r)\left( f^i_r(y,\tr)-f^i_r(x,r) \right) \, d\tr dr 
\\
- &\E \int_{t,x,y}\phi_\eps\left( \int_\tu^u \eta^{\prime}_\delta(\tu-r) f^i_{rx_i}(x,r) \, dr +\int_\tu^u \eta^{\prime}_\delta(u-\tr) f^i_{rx_i}(y,\tr) \, d\tr  \right) 
\\
\le \ & \E \int_{t,x,y}\left((\eps+\delta^\beta)\sum_{i=1}^d|\D_{x_i}\phi_\eps| |u-\tu| +\phi_\eps |u-\tu|\right) ,
\end{equs}
where for the last inequality we have used \eqref{eq:Holder-b} and  \eqref{eq:Lip-b}. Moreover, we have 
\begin{equs}
&\E \int_{t,x,y}\eta^{\prime}_\delta(u-\tu) \left( f^i_{x_i}(x,u)- f^i_{x_i}(y,\tu)\right) \phi_{\varepsilon} 
\\
\le &\  \eps^{\tilde{\beta}}(1+\E \|u\|_{L_1(Q_T)})+ \E \int_{t,x,y}|u-\tu| \phi_{\varepsilon},
\end{equs}
where we have used \eqref{eq:Lip-b} and \eqref{eq:Holder-bx-x}. Consequently, 
\begin{equs}
\mathcal{C}^{(\eps, \delta)}(u, \tu) &\le (\eps^{\tilde{\beta}}+ \delta^\beta \eps^{-1})\E(1+\|u\|_{L_1(Q_T)}+\|\tu\|_{L_1(Q_T)})
\\
&+ \E \int_{t,x,y}\left(\eps \sum_{i=1}^d|\D_{x_i}\phi_\eps| |u-\tu| +\phi_\eps |u-\tu|\right),
\end{equs}
which finishes the proof.
\end{proof}

We are now ready to proceed with the proof of Theorem \ref{thm:uniqueness}. 

\begin{proof}[Proof of Theorem \ref{thm:uniqueness}]
The majority of the proof is identical for (i) and (ii), so their separation is postponed to the very end.

We apply the entropy inequality \eqref{eq:entropy-inequality} for $u=u(t,x)$ with $\eta_\delta(\cdot-a)$
in place of $\eta$ and $\phi_{\theta,\eps}(\cdot,\cdot,s,y)$ in place of $\phi$,
for some $s\in[0,T]$, $y\in\T^d$, $a\in\bR$.
Assuming that $\theta$ is sufficiently small, one has $\phi_{\theta, \varepsilon}(0, x,s,y)=0$,
and thus we get
\begin{equs}              
-&\int_{t,x}\eta_\delta(u-a) \D_t\phi_{\theta, \varepsilon} 
 \leq 
\int_{t,x}[\fra^2, \delta](u,a) \Delta_x \phi_{\theta, \varepsilon}  
\\
&+\int_{t,x}   \left( [ a^{ij}, \delta](x,u,a) \D_{x_ix_j} \phi_{\theta, \varepsilon}  + \left( [a^{ij}_{x_j},\delta ](x,u,a) -\eta'_\delta(u-a) b^i(x,u)\right) \D_{x_i} \phi_{\theta, \varepsilon}\right)  
\\
&+ \int_{t,x}\left(\eta'(u-a) f^i_{x_i}(x,u)\phi_{\theta, \varepsilon}-[f^i_{rx_i}, \delta ](x,u,a)\phi_{\theta, \varepsilon} -[f^i_r, \delta ](x,u,a)\D_{x_i}\phi_{\theta, \varepsilon}\right)  
\\
&+\int_{t,x} \left(\frac{1}{2}\int_{t,x} \eta_\delta^{\prime \prime}(u-a)\sum_k |  \sigma^{ik}_{x_i}(x,u) |^2 \phi_{\theta, \varepsilon} -\eta_\delta^{\prime\prime}(u-a) | \nabla_x [\fra](u)|^2\phi_{\theta, \varepsilon}  
\right) 
\\   
& +\int_0^T \int_x\left(\eta^\prime(u-a)\phi  \sigma^{ik}_{x_i}(x,u) -  [\sigma^{ik}_{rx_i},\delta](x,u,a) \phi_{\theta, \varepsilon}- [\sigma^{ik}_r,\delta](x,u,a) \D_{x_i} \phi_{\theta, \varepsilon}\right)   \, d\beta^k(t),
\\
\label{eq:ineq1}
\end{equs}
Notice that all the expressions in \eqref{eq:ineq1} are continuous in $(a,s,y)$. 
We now substitute $a= \tilde{u}(s,y)$, integrate over $(s,y)$, and take expectations.
For the last term in \eqref{eq:ineq1} this is justified by \eqref{eq:simple}. All of the other terms are continuous in $a$ and can be bounded by $N(|a|^m+X)$ with some constant $N$ and some integrable random variable $X$ (recall \eqref{eq:as fra}), so that  substituting $a= \tilde{u}(s,y)$ and integrating out $s$, $y$, and $\omega$, results in finite quantities.

After writing the analogous inequality with the roles of $u,t,x$ and $\tu,s,y$ reversed,  using the symmetry of $\eta_\delta$, and adding both inequalities, one arrives at
\begin{equs}              
&-\E \int_{t,x,s,y}\eta_\delta(u-\tu) (\D_t\phi_{\theta, \varepsilon}+\D_s\phi_{\theta, \varepsilon} ) 
\\
& \leq 
\E \int_{t,x,s,y} ([\fra^2,\delta](u,\tu) \Delta_x \phi_{\theta, \varepsilon} + [\tfra^2,\delta](\tilde{u},u) \Delta_y \phi_{\theta, \varepsilon})
\\
&+ \E \int_{t,x,s,y}   \left( [ a^{ij}, \delta](x,u,\tu) \D_{x_ix_j} \phi_{\theta, \varepsilon}  + \left( [a^{ij}_{x_j},\delta ](x,u,\tu) -\eta^{\prime}_\delta(u-\tu) b^i(x,u)\right) \D_{x_i} \phi_{\theta, \varepsilon}\right)  
\\
&+ \E \int_{t,x,s,y}  \left( [ a^{ij}, \delta](y,\tu,u) \D_{y_iy_j} \phi_{\theta, \varepsilon}  + \left( [a^{ij}_{x_j},\delta ](y,\tu,u) -\eta^{\prime}_\delta(\tu-u) b^i(y,\tu)\right) \D_{y_i} \phi_{\theta, \varepsilon}\right)  
\\
&+ \E \int_{t,x,s,y}\left(\eta^{\prime}_\delta(u-\tu) f^i_{x_i}(x,u)\phi_{\theta, \varepsilon}-[f^i_{rx_i}, \delta ](x,u,\tu)\phi_{\theta, \varepsilon} -[f^i_r ,\delta](x,u, \tu)\D_{x_i} \phi_{\theta, \varepsilon}\right)  
\\
&+ \E \int_{t,x,s,y}\left(\eta^{\prime}_\delta(\tu-u) f^i_{x_i}(y,\tu)\phi_{\theta, \varepsilon}-[f^i_{rx_i}, \delta ](y,\tu,u)\phi_{\theta, \varepsilon} -[f^i_r ,\delta](y,\tu,u)\D_{y_i} \phi_{\theta, \varepsilon}\right)  
\\
&+\E \int_{t,x,s,y} \left(\frac{1}{2} \eta_\delta^{\prime \prime}(u-\tu)\sum_k | \sigma^{ik}_{x_i}(x,u) |^2 \phi_{\theta, \varepsilon} -\eta_\delta^{\prime\prime}(u-\tu) | \nabla_x [\fra](u)|^2\phi_{\theta, \varepsilon}  
\right) 
\\
&+\E \int_{t,x,s,y} \left(\frac{1}{2} \eta_\delta^{\prime \prime}(u-\tu)\sum_k |  \sigma^{ik}_{y_i}(y,\tu) |^2 \phi_{\theta, \varepsilon} -\eta_\delta^{\prime\prime}(u-\tu) | \nabla_y [\tfra](\tu)|^2\phi_{\theta, \varepsilon}  
\right) 
\\   \label{eq:F1F2}
& +\E \int_{s,y} F^1_\theta(s,y)  +\E \int_{t,x} F^2_\theta(t,x),
\end{equs}
where $u=u(t,x)$, $\tu =\tu(s,y)$, $\phi_{\theta, \eps}= \phi_{\theta, \eps}(t,x,s,y)$, and 
\begin{equs}
F^1_\theta(s,y):= \left[ \int_0^T \int_x\left(\eta^\prime_\delta(u-a)\phi_{
\theta, \eps}  \sigma^{ik}_{x_i}(x,u)-  [\sigma^{ik}_{rx_i},\delta](x,u,a) \phi_{\theta, \varepsilon} \right. \right.
\\
-\left. \left. [\sigma^{ik}_r,\delta](x,u,a) \D_{x_i} \phi_{\theta, \varepsilon}\right)   \, d\beta^k(t)\right]_{a=\tu(s,y)},
\\
F^2_\theta(t,x):=\left[ \int_0^T \int_y\left(\eta^\prime_\delta(\tu-a)\phi_{\theta, \eps}  \sigma^{ik}_{y_i}(y,\tu)- [\sigma^{ik}_{rx_i},\delta](y,\tu,a) \phi_{\theta, \varepsilon}\right. \right.
\\
- \left. \left.[\sigma^{ik}_r,\delta](y,\tu,a) \D_{x_i} \phi_{\theta, \varepsilon}\right)   \, d\beta^k(s)\right]_{a=u(t,x)}.
\end{equs} 
For the term containing $F^1_\theta$  at the right hand side of \eqref{eq:F1F2} we have the following: $\D_{x_i} \phi_{\theta, \varepsilon}$ is supported on $[s,s+\theta]$, hence the integration in $t$ is over $[s, (s+\theta)\wedge T]$. Then we plug in a quantity with is $\mathcal{F}_s$-measurable. Therefore, this term vanishes in expectation (a rigorous justification follows from a limiting procedure similar to \eqref{eq:0lambda limit}).
 We now pass to the $\theta\to0$ limit. For this, we use \cite[Proposition 3.5, see also p.15]{DareiotisGerencserGess} and the ($\star$)-property with $h= \eta'$ and $\varrho= \varrho_\eps$ to get 
\begin{equs}              
-\E \int_{t,x,y}\eta_\delta(u-\tu) \D_t\phi_\varepsilon 
 & \leq 
\E \int_{t,x,s,y} ([\fra^2,\delta](u,\tu) \Delta_x \phi_{\theta, \varepsilon} + [\tfra^2,\delta](\tilde{u},u) \Delta_y \phi_{\varepsilon})
\\
&+ \mathcal{A}^{(\eps, \delta)}(u, \tu )+ \mathcal{C}^{(\eps, \delta)}(u, \tu )
\\
&+\E \int_{t,x,y} \left(\frac{1}{2} \eta_\delta^{\prime \prime}(u-\tu)\sum_k | \sigma^{ik}_{x_i}(x,u) |^2 \phi_{ \varepsilon} -\eta_\delta^{\prime\prime}(u-\tu) | \nabla_x [\fra](u)|^2\phi_{\varepsilon}  
\right) 
\\
&+\E \int_{t,x,y} \left(\frac{1}{2} \eta_\delta^{\prime \prime}(u-\tu)\sum_k |  \sigma^{ik}_{y_i}(y,\tu) |^2 \phi_{ \varepsilon} -\eta_\delta^{\prime\prime}(u-\tu) | \nabla_y [\tfra](\tu)|^2\phi_{ \varepsilon}  
\right) 
\\          \label{eq:added-entropies}
&+ \mathcal{B}^{(\eps, \delta)}(u, \tu),
\end{equs}

   Notice that   that by \eqref{eq:regularity-sigma-2} and \eqref{eq:regularity-sigma-5} we have that for all $x,y \in \bT^d$ and $r, \tr \in \bR$ 
\begin{equs}
|\sigma^i_{x_i}(x,r)- \sigma^i_{x_i}(y,\tr)|_{l_2} \leq N |r-\tr|+N(1+|r|)|x-y|^{\bar \kappa},
 \end{equs}
 where $N$ depends only on $N_0, N_1$, and $d$.  
  Under this condition and under Assumption \ref{as:A} \eqref{as:A first} it is shown in \cite[Theorem 4.1, p.13-15, see (4.8) and (4.18) therein]{DareiotisGerencserGess} that for all $ \alpha \in (0, 1 \wedge (m/2))$ we have 
\begin{equs}
& \mathcal{B}^{(\eps, \delta)}_9(u,\tu) +\E \int_{t,x,s,y} ([\fra^2,\delta](u,\tu) \Delta_x \phi_{\theta, \varepsilon} + [\tfra^2,\delta](\tilde{u},u) \Delta_y \phi_{\varepsilon})
\\
+&\E  \int_{t,x,y} \left(\frac{1}{2} \eta_\delta^{\prime \prime}(u-\tu)\sum_k | \sigma^{ik}_{x_i}(x,u) |^2 \phi_\varepsilon -\eta_\delta^{\prime\prime}(u-\tu) | \nabla_x [\fra](u)|^2\phi_\varepsilon 
\right) 
\\
+&\E  \int_{t,x,y} \left(\frac{1}{2} \eta_\delta^{\prime \prime}(u-\tu)\sum_k |  \sigma^{ik}_{y_i}(y,\tu) |^2 \phi_\varepsilon -\eta_\delta^{\prime\prime}(u-\tu) | \nabla_y[\tfra](\tu)|^2\phi_\varepsilon 
\right)
\\
\le& \big(\delta+\eps^{2\bar\kappa}\delta^{-1}+\eps^{-2}\delta^{2\alpha}+\eps^{-2}\lambda^2)
\E(1+\|u\|^{m+1}_{L_{m+1}(Q_T)}+\|\tu\|_{L_{m+1}(Q_T)}^{m+1})
\\         \label{eq:1}
+&
\eps^{-2}\big(\E\|I_{|u|\geq R_\lambda}(1+|u|)\|_{L_m(Q_T)}^m+
\E\|I_{|\tu|\geq R_\lambda}(1+|\tu|)\|_{L_m(Q_T)}^m\big).
\end{equs}

Hence, by the above inequality, Lemma \ref{lem:A+B}, and Lemma \ref{lem:C},  we obtain for all $\eps, \delta \in (0,1)$
\begin{equs}
-&\E \int_{t,x,y}\eta_\delta(u-\tu) \D_t\phi_\varepsilon 
\\
 \le &   \  C(\eps, \delta, \lambda)\E(1+\|u\|^{m+1}_{L_{m+1}(Q_T)}+\|\tu\|_{L_{m+1}(Q_T)}^{m+1})
\\         
+ &\eps^{-2}\big(\E\|I_{|u|\geq R_\lambda}(1+|u|)\|_{L_m(Q_T)}^m+
\E\|I_{|\tu|\geq R_\lambda}(1+|\tu|)\|_{L_m(Q_T)}^m\big)
\\
+&   \ \E \int_{t,x,y} \varepsilon^ 2\sum_{ij}| \D_{x_iy_j}\phi_\eps|  |u-\tilde{u}|
+ \E \int_{t,x,y} \varepsilon \sum_{i}|\D_{x_i}\phi_\eps| |u-\tilde{u}|+  \E \int_{t,x,y} \phi_\eps |u-\tu|,
\end{equs}
with 
$$
 C(\eps, \delta, \lambda):=\big(\delta^\beta + \delta^{2\beta} \eps^{-2}+\delta^\beta \eps^{-1}+\eps^{2\bar\kappa}\delta^{-1}+\eps^{-2}\delta^{2\alpha}+\eps^{-2}\lambda^2+\eps^{\tilde{\beta}}+\eps^{\bar \kappa}),
$$
which by virtue of
\begin{equ}
\left| \E \int_{t,x,y} \eta_\delta(u-\tu) \D_t \phi_\eps -  \E \int_{t,x,y} |u-\tu| \D_t \phi_\eps\right| \le \delta,
\end{equ}
gives 
\begin{equs}
-&\E \int_{t,x,y}|u-\tu| \varrho_\epsilon \D_t\varphi
\\
& \le C(\eps, \delta, \lambda)\E(1+\|u\|^{m+1}_{L_{m+1}(Q_T)}+\|\tu\|_{L_{m+1}(Q_T)}^{m+1})
\\         
&+ \eps^{-2}\big(\E\|I_{|u|\geq R_\lambda}(1+|u|)\|_{L_m(Q_T)}^m+
\E\|I_{|\tu|\geq R_\lambda}(1+|\tu|)\|_{L_m(Q_T)}^m\big)
\\
&+   \ \E \int_{t,x,y} \varepsilon^ 2\sum_{ij}| \D_{x_iy_j}\varrho_\eps |  \varphi |u-\tilde{u}|
+ \E \int_{t,x,y} \varepsilon \sum_{i}| \D_{x_i}\varrho_\eps |  \varphi  |u-\tilde{u}|
\\       \label{eq:before eps limit}
&+  \E \int_{t,x,y} \varrho_\eps \varphi |u-\tu|.
\end{equs}
Let  $s, t \in (0,T)$, with $s < t$, be Lebesgue points of the function
$$
t \mapsto \E \int_{x,y}|u(t,x)-\tilde{u}(t,y)|\varrho_\varepsilon(x-y),
$$
and fix some $\gamma >0$ such that $\gamma< t-s$ and $t+ \gamma<T$. 
We now make use of the freedom of choosing $\varphi$:
choose in \eqref{eq:before eps limit} $\varphi =\varphi_n \in C^\infty_c((0,T))$ 
obeying the bound $\|\varphi_n\|_{L_\infty([0,T])}\vee\|\partial_t\varphi_n\|_{L_1([0,T])}\leq 1$,
such that 
$$
\lim_{n \to \infty} \|\varphi_n-\zeta\|_{{W^1_2}((0,T))} = 0,  
$$
where 
$\zeta : [0,T] \to \bR$ is such that $\zeta(0)=0$ and $\zeta'=\gamma^{-1}I_{s,s+\gamma}-\gamma^{-1}I_{t,t+\gamma}$.
After letting $n \to \infty$ we obtain 
\begin{equs}
\frac{1}{\gamma}&\E\int_t^{t+ \gamma} \int_{x,y}|u(r,x)-\tilde{u}(r,y)|\varrho_\varepsilon(x-y) \, dr 
\\
&- \frac{1}{\gamma}\E\int_s^{s+ \gamma} \int_{x,y}|u(r,x)-\tilde{u}(r,y)|\varrho_\varepsilon(x-y) \,  dr \\
& \le C(\eps, \delta, \lambda)\E(1+\|u\|^{m+1}_{L_{m+1}(Q_T)}+\|\tu\|_{L_{m+1}(Q_T)}^{m+1})
\\         
&+ \eps^{-2}\big(\E\|I_{|u|\geq R_\lambda}(1+|u|)\|_{L_m(Q_T)}^m+
\E\|I_{|\tu|\geq R_\lambda}(1+|\tu|)\|_{L_m(Q_T)}^m\big)
\\
&+   \ \E \int_0 ^{t+\gamma} \int_{x,y} \varepsilon^ 2 \sum_{ij}| \D_{x_iy_j}\varrho_\epsilon(x-y) |  |u(s,x)-\tilde{u}(s,y)| \, ds
\\            
& + \E  \int_0 ^{t+\gamma} \int_{x,y} \varepsilon \sum_{i}| \D_{x_i}\varrho_\epsilon(x-y)  |   |u(s,x)-\tilde{u}(s,y)|+ | \varrho_\epsilon(x-y)  |   |u(s,x)-\tilde{u}(s,y)| \, ds,
\\
\label{eq:with-gamma}
\end{equs}
which, after letting $\gamma \downarrow 0$,  gives
\begin{equs}
\E & \int_{x,y}|u(t,x)-\tilde{u}(t,y)|\varrho_\varepsilon(x-y) 
- \E \int_{x,y}|u(s,x)-\tilde{u}(s,y)|\varrho_\varepsilon(x-y)
 \le M,
\end{equs}
where $M$ is the right hand side of \eqref{eq:with-gamma} with $\gamma=0$
Notice that the above inequality holds for almost all $s \leq t$. After averaging over $s \in (0,\gamma)$ for some $\gamma>0$ we obtain
\begin{equs}
\E & \int_{x,y}|u(t,x)-\tilde{u}(t,y)|\varrho_\varepsilon(x-y) 
\\
&\leq M+ \frac{1}{\gamma}\E\int_0^{\gamma} \int_{x,y}|u(s,x)-\tilde{u}(s,y)|\varrho_\varepsilon(x-y)\,ds.
\end{equs}
Letting $\gamma \to 0$, we obtain by virtue of Lemma \ref{lem:initial-condition},
\begin{equs}
\E & \int_{x,y}|u(t,x)-\tilde{u}(t,y)|\varrho_\varepsilon(x-y) 
\leq M+ \E \int_{x,y}|\xi(x)-\txi(y)|\varrho_\varepsilon(x-y).\label{eq:fork}
\end{equs}

 We now prove (ii).  We integrate \eqref{eq:fork} over $t \in (0,s)$  for some $s \leq T$ and we get 
 \begin{equs}
\E & \int_0^s \int_{x,y}|u(t,x)-\tilde{u}(t,y)|\varrho_\varepsilon(x-y) \, dt 
\\             
&\le  T\E \int_{x}|\xi(x)-\txi(x)|+
T \sup_{|h|\leq\eps}\E\|\txi(\cdot)-\txi(\cdot+h)\|_{L_1(\T^d)}
\\
&+T C(\eps, \delta, \lambda)\E(1+\|u\|^{m+1}_{L_{m+1}(Q_T)}+\|\tu\|_{L_{m+1}(Q_T)}^{m+1})
\\         
&+ T \eps^{-2}\big(\E\|I_{|u|\geq R_\lambda}(1+|u|)\|_{L_m(Q_T)}^m+
\E\|I_{|\tu|\geq R_\lambda}(1+|\tu|)\|_{L_m(Q_T)}^m\big)
\\
&+   \ \E \int_0 ^s \int_0 ^{t} \int_{x,y} \varepsilon^ 2 \sum_{ij}| \D_{x_iy_j}\varrho_\epsilon(x-y) |  |u(\zeta,x)-\tilde{u}(\zeta,y)| \, d\zeta dt 
\\            
& + \E   \int_0 ^s \int_0 ^{t} \int_{x,y} \varepsilon \sum_{i}| \D_{x_i}\varrho_\epsilon(x-y)  |   |u(\zeta,x)-\tilde{u}(\zeta,y)|+ | \varrho_\epsilon(x-y)  |   |u(\zeta,x)-\tilde{u}(\zeta,y)| \, d\zeta dt.
\\
\label{eq:before-gronwal}
\end{equs}

Then, notice that for an approximation of the identity $\varrho_\eps$ we have 
\begin{equs}
\Big|\E & \int_{t,x}|u(t,x)-\tilde{u}(t,x)| -\E \int_{t,x,y}|u(t,x)-\tilde{u}(t,y)|\varrho_\varepsilon(x-y) \Big|
\\
&\leq
\E \int_{t,x,y}|u (t,x)-u(t,y)|\varrho_\varepsilon(x-y).
\end{equs}
Moreover, notice that $\eps |\D_{x_i} \varrho_\eps|$ and $\eps^2 |\D_{x_ix_j} \varrho_\eps|$ are also approximations of the identity (up to a constant). From these observations, we obtain by virtue of \eqref{eq:before-gronwal} and Lemma \ref{lem:frac reg}
\begin{equs}
\E & \int_0^s \int_{x}|u(t,x)-\tilde{u}(t,x)| \, dt 
\\             
&\le  \E \int_{x}|\xi(x)-\txi(x)|+
\sup_{|h|\leq\eps}\E\|\txi(\cdot)-\txi(\cdot+h)\|_{L_1(\T^d)}
\\
&+ C(\eps, \delta, \lambda)\E(1+\|u\|^{m+1}_{L_{m+1}(Q_T)}+\|\tu\|_{L_{m+1}(Q_T)}^{m+1})
\\         
&+  \eps^{-2}\big(\E\|I_{|u|\geq R_\lambda}(1+|u|)\|_{L_m(Q_T)}^m+
\E\|I_{|\tu|\geq R_\lambda}(1+|\tu|)\|_{L_m(Q_T)}^m\big)
\\
& +\eps^{\frac{2}{m+1}}\big(1+\E\|\nabla[\fra](u)\|_{L_1(Q_T)}\big)
 \\
 &+   \ \E \int_0 ^s \int_0 ^{t} \int_{x}   |u(\zeta,x)-\tilde{u}(\zeta,x)| \, d\zeta dt.
\end{equs}
Gronwall's lemma leads to (ii). In order to prove  (i), we choose in \eqref{eq:fork} $\lambda=0$ and  $R_\lambda=\infty$ (recall the definition of $M$) to obtain
  \begin{equs}
\E & \int_{x,y}|u(t,x)-\tilde{u}(t,y)|\varrho_\varepsilon(x-y) 
\\
 & \le\E \int_{x,y}|\xi(x)-\txi(y)|\varrho_\varepsilon(x-y)
 \\
&+ C(\eps, \delta)\E(1+\|u\|^{m+1}_{L_{m+1}(Q_T)}+\|\tu\|_{L_{m+1}(Q_T)}^{m+1})
\\         
&+   \ \E \int_0 ^t \int_{x,y} \varepsilon^ 2 \sum_{ij}| \D_{x_iy_j}\varrho_\epsilon(x-y) |  |u(s,x)-\tilde{u}(s,y)| \, ds
\\          
& + \E  \int_0 ^t \int_{x,y} \varepsilon \sum_{i} | \D_{x_i}\varrho_\epsilon(x-y)  |   |u(s,x)-\tilde{u}(s,y)|+  \varrho_\epsilon(x-y)    |u(s,x)-\tilde{u}(s,y)| \, ds,
\\
 \label{eq:ready-to-go}    
\end{equs}  
  with

\begin{equ}
C(\eps, \delta)=\big(\delta^\beta + \delta^{2\beta} \eps^{-2}+\delta^\beta \eps^{-1}+\eps^{2\bar\kappa}\delta^{-1}+\eps^{-2}\delta^{2\alpha}+\eps^{\tilde{\beta}}+\eps^{\bar \kappa}).
\end{equ}

We now choose
$\nu \in ((m\wedge2)^{-1},\bar\kappa)$ such that $2 \beta \nu >1$ (recall that $\beta \in ( 2 \bar \kappa)^{-1},1]$) and  $\alpha<1\wedge(m/2)$ such that
$-2+(2\alpha)(2\nu)>0$. Setting $\delta= \eps^{2\nu}$ then yields $C(\eps, \delta) \to 0$ as $\eps \to 0$. Consequently, by letting $\eps \to 0$ in \eqref{eq:ready-to-go} and using the continuity of translations in $L_1$ we obtain
\begin{equs}
\E& \|u(t)-\tilde{u}(t)\|_{L_1(\bT^d)}
\le \E \|\xi-\tilde{\xi}\|_{L_1(\bT^d)} + \int_0^t \E \|u(s)-\tilde{u}(s)\|_{L_1(\bT^d)} \, ds.
\end{equs}
The above relation holds for almost all $t \in [0,T]$. Hence, \eqref{eq:L_1 contraction} follows by Gronwall's lemma.

\end{proof}

\section{Approximations}

In Section \ref{Stability} we showed that if we have two entropy solutions of equation \eqref{eq:main-equation} with the same initial condition, then they coincide  provided that one of them satisfies the $(\star)$-property. Hence, in order to conclude the existence and uniqueness of entropy solutions, it suffices to show the existence of an entropy solution possessing the $(\star)$-property. 
To do so,  we use a vanishing viscosity approximation. In order to prove the strong (probabilistically) existence of solutions for the approximating equations, we use a technique from \cite{Istvan}, where a characterization of the convergence in probability  is used to show that weak existence combined with strong uniqueness implies strong existence. This has been used in the past in the context of SPDEs (see \cite{HOF2,GH18} and the references therein).
For the proof of the following Proposition see \cite[Proposition 5.1]{DareiotisGerencserGess}.

\begin{proposition}            \label{prop:Phi-n}
Let $\Phi$ satisfy Assumption \ref{as:A} (\ref{as:A first}) with a constant $K\geq 1$.
Then, for all $n$  there exists an increasing function $\Phi_n\in C^\infty(\bR)$ with bounded derivatives,
satisfying Assumption \ref{as:A} (\ref{as:A first}) with constant $3K$,
such that $\fra_n(r)\geq 2/n$, and 
\begin{equ}\label{eq:approx A}
\sup_{|r|\leq n}|\fra(r)-\fra_n(r)|\leq 4/n.
\end{equ}
\end{proposition}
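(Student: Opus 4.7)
The plan is to construct $\fra_n$ as a suitable smooth, even, strictly positive function bounded below by $2/n$ and pointwise close to $\fra$ on $[-n,n]$, and then set $\Phi_n(r):=\int_0^r\fra_n^2(s)\,ds$. A convenient choice is
\[
\Phi_n(r):=(\tilde\Phi\ast\psi_{\eps_n})(r)+(4/n^2)\,r,
\]
where $\tilde\Phi$ is a smooth odd truncation of $\Phi$ that coincides with $\Phi$ on $|r|\leq n+1$ and is extended as a linear function for $|r|\geq n+2$ (to make $\Phi_n'$ bounded), $\psi$ is an even smooth compactly supported probability density, $\psi_\eps(\cdot)=\eps^{-1}\psi(\cdot/\eps)$, and $\eps_n\downarrow 0$ is to be chosen depending on $m$ and $n$. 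Then $\Phi_n$ is smooth, odd, strictly increasing, with $\fra_n=\sqrt{\Phi_n'}=\sqrt{\fra^2\ast\psi_{\eps_n}+4/n^2}\geq 2/n$. The uniform approximation $\sup_{|r|\leq n}|\fra-\fra_n|\leq 4/n$ follows from $0\leq\sqrt{x+\delta}-\sqrt{x}\leq\sqrt{\delta}$ with $\delta=4/n^2$, together with $(\fra^2\ast\psi_{\eps_n})(r)\to\fra^2(r)$ uniformly on $[-n,n]$.

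Verification of Assumption~\ref{as:A}(\ref{as:A first}) with constant $3K$ then proceeds bound by bound. The value at zero: $|\fra_n(0)|\leq\sqrt{\fra(0)^2+O(\eps_n)+4/n^2}\leq 3K$, using $K\geq 1$. The derivative bound $|\fra_n'(r)|\leq 3K|r|^{(m-3)/2}$ for $r>0$ comes from the formula $\fra_n'=(\fra\fra'\ast\psi_{\eps_n})/\fra_n$ combined with Cauchy--Schwarz, which yields $|\fra_n'(r)|\leq K\sqrt{(|\cdot|^{m-3}\ast\psi_{\eps_n})(r)}$; away from the origin the integrand is essentially $|r|^{m-3}$ with a moderate multiplicative inflation, and near the origin the symmetry (oddness of $\fra\fra'$, evenness of $\psi_{\eps_n}$) forces $\fra_n'(0)=0$, so that the singular envelope $|r|^{(m-3)/2}$ absorbs the bound in the slow-diffusion regime $1<m<3$. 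The coercivity $3K\fra_n\geq I_{|r|\geq 1}$ follows from $K\fra\geq I_{|r|\geq 1}$ together with Jensen's inequality $\sqrt{\fra^2\ast\psi_{\eps_n}}\geq\fra\ast\psi_{\eps_n}$. For the integrated bilipschitz bounds in \eqref{eq:as Psi} one uses
\[
[\fra_n](r)-[\fra_n](\zeta)\geq ([\fra]\ast\psi_{\eps_n})(r)-([\fra]\ast\psi_{\eps_n})(\zeta)
\]
(again by Jensen) whenever $r,\zeta$ are well separated, and the trivial bound $[\fra_n](r)-[\fra_n](\zeta)\geq(2/n)|r-\zeta|$ for close pairs; choosing $\eps_n$ as a sufficiently high power of $1/n$ makes the two regimes meet and yields the bound with constant $3K$.

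The principal obstacle is the uniform preservation of the integrated bilipschitz bound $3K|[\fra_n](r)-[\fra_n](\zeta)|\geq|r-\zeta|^{(m+1)/2}$ for $|r|\vee|\zeta|<1$. The Jensen comparison with $[\fra]$ introduces an error of order $\eps_n\|\fra\|_{L^\infty}$ independent of $|r-\zeta|$, which can overwhelm the right-hand side when $|r-\zeta|$ is small because $|r-\zeta|^{(m+1)/2}\ll|r-\zeta|$ for $m>1$. The resolution is the two-regime argument sketched above, where the small-increment regime is handled by $\fra_n\geq 2/n$; matching the two regimes forces an $m$-dependent choice $\eps_n=o(1/n^{\alpha(m)})$ for some $\alpha(m)>0$. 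A secondary technicality is the derivative bound near the origin in the slow-diffusion case $1<m<3$, handled by the symmetry of $\psi_{\eps_n}$ as indicated, and the smooth truncation at $|r|\sim n+1$, which is placed in a region where coercivity already holds with slack.
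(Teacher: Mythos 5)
Your overall template (mollify an odd, linearly-truncated copy of $\Phi$ and add the term $(4/n^2)r$ so that $\fra_n=\sqrt{\fra^2\ast\psi_{\eps_n}+4/n^2}\geq 2/n$) is reasonable, and since the paper defers the proof to \cite[Proposition 5.1]{DareiotisGerencserGess} I assess your argument on its own terms. The step that fails as written is the derivative bound $|\fra_n'(r)|\leq 3K|r|^{(m-3)/2}$ near the origin. Your Cauchy--Schwarz estimate $|\fra_n'(r)|\leq K\bigl((|\cdot|^{m-3}\ast\psi_{\eps_n})(r)\bigr)^{1/2}$ is vacuous for $m\in(1,2]$: there $m-3\leq-1$, so $|s|^{m-3}$ is not locally integrable at $s=0$ and the right-hand side equals $+\infty$ for every $r$ with $0\in\supp\psi_{\eps_n}(r-\cdot)$, i.e.\ on the whole window $|r|\lesssim\eps_n$ --- precisely the slow-diffusion regime you claim to absorb. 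The symmetry observation $\fra_n'(0)=0$ gives no quantitative control for $0<r<\eps_n$. The only a priori bound available there is $|\fra_n'(r)|=|(\fra^2)\ast\psi_{\eps_n}'(r)|/(2\fra_n(r))\leq Cn\,\eps_n^{-1}$, whereas the target at $r\sim\eps_n$ is $3K\eps_n^{(m-3)/2}$; the inequality $n\eps_n^{-1}\leq C\eps_n^{(m-3)/2}$ forces $n\lesssim\eps_n^{(m-1)/2}\to 0$, a contradiction. Closing this gap requires a genuinely finer argument: use the modulus of continuity of $\fra$ implied by \eqref{eq:as fra} to gain a factor $\eps_n^{(m-1)/2}$ (or $\eps_n^{m-1}$ when $\fra$ is small near $0$) from the cancellation in $\fra^2\ast\psi_{\eps_n}'$, and balance it against a matching lower bound on $\fra_n$ near the origin coming from $\fra^2\ast\psi_{\eps_n}(0)$ rather than from the $4/n^2$ shift. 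This works only under an $m$-dependent smallness condition of the type $\eps_n\lesssim n^{-2/(m-1)}$, which you never impose for this purpose, and landing on the constant $3K$ (rather than $C(m,\psi)K$) requires additional bookkeeping.

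A secondary issue is that your diagnosis of the ``principal obstacle'' is misplaced and the proposed two-regime matching is asserted rather than carried out. The Jensen comparison does not produce an $\eps_n\|\fra\|_{L_\infty}$ error independent of $|r-\zeta|$: since the lower bounds in \eqref{eq:as Psi} hold for all pairs and the increment $|(r-t)-(\zeta-t)|=|r-\zeta|$ is translation invariant, one has $[\fra_n](r)-[\fra_n](\zeta)\geq\int\bigl([\fra](r-t)-[\fra](\zeta-t)\bigr)\psi_{\eps_n}(t)\,dt$ with each integrand already bounded below by $K^{-1}$ times a power of $|r-\zeta|$, so the small-increment regime needs no rescue from $\fra_n\geq 2/n$. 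The genuine delicacies lie elsewhere: the branch of \eqref{eq:as Psi} can switch under the shift by $t$ (e.g.\ $|r|\vee|\zeta|<1$ but $|r-t|\vee|\zeta-t|\geq1$, where the linear bound must still dominate $|r-\zeta|^{(m+1)/2}$ for $|r-\zeta|$ up to $2$, which costs a constant growing with $m$); and the coercivity $3K\fra_n\geq I_{|r|\geq1}$ does not follow from $K\fra\geq I_{|r|\geq1}$ alone when the mollification window at $|r|=1$ reaches into $\{|s|<1\}$, where the assumptions give no pointwise lower bound on $\fra$. None of these is fatal, but each needs an explicit argument with the constant tracked against $3K$, and your sketch supplies neither.
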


Let $\Phi_n$ be  as above and set 
\begin{equs}           \label{def:xi-n}
\xi_n:=(-n) \vee(\xi \wedge n).
\end{equs}

\begin{definition}
 An $L_2$-solution of equation $\Pi(\Phi_n, \xi_n)$
is a  continuous $L_2(\T^d)$-valued process $u_n$, such that 
$u_n\in L_2(\Omega_T, W^1_2(\T^d))$, $\nabla \Phi_n(u_n)\in L_2(\Omega_T, L_2(\T^d))$, and the equality
\begin{equs}
(u_n(t,\cdot),\phi)=(\xi_n,\phi)
&-\int_0^t(\nabla \Phi_n(u_n(s,\cdot)),\nabla \phi)+(a^{ij}(u)\D_{x_j}u+ b^i(u)+f^i(u), \D_{x_i} \phi) \,ds
\\
&-\int_0^t(\sigma^k(\cdot,u_n(s,\cdot)),\nabla \phi) \,d\beta^k_s\,
\end{equs}
holds for all $\phi\in C^\infty(\T^d)$, almost surely for all $t\in[0,T]$.
\end{definition}
If $u_n$ is an $L_2$-solution of $\Pi(\Phi_n, \xi_n)$, then the following estimates hold (see Lemma \ref{lem:Ap} in the Appendix)
\begin{equs}         
\label{eq:change1}     
\E \sup_{t \leq T} \| u_n\|_{L_2(\bT^d)}^p + \E \|\nabla [\fra_n](u_n) \|_{L_2(Q_T)}^p &\leq N ( 1+ \E \|\xi_n\|_{L_2(\bT^d)}^p),
\\         
\label{eq:change2}           
\E \sup_{t \leq T} \|u_n\|_{L_{m+1}(\bT^d)}^{m+1}+ \E \| \nabla \Phi_n(u_n)\|_{L_2(Q_T)}^2 &\leq N(1+\E \|\xi_n\|_{L_{m+1}(\bT^d)}^{m+1}),
\end{equs}
where the constant $N$ depends only on $N_0, N_1, K, T, d,p$ and $m$ (but not on $n\in \bN$). Notice that $|\xi_n|$ is bounded by $n$, which implies that the right hand side of the above inequalities is finite. Moreover, by construction of $\xi_n$ one concludes that for all $p\geq 2$
\begin{equs}\label{eq:uniform}
\E \sup_{t \leq T} \| u_n\|_{L_2(\bT^d)}^p + \E \|\nabla [\fra_n](u_n) \|_{L_2(Q_T)}^p &\leq N ( 1+ \E \|\xi\|_{L_2(\bT^d)}^p),
\\
\label{eq:uniform-m+1}
\E \sup_{t \leq T} \|u_n\|_{L_{m+1}(\bT^d)}^{m+1}+ \E \| \nabla \Phi_n(u_n)\|_{L_2(Q_T)}^2 &\leq N(1+\E \|\xi\|_{L_{m+1}(\bT^d)}^{m+1}).
\end{equs}
with $N$ depending only on $N_0, N_1,K, T, d,p$ and $m$.
Finally, since $\fra_n\geq 2/n>0$, we have $|\nabla u_n|\leq N(n)|\nabla[\fra_n](u_n) |$, and so by \eqref{eq:uniform}, we have the ($n$-dependent) bound
\begin{equ}\label{eq:gradient}
\E\|\nabla u_n\|_{L_2(Q_T)}^p<\infty.
\end{equ}

\begin{lemma}                  \label{lem:strong entropy}
For each $n \in \bN$, let $u_n$ be an $L_2$-solution of $\Pi(\Phi_n, \xi_n)$. Then, $u_n$ has the $(\star)$-property. If in addition $\|\xi\|_{L_2(\bT^d)}$ has moments of order 4, then the constant $N$ in \eqref{eq:strong-entropy} is independent of $n$. 
\end{lemma}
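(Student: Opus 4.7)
The strategy is to exploit the fact that $u_n$ is a classical $L_2$-solution of $\Pi(\Phi_n, \xi_n)$ with enough regularity (cf. \eqref{eq:gradient}) that It\^o's formula applies directly; this yields an \emph{equality} refinement of \eqref{eq:strong-entropy}, from which the required inequality is immediate. The plan has four steps.

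\textbf{Step 1 (Reduction).} The $L_1$-integrability of $F_\theta(\cdot,\cdot,u_n)$ follows from Lemma \ref{lem:F}, \eqref{eq:uniform}, and \eqref{eq:uniform-m+1}. By Corollary \ref{cor}(ii),
\begin{equs}
\E\int_{t,x}F_\theta(t,x,u_n(t,x))=\lim_{\lambda\to 0}\E\int_{t,x,a}F_\theta(t,x,a)\rho_\lambda(u_n(t,x)-a)\,da\,dx\,dt,
\end{equs}
so it suffices to compute the right-hand side for each $\lambda>0$ and identify its limit.

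\textbf{Step 2 (It\^o expansion in $t$).} For fixed $a,s,y$ and small $\theta$ (so that $\phi_\theta(t,x,s,y)=0$ for $s\geq t$), apply It\^o's formula to $h(\tu(s,y)-u_n(t,x))$ in the $t$-variable, using the SPDE for $u_n$:
\begin{equs}
h(\tu(s,y)-u_n(t,x))= h(\tu(s,y)-u_n(s,x))-\int_s^t h'(\tu-u_n)\,du_n(r,x)+\tfrac12\int_s^t h''(\tu-u_n)\,d\langle u_n\rangle_r.
\end{equs}
Inserting this into the first component of $F_\theta$ and computing $\E[F_\theta^{(1)}(t,x,a)\,\rho_\lambda(u_n(t,x)-a)]$ via the It\^o product rule, the martingale part $\D_{x_j}\sigma^{jk'}(x,u_n)\,d\beta^{k'}(r)$ of $u_n$ produces a cross-variation with $d\beta^k(s)$ that is nonzero only when $k=k'$ and $s=r$. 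The key observation is that because $\rho$ vanishes at $0$, the boundary term at $s=t$ drops out, and the analogous expansions apply to the components of $F_\theta$ involving $[\sigma^{ik}_{rx_i}h]$ and $[\sigma^{ik}_r h]\D_{y_i}\phi_\theta$ as well.

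\textbf{Step 3 (Matching with $\mathcal{B}$).} After integrating over $(s,y)$, over $x$ using the integration by parts that transfers the $\D_{y_i}$ (producing integrals against $\D_{y_ix_j}\phi_\theta$, $\D_{y_i}\phi_\theta$ and $\D_{x_j}\phi_\theta$), the cross-variation integrals collapse via Itô's isometry to single time integrals with integrand $\sigma^{ik}_r(y,\tu(s,y))\,\sigma^{jk}_r(x,u_n(s,x))$ times $h''$, evaluated at appropriate pairs of arguments. The pointwise boundary term $h(\tu(s,y)-u_n(s,x))$ left over from the It\^o expansion is rewritten via the fundamental theorem of calculus, $h(\tu-u_n)=-\int_{u_n}^{\tu}h'(\zeta-u_n)\,d\zeta$ and $h'(\tr-r)-h'(\tr-u)=\int_u^{r}h''(\tr-\zeta)d\zeta$, which generates exactly the nested double-integral structure $\int_{\tu}^{u_n}\!\!\int_r^{\tu} h''(\tr-r)(\cdots)\,d\tr\,dr$ appearing in $\mathcal{B}_1,\mathcal{B}_2,\mathcal{B}_4,\mathcal{B}_5$. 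Single-integral terms $\mathcal{B}_3,\mathcal{B}_6,\mathcal{B}_7,\mathcal{B}_8$ arise from the contributions where one endpoint is already $u_n(s,x)$, and $\mathcal{B}_9$ from the pure It\^o $\tfrac12 h''\,d\langle u_n\rangle$ correction together with the boundary term. Summing the nine pieces reproduces $\mathcal{B}(u_n,\tu,\theta)$ exactly.

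\textbf{Step 4 (Error control).} The remainder consists of: (a) terms coming from $\D_t\phi_\theta$ when the It\^o product rule is applied in $t$, and (b) an initial slice $t\in[0,\theta]$. By Lemma \ref{lem:F}, $\|\D_a F_\theta\|_{L_\infty}$ is bounded by $N\theta^{-\lambda}$ in $L_{m+1}(\Omega)$ for any $\lambda\in((m+3)/(2(m+1)),\mu)$; combined with the fact that the offending integrals span a time interval of length $\theta$, these errors are $\le N\theta^{1-\lambda}\le N\theta^{1-\mu}$, with a constant depending on $\E\|u_n\|_{L_{m+1}(Q_T)}^{m+1}$ and $\E\|u_n\|_{L_2(Q_T)}^4$ (which appears through $\mathcal{N}_m(u_n)$). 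When $\E\|\xi\|_{L_2(\bT^d)}^4<\infty$, the uniform bounds \eqref{eq:uniform} at $p=4$ and \eqref{eq:uniform-m+1} give constants independent of $n$, proving the last assertion. Finally sending $\lambda\to 0$ in Step 1 completes the proof.

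The main obstacle is the bookkeeping in Step 3: the algebraic identification of all nine $\mathcal{B}_l$ terms is delicate, requiring careful handling of the relabelling $i\leftrightarrow j$ and the signs that result from the many applications of integration by parts in $x$ and $y$ and of the fundamental theorem of calculus used to convert the pointwise boundary values into the nested double-integral form matching the definition of $\mathcal{B}$.
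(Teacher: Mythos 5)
Your overall strategy is the paper's: smooth the substitution $a=u_n(t,x)$ by $\rho_\lambda$ (Corollary \ref{cor}(ii)), expand the resulting quantity backwards over $[t-\theta,t]$ by It\^o's formula, read off $\mathcal{B}(u_n,\tu,\theta)$ from the cross-variation between the $d\beta^k(s)$-integral defining $F_\theta$ and the martingale part of $du_n$, and bound the drift contributions by $N\theta^{1-\mu}$ using Lemma \ref{lem:F} and \eqref{eq:whole-theta}. However, two points in your write-up are genuine defects rather than bookkeeping you can defer.

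First, you apply It\^o's formula pointwise in $x$ to a nonlinear function of $u_n(t,x)$ "using the SPDE for $u_n$". An $L_2$-solution satisfies the equation only against test functions, so this step is not justified as stated; the paper first mollifies in space, works with $u_n^{(\gamma)}=\rho_\gamma^{\otimes d}\ast u_n$, which satisfies \eqref{eq:viscous smoothed equation} pointwise, and then removes the mollification using \eqref{eq:gradient}, \eqref{eq:uniform}--\eqref{eq:uniform-m+1} and the bound \eqref{eq:0gamma limit}. This second layer of approximation (and the passage $\gamma\to0$ before $\lambda\to0$) is missing from your argument. Relatedly, the reason the value at the left endpoint can be subtracted for free is not that "$\rho$ vanishes at $0$": it is that $F_\theta(t,x,a)$ is a stochastic integral over $[t-\theta,t]$ and therefore has zero expectation against any $\cF_{t-\theta}$-measurable quantity, so $\E F\,\rho_\lambda(u_n^{(\gamma)}(t)-a)=\E F\,[\rho_\lambda(u_n^{(\gamma)}(t)-a)-\rho_\lambda(u_n^{(\gamma)}(t-\theta)-a)]$.

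Second, your identification of the nine terms of $\mathcal{B}$ is wrong where it matters. The quadratic-variation correction $\tfrac12\rho''_\lambda(u_n^{(\gamma)}-a)\sum_k|\D_{x_i}(\sigma^{ik}(x,u_n))^{(\gamma)}|^2$ does \emph{not} produce $\mathcal{B}_9$ (or any term of $\mathcal{B}$); in the paper it is the term $C^{(4)}_{\lambda,\gamma}$, which is grouped with $C^{(2)}_{\lambda,\gamma}$ and estimated as an $O(\theta^{1-\mu})$ error in \eqref{eq:C}. All nine terms of $\mathcal{B}$, including the last one, arise from the single cross-variation term $C^{(3)}_{\lambda,\gamma}$ (via It\^o's isometry, Remark \ref{rem:intergation-limits}, and integration by parts in $a$ and $x$). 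If you try to match the $d\langle u_n\rangle$ correction with $\mathcal{B}_9$ the algebra will not close. Likewise, the dominant error terms are not "terms coming from $\D_t\phi_\theta$" or "an initial slice": they are the deterministic drift contributions of the SPDE ($\Delta\Phi_n(u_n)$, the $a^{ij}\D_{x_j}u_n+b^i$ part, the $f^i$ part, and the It\^o correction), each of order $\theta^{1-\mu}$ because the time integral has length $\theta$ and $\|\D_a F_\theta\|_{L_\infty}\lesssim\theta^{-\mu}$ in the appropriate moment. With these corrections the argument coincides with the paper's; the claim of an exact "equality refinement" should also be dropped, since the drift terms are only estimated, not cancelled.
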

\begin{proof}
Fix $\theta>0$ small enough so that \eqref{eq:F rewrite} holds. To ease notation we drop the lower index in $F_\theta$.
We proceed by two approximations:  first, as in Corollary \ref{cor} (ii),  the substitution of $u_n(t,x)$ into
$F(t,x,\cdot)$ is smoothed, and second,  $u_n$ is regularised. 

For a function $f \in L_2(\bT^d)$ let  $f^{(\gamma)}:=(\rho_\gamma)^{\otimes d}\ast f$ denote   its mollification. 
Then,  $u_n^{(\gamma)}$ satisfies (pointwise) the equation
\begin{equs}
du_n^{(\gamma)}&=\Delta (\Phi_n(u_n))^{(\gamma)} + \D_{x_i}(a^{i,j}(u_n) \D_{x_j} u_n+ b^i(u_n) +f^i(u_n))^{(\gamma)}\,dt
\\    \label{eq:viscous smoothed equation}
&+\D_{x_i} (\sigma^{ik}(u_n))^{(\gamma)}\,d\beta^k(t).
\end{equs}
We note that 
\begin{equs}
& \Big|\E\int_{t,x,a} F(t,x, a) \rho_\lambda(u_n(t,x)-a)- \E\int_{t,x,a} F(t,x, a) \rho_\lambda(u_n^{(\gamma)}(t,x)-a)\Big|
\\
&=\Big|\E\int_{t,x,a}\big(F(t,x,a)-F(t,x,a+u_n^{(\gamma)}(t,x)-u_n(t,x))\big)\rho_\lambda(u_n(t,x)-a)\Big|
\\
&\leq N  \left( \E \|u_n-u_n^{(\gamma)} \|_{L_1(Q_T)}^2\right)^{1/2}  \,  \left( \E\|\D_a F\|_{L_\infty(Q_T\times\bR)}^2
\right)^{1/2}  \to 0, \label{eq:0gamma limit}
\end{equs}
as $\gamma \to 0$.
By \eqref{eq:F rewrite} we have  $\E F(t,x,a)X=0$ for any $\cF_{t-\theta}$-measurable bounded random variable $X$. Hence, 
\begin{equs}
\E &F(t,x, a) \rho_\lambda(u_n^{(\gamma)}(t,x)-a)
\\
&=\E F(t,x, a) [\rho_\lambda(u_n^{(\gamma)}(t,x)-a)-\rho_\lambda(u_n^{(\gamma)}(t-\theta,x)-a)].
\end{equs}
 By \eqref{eq:viscous smoothed equation} and It\^o's formula one has
\begin{equs}
&\int_{t,x,a} F(t,x, a) \big(\rho_\lambda (u_n^{(\gamma)} (t,x)-a)-\rho_\lambda(u_n^{(\gamma)}(t-\theta,x)-a)\big)
\\
&=
\int_{t,x,a} F(t,x, a) \int_{t-\theta}^t \rho^{\prime}_\lambda  (u_n^{(\gamma)} (s,x)-a)
 \Delta (\Phi_n(u_n))^{(\gamma)} \, ds 
 \\
 &+\int_{t,x,a} F(t,x, a) \int_{t-\theta}^t \rho^{\prime}_\lambda  (u_n^{(\gamma)} (s,x)-a) \D_{x_i}(a^{ij}(x,u_n) \D_{x_j} u_n(s,x)+ b^i(x,u_n))^{(\gamma)}
\, ds 
\\
&+
\int_{t,x,a} F(t,x, a) \int_{t-\theta}^t \rho^{\prime}_\lambda  (u_n^{(\gamma)} (s,x)-a) \D_{x_i}(\sigma^{ik}(x,u_n))^{(\gamma)} \, d\beta^k(s) 
\\
&+
\int_{t,x,a} F(t,x, a) \frac{1}{2}\int_{t-\theta}^t \rho^{\prime\prime}_\lambda  (u_n^{(\gamma)} (s,x)-a) \sum_{k=1}^\infty|\D_{x_i}(\sigma^{ik}(x,u_n))^{(\gamma)}|^2 \, ds 
\\
 &+\int_{t,x,a} F(t,x, a) \int_{t-\theta}^t \rho^{\prime}_\lambda  (u_n^{(\gamma)} (s,x)-a) (\D_{x_i}f^i(x,u_n))^{(\gamma)}
\, ds 
\\&=: C^{(1)}_{\lambda,\gamma}+C^{(2)}_{\lambda,\gamma}+C^{(3)}_{\lambda,\gamma}+C^{(4)}_{\lambda,\gamma}+C^{(5)}_{\lambda,\gamma}.        \label{eq:after Ito}
\end{equs}
By \eqref{eq:F rewrite} and integration by parts (in $x$) we have 
\begin{equs}
-C^{(1)}_{\lambda,\gamma}&= \int_{t,x,a}I_{t> \theta}\int_{t-\theta}^t  \nabla_x F(t,x, a)\rho^{\prime}_\lambda  (u_n^{(\gamma)} (s,x)-a) \cdot \nabla(\Phi_n(u_n))^{(\gamma)}
\\
&\quad+ F(t,x, a)\rho^{\prime\prime}_\lambda  (u_n^{(\gamma)} (s,x)-a)  \nabla u_n^{(\gamma)}(s,x) 
\cdot \nabla(\Phi_n(u_n))^{(\gamma)}\, ds 
\\
& =: C^{(11)}_{\lambda,\gamma}+C^{(12)}_{\lambda,\gamma} .
\end{equs}
After integration by parts with respect to $a$, by the Cauchy-Schwarz inequality, inequalities \eqref{eq:whole-theta}, \eqref{eq:change2} and Lemma \ref{lem:F}, we have
\begin{equs}
\E|C^{(11)}_{\lambda,\gamma}|
&=\E\big|\int_{t,x,a}I_{t> \theta}\int_{t-\theta}^t\nabla_x\partial_a F(t,x,a)\rho_\lambda(u_n^{(\gamma)}(s,x)-a) \cdot\nabla(\Phi_n(u_n))^{(\gamma)}\,ds\big|
\\
&\leq N\theta \left( \E\|\nabla_x\partial_a F\|^2_{L_\infty(Q_T\times\bR)} \right) ^{1/2}
\left( \E\| \nabla \Phi_n(u_n)\|^2_{L_1(Q_T)} \right) ^{1/2}
\\
&\leq N(n)\theta^{1-\mu}.                    \label{eq:A1}
\end{equs}
Similarly, this time integrating by parts twice in $a$ we have for all sufficiently small $\theta \in (0,1)$ 
\begin{equs}
\E&|C^{(12)}_{\lambda,\gamma}|
\leq
N\theta^{1-\mu}
\left( \E
\| \nabla u_n^{(\gamma)} \cdot \nabla (\Phi_n(u_n))^{(\gamma)}\|_{L_1(Q_T)}^{\frac{m+1}{m}} \right)^{\frac{m}{m+1}}.
\end{equs}
To bound the right-hand side, note that by \eqref{eq:gradient},
$\nabla u_n^{(\gamma)}\to\nabla u_n$ in $L_p(\Omega;L_2(Q_T))$, for any $p$,
and by \eqref{eq:uniform-m+1},
$\nabla (\Phi_n(u_n))^{(\gamma)}\to\nabla \Phi_n(u_n)$
 in $L_2(\Omega;L_2(Q_T))$. Therefore, by \eqref{eq:change1}
\begin{equs}
\lim_{\gamma\to 0}\E
\| \nabla u_n^{(\gamma)}\cdot\nabla (\Phi_n(u_n))^{(\gamma)}\|_{L_1(Q_T)}^{\frac{m+1}{m}}
&=\E\| \nabla u_n \cdot \nabla \Phi_n(u_n)\|_{L_1(Q_T)}^{\frac{m+1}{m}}
\\& 
=\E\|\nabla [\fra_n](u_n) \|_{L_2(Q_T)}^{\frac{2(m+1)}{m}}\leq N(n). \label{eq:A2}
\end{equs}
Together with \eqref{eq:A1}, we therefore get
\begin{equ}\label{eq:A}
\limsup_{\gamma\to 0}\E|C^{(1)}_{\lambda,\gamma}|\leq N(n) \theta^{1-\mu}.
\end{equ}
We now estimate $C^{(2)}_{\lambda,\gamma}+C^{(4)}_{\lambda,\gamma}$. After integrating by parts in $x$  we have 
\begin{equs}
&C^{(2)}_{\lambda,\gamma}+C^{(4)}_{\lambda,\gamma} =
\\
&-\int_{t,x,a} \D_{x_i}F(t,x, a) \int_{t-\theta}^t \rho^{\prime}_\lambda  (u_n^{(\gamma)} (s,x)-a) (a^{ij}(x,u_n) \D_{x_j} u_n(s,x)+ b^i(x,u_n)+f^i(x,u_n) )^{(\gamma)}
\, ds 
\\
&+
\int_{t,x,a} F(t,x, a) \frac{1}{2}\int_{t-\theta}^t \rho^{\prime\prime}_\lambda  (u_n^{(\gamma)} (s,x)-a) \sum_{k=1}^\infty|(\sigma^{ik}_{x_i}(x,u_n))^{(\gamma)}|^2 \, ds     
\\
&-\int_{t,x,a} F(t,x, a) \int_{t-\theta}^t \rho^{\prime \prime}_\lambda  (u_n^{(\gamma)} (s,x)-a)\D_{x_i}(u_n)^{(\gamma)} (a^{ij}(x,u_n) \D_{x_j} u_n(s,x)+ b^i(x,u_n))^{(\gamma)}
\, ds 
\\
&+ \int_{t,x,a} F(t,x, a) \int_{t-\theta}^t \rho^{\prime \prime}_\lambda  (u_n^{(\gamma)} (s,x)-a)\frac{1}{2}\sum_{k=1}^\infty |( \sigma^{ik}_r(x,u) \D_{x_i}u )^{(\gamma)}|^2 
\, ds
\\
&+ \int_{t,x,a} F(t,x, a) \int_{t-\theta}^t \rho^{\prime \prime}_\lambda  (u_n^{(\gamma)} (s,x)-a)\sum_{k=1}^\infty ( \sigma^{ik}_r(x,u) \D_{x_i}u )^{(\gamma)} (\sigma^{jk}_{x_j}(x,u)) ^{(\gamma)} \, st 
\end{equs}
Hence,
\begin{equs}
&\limsup_{\gamma \to 0} \E  |C^{(2)}_{\lambda,\gamma}+C^{(4)}_{\lambda,\gamma}|
\\
& \leq \E \left|\int_{t,x,a}  \D_{x_i}F(t,x, a) \int_{t-\theta}^t \rho^{\prime}_\lambda  (u_n (s,x)-a) (a^{ij}(x,u_n) \D_{x_j} u_n(s,x)+ b^i(x,u_n) )
\, ds  \right|
\\
&+                     \label{eq:how-to-call-it}
\E \left|\int_{t,x,a} \D_{aa} F(t,x, a) \frac{1}{2}\int_{t-\theta}^t \rho_\lambda  (u_n (s,x)-a) \sum_{k=1}^\infty|\sigma^{ik}_{x_i}(x,u_n)|^2 \, ds \right|.
\end{equs}
Using the identity
\begin{equs}
& \rho^{\prime}_\lambda  (u_n-a)a^{ij}(x,u_n)\D_{x_j}u_n
 \\
 = &\D_{x_j} [a^{ij} \rho^{\prime}_\lambda(\cdot - a)] (x,u_n) -  [a^{ij}_{x_j} \rho^{\prime}_\lambda(\cdot - a)] (x,u_n),
\end{equs}
integration by parts (in $x$ and $a$), as well as the linear growth of $\sigma_{x_i}, b^i$ and the boundedness of $a^{ij}, a^{ij}_{x_j}$, one derives similarly to \eqref{eq:A1} the estimate

\begin{equation}\label{eq:C}
\limsup_{\gamma \to 0} \E  |C^{(2)}_{\lambda,\gamma}+C^{(4)}_{\lambda,\gamma}| \leq N \theta^{1- \mu}  (1+\E \|u_n\|_{L_2(Q_T)}^4)^{1/2} \leq N(n)\theta^{1- \mu}.
\end{equation}
We continue with an estimate for $C^{(5)}_{\lambda,\gamma}$. We have 
\begin{equs}
\limsup_{\gamma\to 0} \E|C^{(5)}_{\lambda,\gamma}|& \leq \E \left|\int_{t,x,a} F(t,x, a) \int_{t-\theta}^t \rho^{\prime}_\lambda  (u_n-a) (f^i_r(x,u_n)\D_{x_i}u+f^i_{x_i}(u_n)) \right| 
\\
&\leq \E \left|\int_{t,x,a} \D_a F(t,x, a) \int_{t-\theta}^t \rho_\lambda  (u_n-a) (f^i_r(x,u_n)\D_{x_i}u+f^i_{x_i}(u_n)) \right| 
\\
&\leq \E \left|\int_{t,x,a} \D_{x_i} \D_a F(t,x, a) \int_{t-\theta}^t  [f^i_r \rho_\lambda  (\cdot-a)]  \right| 
\\
&+ \E \left|\int_{t,x,a} \D_a F(t,x, a) \int_{t-\theta}^t   [(f^i_{x_i}-f^i_{rx_i}) \rho_\lambda  (\cdot-a)]  \right| 
\\                  \label{eq:estC5}
&\leq N \theta^{1- \mu}  (1+\E \|u_n\|_{L_2(Q_T)}^2)^{1/2} \leq N(n)\theta^{1- \mu}
\end{equs}

Next, we estimate $C^{(3)}_{\lambda,\gamma}$. By It\^o's isometry 
\begin{equs}
& \E C^{(3)}_{\lambda,\gamma}=\E\int_{a,t,x,y}\int_{t-\theta}^t
 \left(h(\tu-a) \sigma^{ik}_{y_i}(y,\tu) \phi_\theta \right. 
 \\
 & \left. -\left(  [\sigma^{ik}_{rx_i}h(\cdot-a)](y,\tu)  \phi_\theta+[\sigma^{ik}_{r}h(\cdot-a)](y,\tu) \D_{y_i} \phi_\theta \right)\right)
\rho^{\prime}_\lambda  (u_n^{(\gamma)} (s,x)-a) \D_{x_j}(\sigma^{jk}(x,u_n))^{(\gamma)} \, ds.
\end{equs}
Using Remark \ref{rem:intergation-limits} and letting $\gamma \to 0$ gives 
\begin{equs}
\lim_{\gamma \to 0}\E C^{(3)}_{\lambda,\gamma}=&-\E \int_{a,t,x,y} \int_a^\tu h(\tr-a) \sigma^{ik}_r(y,\tr ) \, d\tr \D_{y_i} \phi_\theta  \rho_\lambda '(u_n-a) \sigma^{jk}_r(x,u_n)\D_{x_j}u_n  
 \\
 &-\E \int_{a,t,x,y} \int_a^\tu h(\tr-a) \sigma^{ik}_r(y,\tr) \, d\tr \D_{y_i} \phi_\theta \rho_\lambda' (u_n-a)  \sigma^{jk}_{x_j} (x,u_n) 
 \\
 &-\E \int_{a,t,x,y} \int_a^\tu h(\tr-a) \sigma^{ik}_{ry_i}(y,\tr) \, d\tr \phi_\theta \rho_\lambda' (u_n-a)  \sigma^{jk}_r(x,u_n) \D_{x_j}u_n 
 \\
 &-\E \int_{a,t,x,y} \int_a^\tu h(\tr-a) \sigma^{ik}_{ry_i}(y,\tr) \, d\tr\phi_\theta \rho_\lambda' (u_n-a)  \sigma^{jk}_{x_j}(x,u_n)
 \\
  &+\E \int_{a,t,x,y} h(\tu-a)\phi_\theta \sigma^{ik}_{y_i} (y,\tu) \rho_\lambda' (u_n-a) \sigma^{jk}_r(x,u_n) \D_{x_j}u_n 
  \\
   &+\E \int_{a,t,x,y}  h(\tu-a) \phi_\theta \sigma^{ik}_{y_i} (y,\tu) \rho_\lambda' (u_n-a)  \sigma^{jk}_{x_j}(x,u_n)
  \\
 =&\sum_{i=1}^6 D_i.
\end{equs}
By integration by parts  we get 
\begin{equs}D_1+D_3=& +\E \int_{a,t,x,y} \int_a^\tu h'(\tr-a) \sigma^{ik}_r(y,\tr ) \, d\tr \D_{y_i} \phi_\theta  \rho_\lambda (u_n-a) \sigma^{jk}_r(x,u_n)\D_{x_j}u_n  
 \\
 &+\E \int_{a,t,x,y} \int_a^\tu h'(\tr-a) \sigma^{ik}_{ry_i}(y,\tr) \, d\tr\phi_\theta \rho_\lambda (u_n-a)  \sigma^{jk}_r(x,u_n) \D_{x_j}u_n 
 \\
 =&-\E \int_{a,t,x,y} \D_{x_jy_i}\phi_\theta \int_a^\tu h'(\tr-a) \sigma^{ik}_r(y,\tr ) \, d\tr   \int_{\tu}^{u_n}  \rho_\lambda (r-a) \sigma^{jk}_r(x,r) dr 
  \\
 & -\E \int_{a,t,x,y} \D_{y_i}\phi_\theta \int_a^\tu h'(\tr-a) \sigma^{ik}_r(y,\tr ) \, d\tr   \int_{\tu}^{u_n}  \rho_\lambda (r-a) \sigma^{jk}_{rx_j}(x,r) dr 
 \\
&- \E \int_{a,t,x,y} 
\D_{x_j} \phi_\theta \int_a^\tu h'(\tr-a) \sigma^{ik}_{ry_i}(y,\tr) \, d\tr\int_{\tu}^{u_n} \rho_\lambda (r-a)  \sigma^{jk}_r(x,r) \, dr 
 \\
&- \E \int_{a,t,x,y} 
 \phi_\theta \int_a^\tu h'(\tr-a) \sigma^{ik}_{ry_i}(y,\tr) \, d\tr\int_{\tu}^{u_n} \rho_\lambda (r-a)  \sigma^{jk}_{rx_j}(x,r) \, dr.  
\end{equs}
Similarly
\begin{equs}D_2+D_4=& \E \int_{a,t,x,y} \int_a^\tu h'(\tr-a) \sigma^{ik}_r(y,\tr) \, d\tr \D_{y_i} \phi_\theta \rho_\lambda (u_n-a)  \sigma^{jk}_{x_j} (x,u_n) 
 \\
 + &\E \int_{a,t,x,y} \int_a^\tu h'(\tr-a) \sigma^{ik}_{ry_i}(y,\tr) \, d\tr\phi_\theta \rho_\lambda (u_n-a)  \sigma^{jk}_{x_j}(x,u_n),
\end{equs}
and 
\begin{equs}
D_5 =& -\E \int_{a,t,x,y} h'(\tu-a)\phi_\theta \sigma^{ik}_{y_i} (y,\tu) \rho_\lambda  (u_n-a) \sigma^{jk}_r(x,u_n) \D_{x_j}u_n 
\\
=& \E \int_{a,t,x,y} \D_{x_j}\phi_\theta h'(\tu-a) \sigma^{ik}_{y_i} (y,\tu) \int_{\tu}^{u_n} \rho_\lambda  (r-a) \sigma^{jk}_r(x,r) \, dr 
\\
+ & \E \int_{a,t,x,y} \phi_\theta h'(\tu-a) \sigma^{ik}_{y_i} (y,\tu) \int_{\tu}^{u_n} \rho_\lambda  (r-a) \sigma^{jk}_{rx_j}(x,r) \, dr .
\end{equs}
Hence, one easily sees that
\begin{equs}          \label{eq:C3limit-lambda}
\lim_{\lambda \to 0} \lim_{\gamma \to 0}\E C^{(3)}_{\lambda,\gamma}= \mathcal{B}(u_n, \tu, \theta),
\end{equs}
where $\mathcal{B}$ is defined in \eqref{eq:def-error}. 
Putting all of \eqref{eq:0lambda limit}, \eqref{eq:0gamma limit}, \eqref{eq:after Ito}, \eqref{eq:A}, \eqref{eq:C}, \eqref{eq:estC5}, and \eqref{eq:C3limit-lambda} together, we conclude
\begin{equs}
\E \int_{t,x} F(t,x, u_n(t,x))
&\leq
\limsup_{\lambda\to0} \limsup_{\gamma\to 0}\E|C^{(1)}_{\lambda,\gamma}|+\limsup_{\lambda\to0}\limsup_{\gamma\to 0}\E\big(
|C^{(2)}_{\lambda,\gamma}+C^{(4)}_{\lambda,\gamma}|\big)
\\
&+\limsup_{\lambda\to0} \limsup_{\gamma\to 0}\E|C^{(5)}_{\lambda,\gamma}| 
+ \lim_{\lambda\to0}\lim_{\gamma\to 0} \E C^{(3)}_{\lambda,\gamma}
\\
&\leq N(n) \theta^{1-\mu}
 + \mathcal{B}(u_n, \tu,\theta),
\end{equs}
as claimed. Moreover, if $\E\|\xi\|^4_{L_2(\bT^d)}< \infty$, then by virtue of \eqref{eq:uniform} and \eqref{eq:uniform-m+1} it is clear that in  \eqref{eq:A1}, \eqref{eq:A2}, \eqref{eq:C}, \eqref{eq:estC5} we can choose $N$ independent of $n \in \bN$, which completes the proof.
\end{proof}

\begin{proposition}                \label{prop:viscoous-well-posedenss}
Suppose Assumptions \ref{as:sigma}-\ref{as:A} hold. Then, for each $n \in \mathbb{N}$, equation $\Pi(\Phi_n, \xi_n)$  has a unique $L_2$-solution $u_n$.
\end{proposition}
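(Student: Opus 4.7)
My plan is to obtain the $L_2$-solution via a Gy\"ongy--Krylov argument~\cite{Istvan}: weak (martingale) existence together with pathwise uniqueness upgrades to strong existence and uniqueness on the original probability space. The key structural point is that for fixed $n$ the nonlinearity $\Phi_n$ is smooth with $\fra_n\geq 2/n>0$ and bounded derivatives, and $\xi_n$ is bounded; hence $\Pi(\Phi_n,\xi_n)$ is a quasilinear uniformly parabolic SPDE with semilinear gradient-type noise whose coefficients are Lipschitz in $u$.

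For pathwise uniqueness I would rely entirely on the stability theorem already proven. First, any $L_2$-solution $u_n$ is an entropy solution: applying It\^o's formula to $\int \phi\,\eta(u_n)\,dx$ for convex $\eta\in C^2(\bR)$ is rigorously justified by $\nabla u_n\in L_2(\Omega_T;L_2(\T^d))$ from \eqref{eq:gradient} and $\nabla \Phi_n(u_n)\in L_2(\Omega_T;L_2(\T^d))$ from \eqref{eq:uniform-m+1}, and reproduces exactly the identity \eqref{eq:befor-epsilon-to-zero} with $\eps=0$ and $\fra$ replaced by $\fra_n$, which is stronger than the entropy inequality \eqref{eq:entropy-inequality}. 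Second, Lemma~\ref{lem:strong entropy} supplies the $(\star)$-property. Thus if $u_n^{(1)}, u_n^{(2)}$ are two $L_2$-solutions of $\Pi(\Phi_n,\xi_n)$, Theorem~\ref{thm:uniqueness}(i) applied with $\Phi=\tilde{\Phi}=\Phi_n$ and $\xi=\tilde{\xi}=\xi_n$ yields $u_n^{(1)}=u_n^{(2)}$ almost surely.

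For weak existence I would implement a Galerkin scheme, projecting the equation onto the span of the first $M$ eigenfunctions of $-\Delta$. Since the resulting coefficients are locally Lipschitz with linear growth (by smoothness of $\Phi_n$ and Assumption~\ref{as:sigma}), the finite-dimensional SDE has a global strong solution $u_M$, and the a priori bounds behind Lemma~\ref{lem:Ap} yield $u_M$ bounded uniformly in $M$ in $L_2(\Omega;L_\infty(0,T;L_2)\cap L_2(0,T;H^1))$; combined with a fractional-in-time estimate derived from the equation, this gives tightness of the laws of $(u_M)$ in $L_2(Q_T)$ via Aubin--Lions. Skorokhod's theorem produces a subsequential almost-sure limit $u$ on a new probability space, and I would pass to the limit in the equation using the Minty monotonicity trick (exploiting $\Phi_n'\geq 4/n^2>0$) to identify the weak limit of $\nabla\Phi_n(u_M)$, with the remaining coefficients passing by continuity and bounded convergence. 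Gy\"ongy--Krylov then delivers a strong $L_2$-solution on the original space. The main obstacle I expect is the identification of the nonlinear gradient noise $\sigma^{ik}_r(u_M)\D_{x_i}u_M$ in the limit, which is a product of an a.e.-convergent bounded factor and a merely weakly convergent gradient; the resolution is to exploit the exact algebraic cancellation---already used in the motivation of entropy solutions---between the quadratic-in-$\nabla u$ part of the It\^o correction for $\|u\|_{L_2}^2$ and the drift $\D_{x_i}(a^{ij}(u)\D_{x_j}u)$, which upon inspection of the energy identity forces strong convergence of $\nabla u_M$ in $L_2(\Omega\times Q_T)$.
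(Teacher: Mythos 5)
Your overall architecture --- Galerkin approximation, uniform energy bounds, compactness/tightness, Skorokhod representation, identification of the limit, pathwise uniqueness via the entropy formulation together with the $(\star)$-property, and finally the Gy\"ongy--Krylov lemma --- coincides with the paper's, and your uniqueness argument (any $L_2$-solution is an entropy solution by It\^o's formula, has the $(\star)$-property by Lemma \ref{lem:strong entropy}, hence Theorem \ref{thm:uniqueness}~(i) applies) is exactly what the paper does. One structural caveat: to invoke Gy\"ongy--Krylov you must run the tightness/Skorokhod argument for \emph{pairs} of subsequences $(u_{l_q},u_{\bar l_q},\beta)$ and show that any joint limit is supported on the diagonal; a single subsequential limit does not suffice. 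The paper carries this out explicitly.

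The genuine problem is your proposed resolution of the limit identification. The stochastic term in the weak formulation reads $-(\sigma^k(\cdot,u_M),\nabla\phi)\,d\beta^k$: the noise is in divergence form, so after testing no gradient of $u_M$ appears, and the almost sure strong convergence $u_M\to u$ in $L_2(Q_T)$ furnished by Skorokhod, together with the Lipschitz continuity of $\sigma$ in $r$, already identifies the stochastic integrand. The same device disposes of the second-order terms: writing $a^{ij}(x,u)\D_{x_j}u=\D_{x_j}[a^{ij}](x,u)-[a^{ij}_{x_j}](x,u)$ and integrating by parts onto the smooth test function, one only needs the Lipschitz continuity of $[a^{ij}]$, $[a^{ij}_{x_j}]$ and $\Phi_n$ in $r$; this is precisely \eqref{eq:conPhi}--\eqref{eq:conbi} in the paper, and no Minty argument is required even for $\Delta\Phi_n(u)$. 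By contrast, your claim that the energy identity ``forces'' strong convergence of $\nabla u_M$ in $L_2(\Omega\times Q_T)$ is not justified and is circular as stated: converting weak convergence of the gradients into strong convergence requires convergence of their norms, which you would extract from the energy identity for the limit $u$ --- but that identity presupposes that $u$ solves the equation, which is exactly what is being established at this stage; weak lower semicontinuity alone yields only a one-sided inequality. Replacing that step by the integration-by-parts identification above removes the gap and brings your argument in line with the paper's proof.
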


\begin{proof}
We fix $n \in \mathbb{N}$, and since $n$ is fixed, in order to ease the notation we drop the $n$-dependence and we relabel $\bar \Phi:= 
\Phi_n$, $\bar \xi := \xi_n$,  ($\Phi_n$ is given in Proposition \ref{prop:Phi-n} and $\xi_n$ is given in \eqref{def:xi-n}) and we are looking for a solution $u$.   Let $(e_k)_{k=1}^\infty \subset C^\infty(\bT^d)$ be an orthonormal basis of $L_2(\bT^d)$ consisting of eigenvectors of $(I-\Delta)$, and let $\Pi_l : W^{-1}_2 \to V_l:= \text{span}\{e_1,...,e_l\}$ be the projection operator, that is, for $v \in W^{-1}_2$
$$
\Pi_l v:= \sum_{i=1}^l {}_{W^{-1}_2} \langle v, e_i \rangle_{W^1_2} e_i.
$$
Then, the Galerkin approximation 
\begin{equs}          
\begin{aligned}                 \label{eq:Galerkin}
du_l&= \Pi_l \left( \Delta \bar{\Phi}(u_l) + \D_{x_i} \left(a^{ij}(u_l)\D_{x_j} u_l+ b^i(u_l)+f^i(u_l) \right)\right)  \, dt 
\\
&+ \Pi_l\D_{x_i} \sigma^{ik}(u_l)\,  d\beta^k(t) 
\\
u(0)&= \Pi_l \bar\xi,
\end{aligned}
\end{equs}
is an equation on $V_l$ with locally Lipschitz continuous coefficients having linear growth. Consequently, it admits a unique solution $u_l$, for which we have  
$$
u_l \in  L_2(\Omega_T ; W^1_2(\bT^d))\cap L_2(\Omega; C([0,T] ; L_2(\bT^d)).
$$
After applying It\^o's formula for the function $u \mapsto \|u\|_{L_2(\bT^d)}^2$, for $p \geq 2$,  after standard arguments (see for example the proof of Lemma \ref{lem:Ap} in the Appendix) one obtains
\begin{equation}           \label{eq:est-un-H1}
 \E \int_0^T \|u_l \|^2_{W^1_2(\bT^d)} \, dt  \leq N (1+\E \|\bar \xi\|_{L_2(\bT^d)}^2),
\end{equation}
and for all $p \geq 2$ 
\begin{equation}       \label{eq:bound-L2^p}
\E \sup_{t \leq T }\|u_l(t)\|_{L_2(\bT^d)}^p \leq N (1+\E \|\bar \xi\|_{L_2(\bT^d)}^p).
\end{equation}
In  these inequalities the constant $N$ is independent of $l \in \mathbb{N}$.
In $W^{-1}_2(\bT^d)$ we have almost surely, for all $t \in [0,T]$
\begin{align*}
u_l(t)& = \Pi_l \bar\xi + \int_0^t \Pi_l\left( \Delta  \bar{\Phi}(u_l) +\D_{x_i} \left(a^{ij}(u_l)\D_{x_j} u_l+ b^i(u_l)+f^i(u) \right)\right)  \, ds
\\
&+ \int_0^t \Pi_l\D_{x_i} \sigma^{ik}(u_l)\,  d \beta^k(s)
\\
&=J^1_l+J^2_l(t)+J^3_l(t).
\end{align*}
By Sobolev's embedding theorem  and \eqref{eq:est-un-H1} combined with the boundedness of $a^{ij}$ and the linear growth of $b^i$ and $f^i$ we get 
$$
\sup_l \E \|J^2_l \|^2_{W^{1/3}_4([0,T];W^{-1}_2(\bT^d))} \leq \sup_l \E \|J^2_l \|^2_{W^1_2([0,T];W^{-1}_2(\bT^d))}  < \infty.
$$
By \cite[Lemma 2.1]{FLANDOLI}, the linear growth of $\sigma$ and \eqref{eq:bound-L2^p} we have 
$$
\sup_l \E \|J^3_l\|^p_{W^\alpha_p([0,T];W^{-1}_2(\bT^d))} < \infty
$$
for all $\alpha \in (0, 1/2)$ and $p \geq 2$. 
By these two estimates and by \eqref{eq:est-un-H1} we obtain 
$$
\sup_l \E (\|u_l\|_{W^{1/3}_4([0,T];W^{-1}_2(\bT^d)) \cap L_2([0,T]; W^1_2(\bT^d))}) < \infty.
$$
By virtue of \cite[Theorem 2.1 and Theorem 2.2]{FLANDOLI} one can easily see that the embedding 
\begin{equs}
W^{1/3}_4([0,T];W^{-1}_2(\bT^d)) &\cap L_2([0,T]; W^1_2(\bT^d))
\\
& \hookrightarrow \mathcal{X}:= L_2([0,T];L_2(\bT^d)) \cap C([0,T];W^{-2}_2(\bT^d))
\end{equs}
is compact. It follows that for any sequences $(l_q)_{q \in \bN}$, $(\bar{l}_q)_{q \in \bN}$, the laws of $(u_{l_q}, u_{\bar{l}_q})$ are tight on $\mathcal{X} \times \mathcal{X}$.
Let us set 
$$
\beta(t)= \sum_{k=1}^\infty\frac{1}{\sqrt{2^k}}\beta^k(t)\mathfrak{e}_k,
$$
where $(\mathfrak{e}_k)_{k=1}^\infty$ is the standard orthonormal basis of $l_2$.
By Prokhorov's theorem, there exists a (non-relabelled) subsequence $(u_{l_q}, u_{\bar l_q})$ such that the laws of $
(u_{l_q}, u_{\bar l_q}, \beta)$ on $\mathcal{Z}:= \mathcal{X} \times \mathcal{X} \times C([0,T]; l_2)$ are weakly convergent. By Skorohod's representation theorem, there exist $\mathcal{Z}$-valued random variables $(\hat{u}, \check{u}, \tilde{\beta})$, $(\widehat{u_{l_q}}, \widecheck{u_{\bar l_q}}, \tilde{\beta}_q)$, $q\in \bN$, on a probability space $(\tilde{\Omega}, \tilde{\mathcal{F}}, \tilde{\bP})$  such that in $\mathcal{Z}$, $\tilde{\bP}$-almost surely
\begin{equation}          \label{eq:convergence-in-Z}
(\widehat{u_{l_q}}, \widecheck{u_{\bar l_q}}, \tilde{\beta}_q)\to (\hat{u}, \check{u}, \tilde{\beta}),
\end{equation}
as $l \to \infty$, and for each $q \in \bN$, as random variables in $\mathcal{Z}$
\begin{equation}            \label{eq:distribution}
(\widehat{u_{l_q}}, \widecheck{u_{\bar l_q}}, \tilde{\beta}_q)\overset{d}{=}(u_{l_q}, u_{\bar l_q}, \beta).
\end{equation}
 Moreover, upon passing to a non-relabelled subsequene, we may assume that \begin{equation}                     \label{eq:almost-everywhere}
 (\widehat{u_{l_q}}, \widecheck{u_{\bar l_q}}) \to (\hat{u}, \check{u}), \qquad \text{for almost all $(\tilde{\omega}, t,x)$}.
  \end{equation}
Let $(\tilde{\mathcal{F}}_t)_{t \in [0,T]}$ be the augmented filtration of
$\mathcal{G}_t:= \sigma( \hat{u}(s), \check{u}(s), \tilde \beta(s); s \leq t)$, and let $\tilde{\beta}^k(t):= \sqrt{2^k}(\tilde{\beta}(t), \mathfrak{e}_k)_{l_2}$. It is easy to see that $\tilde{\beta}^k$, $k \in \bN$, are independent, standard, real-valued $\tilde{\mathcal{F}}_t$-Wiener processes.
Indeed, they are $\tilde{\mathcal{F}}_t$-adapted by definition and they are independent since $\beta^k$ are. We only have to show that they are $\tilde{\mathcal{F}}_t$-Wiener processes. Let us fix $s < t$ and let $V$ be a bounded continuous function on $C([0,s]; W^{-2}_2(\bT^d)) \times C([0,s]; W^{-2}_2(\bT^d)) \times C([0, s] ; l_2)$. For each $l \in \bN$ we have 
\begin{equs}
&\tilde{\E} (\tilde{\beta}^k_q(t)-\tilde{\beta}^k_q(s))V(\widehat{u_{l_q}}|_{[0,s]}, \widecheck{u_{\bar l_q}}|_{[0,s]}, {\tilde{\beta}_q}|_{[0,s]})
\\
=& \E (\tilde{\beta}^k(t)-\tilde{\beta}^k(s))V( u_{l_q}|_{[0,s]}, u_{\bar l_q}|_{[0,s]}, \beta|_{[0,s]})=0,
\end{equs}
which by using uniform integrability and  passing to the limit $q \to \infty$ shows that $\tilde{\beta}^k(t)$ is a $\mathcal{G}_t$-martingale.  Similarly, $|\beta^k(t)|^2-t$  is a $\mathcal{G}_t$-martingale. By continuity of $\tilde{\beta}^k(t)$ and $|\beta^k(t)|^2-t$, and the fact that their supremum in time is integrable in $\omega$, one can easily see that they are also $\tilde{\mathcal{F}}_t$-martingales. Hence, by L\'evy's characterization theorem (see, e.g., \cite[p.157, Theorem 3.16]{Kar}) $\tilde{\beta}^k$ are $\tilde{\mathcal{F}}_t$-Wiener processes.

 We now show that $\hat{u}$ and $\check{u}$ both satisfy the equation 
 \begin{equs}
dv &= \Delta \bar{\Phi}(v)  + \D_{x_i} \left(a^{ij}(x,v)\D_{x_j} v+ b^i(x,v) +f^i(x,v)\right) \, dt 
\\
&+ \D_{x_i} \sigma^{ik}(x,v)\,  d\beta^k(t) 
\end{equs}
Notice that due to \eqref{eq:est-un-H1}, we have 
$$
\hat{u} \in L_2(\tilde{\Omega}_T ; W^1_2(\bT^d)).
$$
Let us set 
\begin{equs}
\nonumber
\hat{M}(t) &:= \hat{u}(t)- \hat{u}(0)-\int_0^t  \left(\Delta \bar{\Phi}(\hat u) +\D_{x_i} \left(a^{ij}(\hat u)\D_{x_j} \hat u+ b^i(\hat u) + f^i(\hat u)\right)\right)   \, ds
\\
\nonumber
\hat{M}_q(t) &:= \widehat{u_{l_q}}(t)- \widehat{u_{l_q}}(0)-\int_0^t  \Pi_{l_q}\left(\Delta \bar{\Phi}(\widehat{u_{l_q}}) +\D_{x_i} \left(a^{ij}(\widehat{u_{l_q}})\D_{x_j} \widehat{u_{l_q}}+ b^i(\widehat{u_{l_q}})+ f^i(\widehat{u_{l_q}}) \right)\right)  \, ds
\\
M_q(t) &:= u_{l_q}(t)- u_{l_q}(0)-\int_0^t   \Pi_{l_q}\left(\Delta \bar{\Phi}(u_{l_q}) +\D_{x_i} \left(a^{ij}(u_{l_q})\D_{x_j} u_{l_q}+ b^i(u_{l_q})+ f^i(u_{l_q}) \right)\right)  \, ds.
\end{equs}
We will show that for any $\phi \in W^{-2}_2(\bT^d)$ and $k \in \bN$, the processes 
\begin{equs}
\hat M^1(t)&:= (\hat M(t), \phi)_{W^{-2}_2(\bT^d)},
\\
 \hat M^2(t)&:= (\hat M(t), \phi)^2_{W^{-2}_2(\bT^d)}-\int_0^t \sum_{k=1}^\infty | (\D_{x_i}\sigma^{ik}(\hat{u}), \phi)_{W^{-2}_2(\bT^d)}|^2 \, ds,
\end{equs}
and 
$$
\hat M^{3,k}(t):=\tilde{\beta}^k(t)(\hat M(t), \phi)_{W^{-2}_2(\bT^d)}- \int_0^t (\D_{x_i}\sigma^{ik}(\hat{u}), \phi)_{W^{-2}_2(\bT^d)} \, ds
$$
are continuous $\tilde{\mathcal{F}}_t$-martingales. We first show that they are continuous $\mathcal{G}_t$-martingales. Assume for now that $\phi = (I-\Delta)^2\psi$, where $\psi \in V_{l_q}$. For, $i=1,2,3$, let us also define the processes $\hat M^i_q, M^i_q$ similarly to $\hat M^i$, but with $\hat M$, $\hat u$, $\D_{x_i} \sigma^{ki}(\cdot)$ replaced by $\hat M_q, \widehat{u_{l_q}}$, $\Pi_{l_q}\D_{x_i} \sigma^{ik}(\cdot)$ and $M_q, u_{l_q}$, $\Pi_{l_q}\D_{x_i} \sigma^{ik}(\cdot)$, respectively.   Let us fix $s < t$ and let $V$ be a bounded continuous function on $C([0,s]; W^{-2}_2(\bT^d)) \times C([0, s] ; l_2)$. We have that 
\begin{equs}
(M_q(t), \phi)_{W^{-2}_2(\bT^d)}&= \int_0^t (\Pi_{l_q}\D_{x_i}\sigma^{ik}(u_{l_q}), \phi)_{W^{-2}_2(\bT^d)} \, d \beta^k(s).
\end{equs}
It follows that $M^i_q$ are continuous $\mathcal{F}_t$-martingales. Hence, 
$$
\E V(u_{l_q}|_{[0,s]},u_{\bar l_q}|_{[0,s]}, \beta|_{[0,s]})(M_q^i(t)-M_q^i(s))=0,
$$
which combined with \eqref{eq:distribution} gives 
\begin{equation}                \label{eq:martingale-Ml}
\tilde{\E}V(\widehat{u_{l_q}}|_{[0,s]}, \widecheck{u_{\bar l_q}}|_{[0,s]},{\tilde{\beta}_q}|_{[0,s]}) (\hat M_q^i(t)-\hat M_q^i(s)) =0.
\end{equation}
Next, notice that 
\begin{equs}
 \tilde{\E} \int_0^T\left| \left( \Pi_{l_q}\Delta \bar{\Phi}(\widehat{u_{l_q}})- \Delta \bar{\Phi}(\hat u), \phi \right)_{W^{-2}_2(\bT^d)} \right| \,   dt 
=&  \tilde{\E} \int_0^T \left| \left(\bar{\Phi}(\widehat{u_{l_q}})- \bar{\Phi}(\hat{u}), \Delta \psi \right)_{L_2(\bT^d)} \, \right| \, dt  
\\
\le &  \tilde{\E} \int_0^T \| \hat{u}- \widehat{u_{l_q}}\|_{L_2(\bT^d)} \to 0,         \label{eq:conPhi}
\end{equs}
where the convergence follows from  \eqref{eq:convergence-in-Z} and the fact that 
$$
\int_0^T \|\widehat{u_{l_q}}-\hat{u}\|_{L_2(\bT^d)}\, dt 
$$  
are uniformly integrable on $\Omega$ (which in turn follows from \eqref{eq:est-un-H1}).
Notice also that for $v \in W^1_2(\bT^d)$ we have 
\begin{equs}
\left(\Pi_{l_q} \D_{x_j}(a^{ij}(v) \D_{x_i}v ) , \phi\right)_{W^{-2}_2(\bT^d)}&= -\left(a^{ij}(v) \D_{x_i}v  , \D_{x_j} \psi\right)_{L_2(\bT^d)}
\\
& = \left( [a^{ij}](v) , \D_{ij}\psi \right)_{L_2(\bT^d)}+\left(  [a^{ij}_{x_i}](v) , \D_{j}\psi \right)_{L_2(\bT^d)}.
\end{equs}
Since $[a^{ij}](u)(x,r) , [a^{ij}_{x_i}](x,r)$ are Lipschitz continuous in $r \in \bR$ uniformly in $x$ (by Assumption \ref{as:sigma}), we get 
\begin{equs}
 & \tilde{\E} \int_0^T\left| \Pi_{l_q}\left(\D_{x_j}(a^{ij}(\widehat{u_{l_q}}) \D_{x_i}\widehat{u_{l_q}})- \D_{x_j}(a^{ij}(\hat{u}) \D_{x_i}\hat{u} ),  \phi \right)_{W^{-2}_2(\bT^d)} \right| \,   dt  
 \\
  \le  & \tilde{\E} \int_0^T \| \hat{u}- \hat u_l\|_{L_2(\bT^d)} \to 0.            \label{eq:conaij}
\end{equs}
Similarly one shows that 
\begin{equs}
 & \tilde{\E} \int_0^T\left| \left( \Pi_{l_q} \D_{x_i} (b^i(\widehat{u_{l_q}})+f^i(\widehat{u_{l_q}}))- \D_{x_i}( b^{i}(\hat{u})+f^{i}(\hat{u})) ,  \phi \right)_{W^{-2}_2(\bT^d)} \right| \,   dt   \to 0.       \label{eq:conbi}
\end{equs}
Hence, by \eqref{eq:conPhi}, \eqref{eq:conaij}, \eqref{eq:conbi}, and \eqref{eq:convergence-in-Z} we see that for each $t \in [0,T]$
\begin{equation}         \label{eq:MltoM-in-probability}
(\hat M_q(t), \phi)_{W^{-2}_2(\bT^d)} \to (\hat M(t), \phi)_{W^{-2}_2(\bT^d)}
\end{equation} 
in probability. Then, one can easily verify that
$\hat{M}^i_q(t) \to \hat M^i(t)$ in probability. Moreover, for any $\phi \in W^{-2}_2(\bT^d)$ and any $p \geq 2$ we have, by \eqref{eq:distribution} and  \eqref{eq:bound-L2^p}
\begin{equs}
\nonumber
\sup_q \tilde{\E}| (\hat{M}_q(t), \phi)_{W^{-2}_2(\bT^d)}|^p&= \sup_q \E \left|   \int_0^t (\Pi_{l_q} \D_{x_i}\sigma^{ik}(u_{l_q}), \phi)_{W^{-2}_2(\bT^d)} \, \beta^k(s)\right|^p
\\
&\le \|\phi\|_{W^{-2}_2(\bT^d)}^p \E( 1+ \|\bar\xi\|_{L_2(\bT^d)}^p).
\end{equs}
From this, one easily deduces that for each $i=1,2,3$, and  $t\in [0,T]$, $M^i_q(t)$ are uniformly integrable. Hence, we can pass to the limit in \eqref{eq:martingale-Ml} to obtain
\begin{equation}       \label{eq:martingale-property}
\tilde{\E}V(\hat u|_{[0,s]},\check u|_{[0,s]} {\tilde{\beta}}|_{[0,s]})(\hat M^i(t)-\hat M^i(s)) =0.
\end{equation}
In addition, using the continuity of $\hat M^i(t)$ in $\phi$, uniform integrability, and the fact that $\cup_q (I+\Delta)^2 V_{l_q}$ is dense in $W^{-2}_2(\bT^d)$, it follows that  \eqref{eq:martingale-property} holds also for all $\phi \in W^{-2}_2(\bT^d)$. Hence, for all $\phi \in W^{-2}_2(\bT^d)$, $\hat{M}^i$ are continuous $\mathcal{G}_t$-martingales having all moments finite. In particular, by Doob's maximal inequality, they are uniformly integrable (in $t$), which combined with continuity (in $t$) implies that they are also $\tilde{\mathcal{F}}_t$-martingales.  By \cite[Proposition A.1]{HOF2} we obtain that  almost surely, for all $\phi \in W^{-2}_2(\bT^d)$,  $t \in [0,T]$
\begin{equs}
\nonumber
(\hat{u}(t), \phi)_{W^{-2}_2(\bT^d)}&= (\hat{u}(0), \phi)_{W^{-2}_2(\bT^d)}+ \int_0^t (\D_{x_i}\sigma^{ik}(\hat u), \phi)_{W^{-2}_2(\bT^d)} \, d \tilde{\beta}^k(s)
\\
&+\int_0^t (\Delta \bar{\Phi}(\hat{u})+ \D_{x_i}(a^{ij}(\hat{u}) \D_{x_j}\hat{u} +b^i(u)+f^i(u)), \phi)_{W^{-2}_2(\bT^d)} \, ds.
\end{equs}
Notice that $\hat{u}(0)\overset{d}{=} \bar \xi$, which implies that $\hat{u}(0) \in L_{m+1}(\bT^d)$ almost surely.
Choosing $\phi =(1+\Delta)^2 \psi$ for $\psi \in C^\infty(\bT^d)$, we obtain that for almost all $(\tilde{\omega}, t)$
\begin{equs}
\nonumber
(\hat{u}(t), \psi)_{L_2(\bT^d)}&= (\hat{u}(0), \psi)_{L_2(\bT^d)}-\int_0^t  \left(\D_{x_i}\bar{\Phi}(\hat{u})+a^{ij}(\hat u )\D_{x_j}u+ b^i(\hat u)+f^i(\hat{u}) , \D_{x_i} \psi\right)_{L_2(\bT^d)} \, ds
\\
&- \int_0^t ( \sigma^{ik}(\hat u) , \D_{x_i} \psi)_{L_2(\bT^d)} \, d \tilde{\beta}^k(s).
\end{equs}
If follows (see \cite{KR79}) that $\hat{u}$ is a continuous $L_2(\bT^d)$-valued  $\mathcal{F}_t$-adapted process. Hence, $\hat u$ is an $L_2$-solution of equation  $\Pi(\bar \Phi, \hat \xi )$ (on $(\tilde{\Omega}, (\tilde{F}_t)_t , \tilde{\bP})$ with driving noise $(\tilde{\beta}^k)_{k=1}^\infty$) where $\hat{\xi}:= \hat{u} (0)$. Again, by standard arguments, for all $p \geq 2$ one has the estimate
\begin{equs}
\E \sup_{t \leq T }\|\hat u(t)\|_{L_p(\bT^d)}^p+\E \int_0^T \int_{\bT^d}|\hat u|^{p-2} |\nabla \hat u|^2 \, dx dt  \leq N (1+\E \|\bar \xi\|_{L_2(\bT^d)}^p).
\end{equs}
Using this and It\^o's formula (see, e.g., \cite{K_Ito}) for the function 
$$
u \mapsto \int_x \eta(u)\varrho,
$$
 and It\^o's product rule, one can see that $\hat u$ is an entropy solution (on $(\tilde{\Omega}, (\tilde{F}_t)_t , \tilde{\bP})$ with driving noise $(\tilde{\beta}^k)_{k=1}^\infty$) with initial condition $\hat{\xi}:= \hat{u} (0)$.  
 In the exact same way $\check{u}$ is an $L_2$-solution and an entropy solution of $\Pi( \bar \Phi, \check \xi)$ (again, on $(\tilde{\Omega}, (\tilde{F}_t)_t , \tilde{\bP})$ with driving noise $(\tilde{\beta}^k)_{k=1}^\infty$) with $\check \xi:= \check u (0)$. Further, we have for $\delta>0$
\begin{equs}
\tilde \bP(\| \hat \xi - \check \xi \|_{W^{-2}_2(\bT^d)}> \delta) & \leq \delta^{-1} \tilde \E \|\hat \xi - \check \xi \|_{W^{-2}_2(\bT^d)}
\\
& \leq \liminf_{q \to \infty} \delta^{-1} \tilde \E \|\widehat{u_{l_q}} (0) - \widecheck{u_{\bar l_q}}(0)\|_{W^{-2}_2(\bT^d)} 
\\
&= \liminf_{q \to \infty} \delta^{-1}  \E \|\Pi_{l_q}\bar \xi - \Pi_{\bar l_q}\bar \xi\|_{W^{-2}_2(\bT^d)} =0.
\end{equs}
 Hence $\hat{u}$ and 
$\check{u}$ are both entropy solutions with the same initial condition. Moreover, by Lemma \ref{lem:strong entropy} they have the $(\star)$-property. Hence, by Theorem \ref{thm:uniqueness} we conclude that $\hat{u}= \check{u}$. By \cite[Lemma 1.1]{Istvan} we have that the initial sequence 
$(u_l)_{l=1}^\infty$ converges in probability in $\mathcal{X}$ to some $u \in \mathcal{X}$. Using this convergence and the uniform estimates on $u_l$, it  is then straight-forward to pass to the limit in 
\eqref{eq:Galerkin} and to see that the limit $u$ is indeed an $L_2$-solution.
\end{proof}

We are ready to proceed with the proof of Theorem \ref{thm:main-theorem}.

\section{Proof of the main theorem}

\begin{proof}[Proof of Theorem \ref{thm:main-theorem}]

\emph{Step 1:} As a first step we prove the existence of a solution having the $(\star)$-property under the auxiliary assumption that 
$\E \|\xi\|_{L_2(\bT^d)}^4< \infty$. Let $u_n$ be the solutions of $\Pi(\Phi_n, \xi_n)$ constructed in Proposition \ref{prop:viscoous-well-posedenss}. 
 Based on Theorem \ref{thm:uniqueness} \eqref{it:super-inequality},  we will show that $(u_n)_{n\in\N}$ is a Cauchy sequence in $L_1(\Omega_T;L_1(\T^d))$.
Let $\eps_0>0$, 
$\nu \in ((m\wedge2)^{-1},\bar\kappa)$ such that $2\beta \nu>1$ and $\alpha<1\wedge(m/2)$ such that
$-2+(2\alpha)(2\nu)>0$, $\eps \in (0,1)$, $\delta=\eps^{2\nu}$, $n\leq n'$, 
 and $\lambda=8/n$. Thanks to \eqref{eq:approx A}, we have that $R_\lambda\geq n$.
Recalling the uniform estimates \eqref{eq:uniform}, and the triangle inequality
\begin{equ}
\E\|\xi_{n'}(\cdot)-\xi_{n'}(\cdot+h)\|_{L_1(\T^d)}\leq
\E\|\xi(\cdot)-\xi(\cdot+h)\|_{L_1(\T^d)}+2\E\|\xi-\xi_{n'}\|_{L_1(\T^d)},
\end{equ}
the right-hand side of \eqref{eq:super inequality} (with $u=u_n, \tu=u_{n'}$) is bounded by
\begin{equs}
M(\eps)&+N\E\|\xi-\xi_{n'}\|_{L_1(\T^d)}+N \E\|\xi-\xi_{n}\|_{L_1(\T^d)}+ N\eps^{-2}n^{-2}
\\
&+N\eps^{-2}\E\big(\|I_{|u_n|\geq n}(1+|u_n|)\|_{L_m(Q_T)}^m+
\|I_{|u_{n'}|\geq n}(1+|u_{n'}|)\|_{L_m(Q_T)}^m\big),
\end{equs}
where $M(\eps)\to 0$ as $\eps\to 0$. Choose $\eps>0$  such that $M(\eps)\leq\eps_0$.
Then, we can choose $n_0$ sufficiently large so that for $n_0\leq n\leq n'$ 
we have 
\begin{equs}
N\E\|\xi-\xi_{n'}\|_{L_1(\T^d)}+N \E\|\xi-\xi_{n}\|_{L_1(\T^d)}+ N\eps^{-2}n^{-2}\leq \eps_0.
\end{equs}
The same is true for the term
\begin{equs}
N\eps^{-2}\E\big(\|I_{|u_n|\geq n}(1+|u_n|)\|_{L_m(Q_T)}^m+
\|I_{|u_{n'}|\geq n}(1+|u_{n'}|)\|_{L_m(Q_T)}^m\big),
\end{equs}
 thanks to the uniform integrability (in $(\omega,t,x)$) of $1+|u_n|^m$, which follows from \eqref{eq:uniform-m+1}. Hence, for $n_0\leq n\leq n'$, one has
\begin{equ}
\E \int_{t,x} |u_n(t,x)-u_{n'}(t,x)|\leq  3 \eps_0.
\end{equ}
Therefore, since $\eps_0>0$ was arbitrary, $(u_n)_{n\in\N}$ converges in $L_1(\Omega_T; L_1(\T^d))$ to a limit $u$.
 Moreover, by passing to a subsequence, we may also assume that 
 \begin{equation}               \label{eq:ae-convergence}
 \lim_{n \to \infty}u_n=u, \qquad \text{for almost all} \qquad (\omega, t, x)\in \Omega_T \times \bT^d. 
 \end{equation}
Consequently, by Lemma \ref{lem:strong entropy}, \eqref{eq:uniform-m+1},  and Corollary \ref{cor} (i), $u$ has the $(\star)$-property. 
In addition, it follows by \eqref{eq:uniform-m+1} that  for  any $q < m+1$, 
\begin{equation}            \label{eq:uniform-integrability}
(|u_n(t,x)|^q)_{n=1}^\infty \ \text{is uniformly integrable on $\Omega_T \times \bT^d$}.
\end{equation} 

We now show that $u$ is an entropy solution.  From now on, when we refer to the estimates \eqref{eq:uniform}, we only use them with $p=2$. 
 By the estimates in \eqref{eq:uniform-m+1}, it follows that $u$ satisfies Definition \ref{def:entropy-solution}, \eqref{item:in-Lm}.
 
 Let $f \in C_b(\bR)$. For each $n$, we clearly have $[\fra_nf](u_n) \in L_2(
\Omega_T; W^1_2(\bT^d))$ and $\D_{x_i}[\fra_nf](u_n)= f(u_n) \D_{x_i} [\fra_n](u_n)$. Also, we have $|[\fra_nf](r)|\leq \|f\|_{L_\infty} 3K |r|^{(m+1)/2}$  for all $r \in \bR$, which combined with  \eqref{eq:uniform} and \eqref{eq:uniform-m+1} gives that that
\begin{equation*}
\sup_n \E\int_t \|[\fra_nf](u_n)\|_{W^1_2(\bT^d)}^2 \, < \infty. 
\end{equation*} 
Hence, for a subsequence we have  $[\fra_n f](u_n) \wto v_f$, $[\fra_n](u_n)  \wto v$ for some $v_f, v \in L_2(\Omega_T; W^1_2(\bT^d))$. By \eqref{eq:approx A} and  \eqref{eq:ae-convergence},\eqref{eq:uniform-integrability} it is easy to see that 
$v_f=[\fra f](u)$, $ v= [\fra](u)$. Moreover, for any $\phi \in C^\infty(\bT^d)$, $B \in \mathcal{F}$, we have 
\begin{equs}
\E I_B \int_{t,x}  \D_{x_i}[\fra f](u) \phi \,  &= \lim_{n \to \infty} \E I_B \int_{t,x} \D_{x_i} [\fra_n f](u_n) \phi \, 
\\
&= \lim_{n \to \infty} \E I_B \int_{t,x} f(u_n)\D_{x_i}[\fra_n](u_n)  \phi \, 
\\
&= \E I_B \int_{t,x} f(u)\D_{x_i}[\fra](u) \phi \, ,
\end{equs}
where for the last equality we have used that $ \D_{x_i}[\fra_n](u_n)  \wto  \D_{x_i}[\fra](u)$ (weakly) and $f(u_n) \to f(u)$ (strongly) in $L_2(\Omega_T; L_2(\bT^d))$. Hence, \eqref{item:chain_ruleW2} from Definition \ref{def:entropy-solution} is also satisfied. We now show \eqref{item:entropies}. Let $\eta$ and $\phi$ be as in \eqref{item:entropies} and let $B \in \mathcal{F}$. By It\^o's formula (see, e.g., \cite{K_Ito}) for the function 
$$
u \mapsto \int_x \eta(u)\varrho,
$$
 and It\^o's product rule, we have 
\begin{equs}            
\nonumber          
-\E I_B \int_{t,x} \eta(u_n)\D_t\phi \, 
&\leq \E I_B   \left[   \int_x \eta(\xi_n) \phi(0) \,  \right.
\\
&\int_{\bT^d} \eta(\xi) \phi(0) \, dx 
+ \int_{t,x}  \left([\fra^2_n \eta'] (u_n)  \Delta \phi + [ a^{ij}\eta'](u_n) \phi_{x_ix_j} \right)  
\\
 +& \int_{t,x} \left(  [(a^{ij}_{x_j}-f^i_r) \eta' ](u_n) -\eta'(u_n) b^i(u_n)\right)  \phi_{x_i}   
\\
+& \int_{t,x} \left( \eta'(u_n)f^i_{x_i}(u_n)-[f^i_{rx_i} \eta' ](u_n) \right)  \phi  
\\
  + & \int_{t,x}  \left(  \frac{1}{2} \eta''(u_n)\sum_k | \sigma^{ik}_{x_i}(u_n) |^2\phi-  \eta''(u_n) | \nabla [\fra](u_n)|^2\phi  \right) 
\\        
  + & \left. \int_0^T \int_{x}\left(\eta'(u_n)\phi \sigma^{ik}_{x_i}(u_n)-  [\sigma^{ik}_{rx_i}\eta' ](u_n) \phi - [\sigma^{ik}_r\eta' ](u_n)  \phi_{x_i} \right)   d\beta^k(t) \right] .
 \\
 \label{eq:un-entropy-inequality} 
\end{equs}
Notice that $\D_{x_i} [ \sqrt{\eta''}\fra_n](u_n)= \sqrt{\eta''(u_n)} \D_{x_i} [\fra_n](u_n) $. As before we have (after passing to a subsequence if necessary)  $\D_{x_i}  [ \sqrt{\eta''}\fra_n](u_n) \wto \D_{x_i}  [ \sqrt{\eta''}\fra](u)$ in $L_2(\Omega_T ;L_2(\bT^d))$. In particular, this implies that $\D_{x_i}  [ \sqrt{\eta''}\fra_n](u_n) \wto \D_{x_i}  [ \sqrt{\eta''}\fra](u)$ in $L_2(\Omega_T\times \bT^d , \bar\mu)$, where $d \bar\mu:= I_B\phi  \  d\bP\otimes dx \otimes dt $ (recall that $\phi \geq 0$). This implies that 
\begin{equs}
\E I_B\int_{t,x}\phi \eta''(u) | \nabla [\fra](u)|^2\,  \leq \liminf_{n \to \infty} \E I_B \int_{t,x} \phi \eta''(u_n) | \nabla [\fra_n](u_n) |^2\, .
\end{equs}
On the basis of \eqref{eq:ae-convergence}, \eqref{eq:uniform-integrability} and the construction of $\xi_n$ and $\mathfrak{a}_n$ one can easily see that the  remaining terms in \eqref{eq:un-entropy-inequality} converge to the corresponding ones from \eqref{eq:entropy-inequality}.

Hence, taking $\liminf$ in \eqref{eq:un-entropy-inequality} along an appropriate subsequence, we see that $u$ satisfies Definition \ref{def:entropy-solution}, \eqref{item:entropies}.

 To summarise, we have shown that if in addition to the assumptions of Theorem \ref{thm:main-theorem} we have that $\E\| \xi\|_{L_2(\bT^d)}^4< \
\infty$, then there exists an entropy solution to \eqref{eq:main-equation} which has the $(\star)$-property (therefore, it is also unique by Theorem \ref{thm:uniqueness}). In addition, we can pass to the limit in \eqref{eq:uniform}-\eqref{eq:uniform-m+1} to obtain that 
\begin{equation}   \label{eq:uniform2}
\begin{aligned}
\E \sup_{t \leq T} \| u\|_{L_2(\bT^d)}^2 + \E \|\nabla [\fra]  (u) \|_{L_2(Q_T)}^2 &\leq N ( 1+ \E \|\xi\|_{L_2(\bT^d)}^2),
\\
\E \sup_{t \leq T} \|u\|_{L_{m+1}(\bT^d)}^{m+1}+ \E \| \nabla A(u)\|_{L_2(Q_T)}^2 &\leq N(1+\E \|\xi\|_{L_{m+1}(\bT^d)}^{m+1}),
\end{aligned}
\end{equation}
with a constant $N$ depending only on $N_0, N_1, d,K,T$ and $m$.

\emph{Step 2:} We now remove the extra condition on $\xi$.  For $n \in \bN$, let $\xi_n$ be defined again by $\xi_n= (n \wedge \xi)\vee(-n)$ and let $u_{(n)}$ be the unique solution of $\mathcal{E}(\Phi,\xi_{n})$. 
Notice that by step 1, $u_{(n)}$ has the $(\star)$-property. Hence, by Theorem \ref{thm:uniqueness} (i) we have that $(u_{(n)})$ is a Cauchy sequence in $L_1(\Omega_T;L_1(\bT^d))$ and therefore has a limit $u$. In addition, $u_{(n)}$ satisfy the estimates \eqref{eq:uniform2} uniformly in $n \in \bN$. With the arguments provided above it is now  routine to show that 
$u$ is an entropy solution. 

 We finally show \eqref{eq:main contraction} which also implies uniqueness. Let $\tilde{u}$ be an entropy solution of $\mathcal{E}(\Phi, \tilde{\xi})$. By Theorem \ref{thm:uniqueness} we have 
$$
\esssup_{t\in[0,T]}\E\int_x|u_{(n)}(t,x)-\tu(t,x)|\leq\E\int_x|\xi_n(x)-\txi(x)|,
$$
where $u_{(n)}$ are as above. We then let $n \to \infty$ 
to finish the proof.
\end{proof}

\section{Stochastic mean curvature flow}
In this section we demonstrate the proof of well-posedness for the one-dimensional stochastic mean curvature flow in graph form by minor modifications of the techniques developed in the previous sections. 

The stochastic mean curvature flow describes the evolution of a curve $M_t=\phi(t,M_0) \subset \bR^2$, $t \in [0,T]$ given by the flow $\phi : [0,T] \times M_0 \to \bR^2$ satisfying
\begin{equs}
d\phi(t,x,y)= \overset{\to}{H}_{M_t}(\phi(t,x,y)) \, dt + \sum_{k=1}^\infty\nu_{M_t}(\phi(t,x,y)) h^k(x,y) \circ d\beta^k(t),
\end{equs}
where $\overset{\to}{H}_{M_t}((x,y))$ is the mean curvature vector of $M_t$ at the point $(x,y) \in M_t$ and $\nu_{M_t}(x,y)$  denotes the normal vector of $M_t$ at $(x,y) \in M_t$. Assuming that $M_t$ is the level set of a function $f(t,\cdot) : \bR^2 \to \bR$, one derives the SPDE
\begin{equs}
df = |\nabla f| \text{div} \left(\frac{\nabla f}{|\nabla f| } \right) \, dt + \sum_{k=1}^\infty h^k |\nabla f | \circ d \beta^k(t).
\end{equs}
In the graph case, that is, when $f(x,y)=y-v(x)$ 
the above equation becomes 
\begin{equ}\label{eq:smc}
d v= \sqrt{1+|v_x|^2} \D_x \left( \frac{v_x}{\sqrt{1+|v_x|^2}} \right) \, dt +  \sum_{k=1}^\infty h^k(x,v)\sqrt{1+|v_x|^2}  \circ d\beta^k(t).
\end{equ}
In \cite{ESR12} the well-posedness of \eqref{eq:smc} is shown under the assumption that $h^1= \eps$, for some $\eps \leq \sqrt{2}$ and $h^k=0$ for $k\neq 1$.  Here, we assume that $h^k(x,y)=h^k(x)$. Hence, taking the derivative in $x$ in the above equation, we derive the following SPDE for $u=v_x$
\begin{equs}    \label{eq:main-equation'}
\begin{aligned}
du=&\partial_{xx} \text{arctan}(u) \, dt+\sum_{k=1}^\infty\partial_x(h^k(x)\sqrt{1+u^2})\circ d\beta^k(t). 
\end{aligned}
\end{equs}
For a function $\Phi: \bR \to \bR$, let  $\mathcal{E}(\Phi,\xi)$ denote the periodic problem 
\begin{equs}
du= \Delta \Phi(u) \, dt + \sum_{k=1}^\infty\D_x ( h^k(x) \sqrt{1+u^2})  \circ d \beta^k(t) \qquad \text{in } [0,T] \times \bT^d,
\end{equs}
with initial condition $\xi$. Therefore, we aim to solve $\mathcal{E}(\Phi, \xi)$ for $\Phi(u)= \text{arctan}(u)$. 
As mentioned above, the proofs of the statements in this section are almost identical to the corresponding  ones of the previous sections. For this reason, we will restrict to pointing out the differences.

For $n \in \mathbb{N}$, let $\mathfrak{b}_n$ be the unique real function on $\bR$  defined by the following properties 
\begin{enumerate}
\item $\mathfrak{b}_n$ is continuous and  odd 
\item $\mathfrak{b}_n(r)= -r (1+r^2)^{-3/2}$ for $r \in [0,n]$
\item $\mathfrak{b}_n$ is linear on $[n, c_n]$, vanishes on $[c_n, \infty)$, and 
\begin{equs}          \label{eq:condition-b-n}
\int_n^{c_n} \mathfrak{b}_n(r) \, dr = -\frac{1}{2\sqrt{1+n^2}}.
\end{equs}
\end{enumerate}
For $n \in \bN$ we set 
\begin{equs}
\mathfrak{a}_n(r):= 1+\int_0^r \mathfrak{b}_n(s)\, ds, \qquad \Phi_n(s):= \int_0^r \mathfrak{a}_n^2(s) \, ds, 
\end{equs}
\begin{equs}
\mathfrak{a}_\infty(r):= (1+|r|^2)^{-1/2}, \qquad \Phi_\infty(r):= \text{arctan}(r).
\end{equs}
We introduce
\begin{equs}
\mathcal{L}:=\Big\{ u : \Omega_T \to L_2(\bT)\Big|\esssup_{[0,T]} \E \|u(t)\|_{L_2(\bT^d)}^p< \infty, \text{for all} \, p >2 \Big\}.
\end{equs}

\begin{remark}\label{rem:coercivity}
By virtue of \eqref{eq:condition-b-n} we have that for all $n \in \bN \cup \{ \infty\}$, $r \in \bR$,  
\begin{equs}
\frac{1}{|\fra_n(r)|} \leq 2(1+|r|). 
\end{equs}
\end{remark}
\begin{assumption}                  \label{as:noise-coef'}
The function $h=(h^k)_{k=1}^\infty: \bT  \to l_2$ is in $C^3(\bT; l_2)$, and for a constant $N_0$
$$
\|h\|_{C^3(\bT; l_2)} \leq N_0.
$$
\end{assumption}

\begin{assumption}                     \label{as:ic'}
For all $p>2$,  $\E \| \xi\|_{L_2(\bT)}^p< \infty$. 
\end{assumption}

\begin{remark}
From now on we use the notation of Section \ref{sec:formulation} with $d=1$,
and 
$$
\sigma^k(x,r):=h^k(x)\sqrt{1+|r|^2}.
$$
Moreover, notice that $\sigma^k$ satisfies Assumption \ref{as:sigma} with $\bar{\kappa}=\beta=\tilde{\beta}=1$. 
\end{remark}

\begin{definition}        \label{def:entropy-solution'}
Let $n \in \bN \cup \{ \infty\}$.  An entropy solution of $\mathcal{E}(\Phi_n, \xi)$  is 
a stochastic process  $ u \in \mathcal{L}$ such that
\begin{enumerate}[(i)]

\item  For all $f \in C_b(\bR)$ we have $[\fra_n f](u) \in L_2(\Omega_T;W^1_2(\bT)$) and 
\begin{equation*}      
\D_{x} [\fra_n f](u)= f(u) \D_{x} [\fra_n](u).
\end{equation*}

\item For all convex $\eta\in C^2(\bR)$ with $\eta''$ compactly supported and all  $\phi\geq 0$ of the form $\phi= \varphi \varrho$ with $\varphi \in C^\infty_c([0,T))$, $\varrho \in C^\infty(\bT)$,   we have 
almost surely
\begin{equs}                    
-\int_0^T \int_{\bT}  \eta(u)\phi_t\, dx dt & \leq  \int_{\bT} \eta(\xi) \phi(0) \, dx 
\\
+ & \int_0^T \int_{\bT}  \left([\fra^2_n\eta'] (u)  \Delta \phi + [ a\eta'](u) \Delta \phi \right)  \, dx  dt 
\\
 +&\int_0^T\int_{\bT^d}\left(  [(a_{x}+
\tfrac{1}{2} b_r) \eta' ](u) -\eta'(u) b^i(u)\right)  \phi_{x_i}   \,dx dt 
\\
+&\int_0^T\int_{\bT^d}\left( -\eta'(u)\tfrac{1}{2} b_x(u)+[\tfrac{1}{2} b_{rx} \eta' ](u) \right)  \phi   \,dx dt 
\\
  + & \int_0^T \int_{\bT^d} \left(  \frac{1}{2} \eta''(u)\sum_k | \sigma^{k}_{x}(u) |^2\phi-  \eta''(u) | \nabla [\fra_n](u)|^2\phi  \right) \, dx  dt 
\\        
  + &\int_0^T \int_{\bT^d}\left(\eta'(u)\phi \sigma^{k}_{x}(u)-  [\sigma^{k}_{rx}\eta' ](u) \phi - [\sigma^{k}_r\eta' ](u)  \phi_{x} \right)  \,dx d\beta^k(t). 
\end{equs}
\end{enumerate}
\end{definition}
With the notation of Definition \ref{def:star} we define:
\begin{definition}
A function $u\in \mathcal{L}$ is said to have the $(\star \star )$-property if there exists a $\mu \in (0,1)$ such that for all $\tu \in \mathcal{L}$, $h,\varrho,\varphi$ as in the Definition \ref{def:star}, and for all sufficiently small $\theta>0$, we have that $F_\theta(\cdot, \cdot, u) \in L_1(\Omega_T \times \bT)$ and
\begin{equ}
\E \int_{t,x} F_\theta(t,x, u(t,x)) \leq  N\theta^{1-\mu}+ \mathcal{B}(u, \tu, \theta)   \label{eq:strong-entropy'}
\end{equ}
for some constant $N$ independent of $\theta$.
\end{definition}

Choosing $m=3$ in \eqref{eq:F-estimate-sharper} from Lemma \ref{lem:F} gives the following. 
\begin{lemma}\label{lem:F'}
For any   $\lambda\in (3/4, 1)$, $k\in\mathbb{N}$ we have
\begin{equation}\label{eq:F estimate'}
\E \| \D_ a F_\theta \|_{L_\infty([0,T]; W^{k}_4(\bT \times \bR))}^4 \leq N \theta^{-\lambda 4}(1+
\esssup_{[0,T]}
\E \| \tu(t)\|_{L_2(\bT)}^4),
\end{equation} 
where $N$ depends only on $N_0, k,d, T ,\lambda$,  and the functions $h, \varrho, \varphi, \tu$, but not on $\theta$.
\end{lemma}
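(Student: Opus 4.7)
The plan is to invoke Lemma \ref{lem:F} directly with the specific choice $m=3$ and spatial dimension $d=1$, and then convert the $\mathcal{N}_m$-type quantity appearing on its right-hand side into the essential supremum that appears in \eqref{eq:F estimate'}.

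First I would check that the parameters of Lemma \ref{lem:F} specialize correctly. With $m=3$ the range of admissible exponents becomes $\lambda \in (\tfrac{m+3}{2(m+1)},1) = (3/4,1)$, the Sobolev exponent becomes $m+1 = 4$, and the weighted $L^p$-index $(m+1)/2$ becomes $2$. Thus Lemma \ref{lem:F} gives, for all sufficiently small $\theta \in (0,1)$,
\begin{equs}
\E \| \D_a F_\theta \|_{L_\infty([0,T]; W^{k}_4(\bT \times \bR))}^{4}
\leq N \theta^{-4\lambda}\, \mathcal{N}_3(\tu),
\end{equs}
where
\begin{equs}
\mathcal{N}_3(\tu) = \E \int_0^T \bigl(1 + \| \tu(t)\|_{L_2(\bT)}^{4} + \|\tu(t)\|_{L_2(\bT)}^{4}\bigr)\, dt.
\end{equs}

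Second, I would bound $\mathcal{N}_3(\tu)$ by pulling the expectation inside the time integral and then estimating the integrand pointwise in $t$ by its essential supremum; this gives
\begin{equs}
\mathcal{N}_3(\tu) \leq 2T\Bigl(1 + \esssup_{t\in[0,T]} \E\|\tu(t)\|_{L_2(\bT)}^4\Bigr),
\end{equs}
and absorbing the constant $2T$ into $N$ yields exactly \eqref{eq:F estimate'}. Since the hypotheses of Lemma \ref{lem:F} require only the regularity of $\sigma$ supplied by Assumption \ref{as:noise-coef'} (which is the special case $\bar\kappa=\beta=\tilde\beta=1$ of Assumption \ref{as:sigma}), and the constant $N$ from Lemma \ref{lem:F} depends only on $N_0,k,d,T,\lambda,m$ and on $h,\varrho,\varphi$, the resulting constant has the claimed dependencies.

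There is no genuine obstacle here; the only bookkeeping point worth stressing is that the $L_\infty$-in-time norm on the left allows one to replace the time integral of $\E\|\tu(t)\|_{L_2}^4$ on the right by the essential supremum without any further argument, which is why the statement of Lemma \ref{lem:F'} is phrased with $\esssup$ rather than an integral. Hence the proof is a one-line specialization of Lemma \ref{lem:F} followed by this trivial estimate.
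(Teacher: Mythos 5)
Your proof is correct and is exactly the paper's argument: the paper introduces Lemma \ref{lem:F'} with the single line ``Choosing $m=3$ in \eqref{eq:F-estimate-sharper} from Lemma \ref{lem:F} gives the following,'' and your parameter checks ($\tfrac{m+3}{2(m+1)}=3/4$, $m+1=4$, $\tfrac{m+1}{2}=2$) together with the Fubini/esssup bound on $\mathcal{N}_3(\tu)$ fill in the same routine details.
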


Similarly to Corollary \ref{cor} one has:
\begin{corollary} \label{cor'}
(i) Let $u_n$ be a sequence bounded in $L_2(\Omega_T\times \bT)$, satisfying the $(\star \star)$-property  uniformly in $n$, that is,  with constant $N$ in \eqref{eq:strong-entropy'} independent of $n$. 
Suppose that $u_n$ converges for almost all $\omega,t,x$ to a function $u$. Then $u$ has the $(\star \star)$-property.

(ii) Let $u\in L_2(\Omega\times Q_T)$. Then one has for all $\theta>0$
\begin{equ}\label{eq:0lambda limit'}
\E \int_{t,x} F_\theta(t,x, u(t,x)) = \lim_{\lambda \to 0}  \E \int_{t,x,a} F_\theta(t,x, a) \rho_\lambda(u(t,x)-a) \, .
\end{equ}
\end{corollary}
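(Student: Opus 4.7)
The plan is to mirror the proof of Corollary~\ref{cor} line by line, with Lemma~\ref{lem:F} replaced by Lemma~\ref{lem:F'}, which supplies the $L_4$-based moment estimates suited to the one-dimensional setting where $m=3$ plays the role of the porous-medium exponent. Since $F_\theta(t,x,\cdot)$ is smooth in its third argument, both parts reduce to standard dominated/uniformly-integrable convergence once the right pointwise bounds are in hand.

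For part (i), the a.e.\ convergence $u_n\to u$ on $\Omega_T\times\bT$ together with smoothness of $F_\theta$ in $a$ immediately gives $F_\theta(t,x,u_n(t,x))\to F_\theta(t,x,u(t,x))$ a.e. Using the Lipschitz bound
\begin{equs}
|F_\theta(t,x,u_n(t,x))|\leq \|\D_a F_\theta\|_{L_\infty(Q_T\times\bR)}\,|u_n(t,x)|+|F_\theta(t,x,0)|,
\end{equs}
Lemma~\ref{lem:F'} with $k\geq 1$ such that $W^k_4(\bT\times\bR)\hookrightarrow L_\infty(\bT\times\bR)$ (e.g.\ $k=2$) shows that $\E\|\D_a F_\theta\|_{L_\infty(Q_T\times\bR)}^4$ is finite (invoking that $\tu\in\mathcal{L}$ yields $L_2$-moments of all orders $>2$); combined with the assumed $L_2$-boundedness of $u_n$, this produces a uniformly integrable majorant of $F_\theta(t,x,u_n(t,x))$ on $\Omega_T\times\bT$. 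Vitali's theorem then gives
\begin{equs}
\lim_{n\to\infty}\E\int_{t,x}F_\theta(t,x,u_n(t,x))=\E\int_{t,x}F_\theta(t,x,u(t,x)).
\end{equs}

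For the right-hand side of \eqref{eq:strong-entropy'}, each of the nine integrands defining $\mathcal{B}(u_n,\tu,\theta)$ depends smoothly on the value of $u_n$; moreover, since $h'\in C_c^\infty(\bR)$ is supported in some $[-M,M]$ and $\sigma^k,\sigma^k_r,\sigma^k_{rx},\sigma^k_x$ are uniformly bounded by Assumption~\ref{as:noise-coef'}, each inner double integral is controlled by $N(1+|u_n|+|\tu|)$ times the corresponding derivative of $\phi_\eps$. This provides an $n$-uniform integrable majorant, so dominated convergence yields $\mathcal{B}(u_n,\tu,\theta)\to\mathcal{B}(u,\tu,\theta)$. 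Since the constant $N$ in \eqref{eq:strong-entropy'} is $n$-independent by assumption, passing to the limit gives the $(\star\star)$-property for $u$.

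For part (ii), I would use the elementary bound
\begin{equs}
\Big|F_\theta(t,x,u(t,x))-\int_a F_\theta(t,x,a)\rho_\lambda(u(t,x)-a)\,da\Big|\leq \lambda\,\|\D_a F_\theta\|_{L_\infty(Q_T\times\bR)},
\end{equs}
which follows from $\int_a\rho_\lambda(u-a)\,da=1$, $\supp\rho_\lambda\subset[0,\lambda]$, and the mean-value theorem. Taking expectation, integrating over $(t,x)\in[0,T]\times\bT$, and using that by Lemma~\ref{lem:F'} (again with Sobolev embedding) the right-hand side is finite for each fixed $\theta>0$, we conclude by sending $\lambda\to 0$.

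The only point that needs care is the Sobolev embedding step: in contrast to Corollary~\ref{cor} where the $L_{m+1}$-norm controls $L_\infty$ via Lemma~\ref{lem:F}, here one relies on the $L_4$-based estimate of Lemma~\ref{lem:F'} together with the stronger regularity of $\tu\in\mathcal{L}$ (which supplies arbitrarily high $L_2$-moments in $\omega$). No new difficulty beyond the analog in Corollary~\ref{cor} arises.
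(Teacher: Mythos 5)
Your proposal is correct and follows essentially the same route as the paper, which proves this corollary by repeating the argument of Corollary \ref{cor} verbatim with Lemma \ref{lem:F} replaced by Lemma \ref{lem:F'}: the same Lipschitz-in-$a$ bound, the same uniform-integrability/Vitali argument for the left-hand side, the same (easier) dominated-convergence argument for $\mathcal{B}$, and the same elementary estimate $\lambda\|\D_aF_\theta\|_{L_\infty}$ for part (ii). The extra details you supply (the Sobolev embedding exponent and the $n$-uniform majorant for $\mathcal{B}$) are consistent with what the paper leaves implicit.
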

\begin{theorem}             \label{thm:mean-curv-thm}
Suppose that Assumption \ref{as:noise-coef'} holds and let $\xi, \tilde{\xi}$ satisfy Assumption \ref{as:ic'}. For  $n, n'  \in \bN \cup \{ \infty\}$,  let $u, \tu$ be entropy solutions of $\mathcal{E}(\Phi_n, \xi)$,  $\mathcal{E}(\Phi_{n'}, \txi)$ respectively, and assume that $u$ has the $(\star \star )$-property. Then,

\begin{enumerate}[(i)]
 \item if furthermore $n={n'}$, then 
\begin{equ}\label{eq:L_1 contraction'}
\esssup_{t\in[0,T]}\E\|u(t)-\tu(t)\|_{L_1(\bT)} \leq N \E\|\xi-\txi \|_{L_1(\bT)},
\end{equ}
\label{it:contraction'}

\item  \label{it:super-inequality'}If $u \in L_2(\Omega_T; W^1_2(\bT))$, then for all $\eps,\delta\in(0,1]$, $\lambda\in[0,1]$,  we have 
\begin{equs}
\E \|u-\tilde{u}\|_{L_1(Q_T)}   
& \leq N \E \|\xi-\txi\|_{L_1(\bT)}
\\
&+N\eps \big(1+\E\|\D_x [\fra_n] (u)\|_{L_2(Q_T)}^2+ \E \|u\|_{L_2(Q_T)}^2 \big)
\\
&+N\sup_{|h|\leq\eps} \E\|\txi(\cdot)-\txi(\cdot+h)\|_{L_1(\T)}
\\
&+N\eps^{-2}\E\big(\|I_{|u|\geq R_\lambda}(1+|u|)\|_{L_1(Q_T)}+
\|I_{|\tu|\geq R_\lambda}(1+|\tu|)\|_{L_1(Q_T)}\big)
\\
&+N C(\delta, \eps, \lambda) \E(1+\|u\|^2_{L_{2}(Q_T)}+\|\tu\|^2_{L_2(Q_T)}),
\label{eq:super inequality'}
\end{equs}
where
\begin{equs}\label{eq:R lambda'}
R_\lambda& :=\sup\{R\in[0,\infty]:\,|\fra_n(r)-\fra_{n'}(r)|\leq\lambda, \,\,\forall |r|<R\},
\\
C(\delta, \eps, \lambda)&:= \big(\delta+ \delta^{2} \eps^{-2}+\delta \eps^{-1}+\eps^{2}\delta^{-1}+\eps^{-2}\lambda^2+\eps),
\end{equs}
and $N$ is a constant depending only on $N_0,d,$ and $T$.
\end{enumerate}
\end{theorem}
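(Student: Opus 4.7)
The plan is to imitate the scheme of the proof of Theorem \ref{thm:uniqueness} almost verbatim, exploiting the simplifications available in the present setting: namely $d=1$, the product structure $\sigma^k(x,r)=h^k(x)\sqrt{1+r^2}$, and the fact that Assumption \ref{as:sigma} is satisfied with $\bar\kappa=\beta=\tilde\beta=1$. The starting point is to apply the entropy inequality for $u$ with the test function $\phi_{\theta,\eps}(\cdot,\cdot,s,y)$ and entropy $\eta_\delta(\cdot-a)$, substitute $a=\tu(s,y)$, integrate over $(s,y)$, take expectations, and add the symmetric inequality in which the roles of $(u,t,x)$ and $(\tu,s,y)$ are exchanged. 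Passing to $\theta\to 0$ using the $(\star\star)$-property of $u$ (together with Corollary \ref{cor'} (ii) to justify the substitution on the stochastic integral) yields the analogue of \eqref{eq:added-entropies}, whose right-hand side breaks into the quantities $\mathcal{A}^{(\eps,\delta)}$, $\mathcal{B}^{(\eps,\delta)}$, $\mathcal{C}^{(\eps,\delta)}$ and the ``parabolic defect'' term that combines $[\fra_n^2,\delta]\Delta_x\phi_\eps$, $[\fra_{n'}^2,\delta]\Delta_y\phi_\eps$ with $\mathcal{B}_9^{(\eps,\delta)}$ and the entropy squared noise.

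The estimates of $\mathcal{A}^{(\eps,\delta)}+\sum_{l=1}^8\mathcal{B}_l^{(\eps,\delta)}$ and $\mathcal{C}^{(\eps,\delta)}$ follow the lines of Lemma \ref{lem:A+B} and Lemma \ref{lem:C}, and in fact simplify since $\sigma^k_r(x,r)=h^k(x)r/\sqrt{1+r^2}$ has all required Hölder norms equal to $1$ and since $\mathcal{C}^{(\eps,\delta)}=0$ here (no $G^i$ is present). The parabolic defect term is treated by the coercivity argument of \cite[Theorem 4.1]{DareiotisGerencserGess}; since the assumption $|\fra'(r)|\le K|r|^{(m-3)/2}$ of Assumption \ref{as:A} is satisfied with $m=3$ (because $\fra_\infty'(r)=-r(1+r^2)^{-3/2}$ and the $\fra_n$ are truncations of this), the corresponding bound reads, for any $\alpha\in(0,1)$,
\begin{equs}
&\mathcal{B}_9^{(\eps,\delta)}+\E\!\int_{t,x,y}\!([\fra_n^2,\delta](u,\tu)\Delta_x\phi_\eps+[\fra_{n'}^2,\delta](\tu,u)\Delta_y\phi_\eps)
\\
&\quad+\E\!\int_{t,x,y}\!\Big(\tfrac12\eta_\delta''(u-\tu)\sum_k|\sigma^{k}_x(x,u)|^2\phi_\eps-\eta_\delta''(u-\tu)|\nabla[\fra_n](u)|^2\phi_\eps\Big)
\\
&\quad+\E\!\int_{t,x,y}\!\Big(\tfrac12\eta_\delta''(u-\tu)\sum_k|\sigma^{k}_y(y,\tu)|^2\phi_\eps-\eta_\delta''(u-\tu)|\nabla[\fra_{n'}](\tu)|^2\phi_\eps\Big)
\\
&\quad\le\big(\delta+\eps^{2}\delta^{-1}+\eps^{-2}\delta^{2\alpha}+\eps^{-2}\lambda^2\big)\E(1+\|u\|^2_{L_2(Q_T)}+\|\tu\|^2_{L_2(Q_T)})
\\
&\quad+\eps^{-2}\E\big(\|I_{|u|\ge R_\lambda}(1+|u|)\|_{L_1(Q_T)}+\|I_{|\tu|\ge R_\lambda}(1+|\tu|)\|_{L_1(Q_T)}\big),
\end{equs}
which together with the simplified $\mathcal{A}+\mathcal{B}_{1\text{--}8}$ bounds produces, after choosing the test function in time as in \eqref{eq:before eps limit}--\eqref{eq:fork} and letting $\gamma\to 0$, the inequality
\begin{equs}
\E\!\int_{x,y}\!|u(t,x)-\tu(t,y)|\varrho_\eps(x-y)\le\E\!\int_{x,y}\!|\xi(x)-\txi(y)|\varrho_\eps(x-y)+M(\eps,\delta,\lambda),
\end{equs}
where $M$ collects all the error terms above (here the initial-condition piece is handled by Lemma \ref{lem:initial-condition}).

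To conclude part (i), we pick $\nu\in(1/2,1)$ with $2\nu>1$ and $\alpha<1$ with $-2+4\alpha\nu>0$, set $\delta=\eps^{2\nu}$, choose $\lambda=0$ so $R_\lambda=\infty$, integrate the resulting inequality in $s$ between $0$ and an additional variable $\gamma$ and average, and send $\eps\to 0$ by continuity of translations in $L_1$; $C(\eps,\delta)\to 0$, and Gronwall's lemma on the resulting integral inequality in $t$ delivers \eqref{eq:L_1 contraction'}. For part (ii), we integrate \eqref{eq:fork} over $t\in(0,T)$ as in \eqref{eq:before-gronwal} and bound the remaining regularisation error $\E\int_{t,x,y}|u(t,x)-u(t,y)|\varrho_\eps(x-y)$. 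The main obstacle here is that the compactness estimate of Lemma \ref{lem:frac reg} (which relied on Assumption \ref{as:A} \eqref{eq:as Psi}) has no direct analogue; instead we use the coercivity bound of Remark \ref{rem:coercivity}, writing
\begin{equs}
|u(t,x)-u(t,y)|&\le 2(1+|u(t,x)|+|u(t,y)|)\,|[\fra_n](u(t,x))-[\fra_n](u(t,y))|,
\end{equs}
and applying the Cauchy--Schwarz inequality together with the Sobolev-type estimate
\begin{equs}
\int_{t,x,y}|[\fra_n](u(t,x))-[\fra_n](u(t,y))|^2\varrho_\eps(x-y)\le N\eps^{2}\|\D_x[\fra_n](u)\|_{L_2(Q_T)}^2,
\end{equs}
which holds since $u\in L_2(\Omega_T;W^1_2(\bT))$ by the hypothesis of (ii). This produces the term $N\eps(1+\E\|\D_x[\fra_n](u)\|_{L_2(Q_T)}^2+\E\|u\|_{L_2(Q_T)}^2)$ appearing in \eqref{eq:super inequality'}, and applying Gronwall's lemma to the resulting integral inequality in $s$ concludes the proof of (ii). The delicate point is therefore the replacement of Lemma \ref{lem:frac reg}: it is the only place where the structure of $\fra$ near infinity genuinely enters, and the coercivity $1/\fra_n\le 2(1+|r|)$ from Remark \ref{rem:coercivity} is precisely what allows this substitution at the cost of the additional $\|u\|_{L_2}^2$ term on the right-hand side.
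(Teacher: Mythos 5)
Your overall strategy coincides with the paper's: rerun the proof of Theorem \ref{thm:uniqueness} with $d=1$, $\bar\kappa=\beta=\tilde\beta=1$, and replace Lemma \ref{lem:frac reg} by an argument based on the coercivity of Remark \ref{rem:coercivity}. The one step where you genuinely diverge is that replacement: the paper writes $u_x=\D_x[\fra_n](u)/\fra_n(u)$ along the segment joining $x$ and $y$ and then applies Young's inequality together with $1/\fra_n(r)\le 2(1+|r|)$ (this is where the hypothesis $u\in L_2(\Omega_T;W^1_2(\bT))$ enters), whereas you use the mean value theorem for $[\fra_n]$ to get $|u(x)-u(y)|\le 2(1+|u(x)|+|u(y)|)\,|[\fra_n](u(x))-[\fra_n](u(y))|$ and then Cauchy--Schwarz plus the elementary $W^1_2$ translation estimate for $[\fra_n](u)$. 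Both routes rest on Remark \ref{rem:coercivity} and land on the same term $N\eps(1+\E\|\D_x[\fra_n](u)\|^2_{L_2(Q_T)}+\E\|u\|^2_{L_2(Q_T)})$; your version only needs $[\fra_n](u)\in W^1_2$, which the entropy formulation already provides, so it is a legitimate and arguably slightly more economical variant.

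Two inaccuracies should be fixed. First, $\mathcal{C}^{(\eps,\delta)}$ does not vanish: even with $G\equiv 0$ one has $f^i=-b^i/2\neq 0$ (this is visible in Definition \ref{def:entropy-solution'}), so these terms must still be estimated via Lemma \ref{lem:C} — harmless, but your claim as written is false. Second, and more substantively, your treatment of the parabolic defect term is not justified as stated: you cannot invoke the estimate of \cite[Theorem 4.1]{DareiotisGerencserGess} by asserting that Assumption \ref{as:A} holds with $m=3$, because the non-degeneracy condition \eqref{eq:as Psi} fails for $\fra_\infty(r)=(1+r^2)^{-1/2}$ (it decays to zero), which is precisely why Section 7 exists. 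The paper instead reproduces that proof with $m=1$ and modifies the single estimate of the term $D_1$ there, using $\sup_n\sup_r|\fra_n'(r)|<\infty$, to obtain the defect bound with $\alpha=1$; your version with $\alpha\in(0,1)$ would still close part (i) (choose $\alpha>1/2$), but yields $\eps^{-2}\delta^{2\alpha}$ in place of $\eps^{-2}\delta^{2}$ and hence a strictly weaker constant than the $C(\delta,\eps,\lambda)$ asserted in \eqref{eq:super inequality'}.
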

\begin{proof}
The proof is mostly a repetition of the proof of Theorem \ref{thm:uniqueness} (with $m=1$, $\bar{\kappa}=1$, and $\beta=1$) with very small modifications. Therefore, we only point out these modifications. One proceeds as in the proof of Theorem \ref{thm:uniqueness} up to \eqref{eq:1}. There, we claim that \eqref{eq:1} holds  with $\alpha=1$. 
This follows if one reproduces the proof of \cite[Theorem 4.1, (4.8) and (4.18) therein]{DareiotisGerencserGess} (with $m=1$) with only one difference: In order to estimate the term $D_1$  (see \cite[(4.13)]{DareiotisGerencserGess}), one uses that $\sup_n \sup_r |\fra_n'(r)|< \infty$ to obtain the estimate
\begin{equs}
|D_1| \le \delta^2|u-\tu|,
\end{equs}
in place of \cite[(4.16)]{DareiotisGerencserGess}. Proceeding then as in the proof of Theorem \ref{thm:uniqueness} one obtains \eqref{eq:fork} with $m=1$, $\bar{\kappa}=1$, and $\beta=1$. From there, \eqref{it:contraction'} follows exactly as in Theorem \ref{thm:uniqueness}. For \eqref{it:super-inequality'}, the only difference to the proof of Theorem \ref{thm:uniqueness} is that instead of Lemma \ref{lem:frac reg}, one uses the following
\begin{equs}
&\E \int_{t,x,y}|u (t,x)-u(t,y)|\varrho_\varepsilon(x-y)
\\
\leq  & \E \int_{t,x,y} \int_0^1 |x-y| |u_{x}(x+ \theta (y-x))| \, d \theta \varrho_\varepsilon(x-y)
\\
= & \E \int_{t,x,y} \int_0^1 |x-y| \frac{| (\D_{x} [\fra_n](u))(x+ \theta (y-x))|}{\fra_n(u)(x+ \theta (y-x))}\, d \theta \varrho_\varepsilon(x-y)
\\
\leq & \eps N \left( \E \| \D_x [\fra_n] (u) \|^2_{L_2(Q_T)}  + \E \| (\fra_n(u))^{-1} \|^2_{L_2(Q_T)}  \right) 
\\
\leq & \eps N \left(1+\E \| \D_x  [\fra_n](u) \|^2_{L_2(Q_T)}  + \E \|u\|^2_{L_2(Q_T)}  \right),
\end{equs}
with $N$ independent of $n$ (where we have used Remark \ref{rem:coercivity}).
\end{proof}

Similarly to \eqref{eq:uniform}-\eqref{eq:uniform-m+1}, we have that if $u_n$ are $L_2$-solutions to $\mathcal{E}(\xi, \Phi_n)$ for $n \in \bN$, then for all $p \geq 2$
\begin{equs}\label{eq:uniform'}
\E \sup_{t \leq T} \| u_n\|_{L_2(\bT)}^p + \E \|\D_x [\fra_n](u_n) \|_{L_2(Q_T)}^p &\leq N ( 1+ \E \|\xi\|_{L_2(\bT)}^p),
\\
\label{eq:uniform-m+1'}
\E \sup_{t \leq T} \|u_n\|_{L_{2}(\bT)}^{2}+ \E \|\D_x \Phi_n(u_n)\|_{L_2(Q_T)}^2 &\leq N(1+\E \|\xi\|_{L_{2}(\bT)}^{2}),
\end{equs}
where $N$ depends only on $N_0, T, d,$ and $p$. Using these estimates, Corollary \ref{cor'}, and Lemma \ref{lem:F'}, one proves the following analogue of Lemma \ref{lem:strong entropy} :
\begin{lemma}                  \label{lem:strong entropy'}
Let Assumptions \ref{as:noise-coef'}-\ref{as:ic'} hold, and  for each $n \in \bN$, let $u_n$ be an $L_2$-solution of $\mathcal{E}(\Phi_n, \xi)$. Then, $u_n$ has the $(\star \star)$-property and the constant $N$ in \eqref{eq:strong-entropy'} is independent of $n$. 
\end{lemma}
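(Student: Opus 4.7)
The plan is to mirror the proof of Lemma \ref{lem:strong entropy} with the substitutions $d=1$, $m=1$ and $\bar\kappa=\beta=\tilde\beta=1$, while tracking $n$-independence of all constants throughout. Fix $\theta>0$ small enough for the representation \eqref{eq:F rewrite} to hold. By Corollary~\ref{cor'}(ii) it suffices to estimate $\lim_{\lambda\to 0}\E\int_{t,x,a}F_\theta(t,x,a)\rho_\lambda(u_n(t,x)-a)$; the inner quantity is in turn approximated, as in \eqref{eq:0gamma limit}, by spatially mollifying $u_n$ into $u_n^{(\gamma)}:=\rho_\gamma\ast u_n$ inside the argument of $\rho_\lambda$ and sending $\gamma\to 0$, with the error controlled via Lemma~\ref{lem:F'} and $u_n\in\mathcal{L}$. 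Exploiting \eqref{eq:F rewrite} to subtract the $\cF_{t-\theta}$-measurable quantity $\rho_\lambda(u_n^{(\gamma)}(t-\theta,x)-a)$ and then applying It\^o's formula to the smoothed equation analogous to \eqref{eq:viscous smoothed equation} yields the decomposition $C^{(1)}_{\lambda,\gamma}+\cdots+C^{(5)}_{\lambda,\gamma}$ of \eqref{eq:after Ito}.

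Each term is estimated as in the original proof. The deterministic pieces $C^{(1)}$, $C^{(2)}+C^{(4)}$, and $C^{(5)}$ reduce via integration by parts in $x$ and $a$, the Cauchy--Schwarz inequality, and \eqref{eq:whole-theta}, to combinations of $\|\D_a F_\theta\|_{L_\infty}$ (controlled by Lemma~\ref{lem:F'}) together with scalar $L_p(\Omega_T\times\bT)$-norms of $u_n$ and of $\nabla\Phi_n(u_n)$. Choosing a fixed $\lambda\in(3/4,1)$ in Lemma~\ref{lem:F'} produces the exponent $\mu:=1-\lambda\in(0,1/4)$ on $\theta$ required in \eqref{eq:strong-entropy'}. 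For the stochastic term $C^{(3)}_{\lambda,\gamma}$, It\^o's isometry followed by the same chain of integrations by parts carried out in the derivation of \eqref{eq:C3limit-lambda} shows $\lim_{\lambda\to 0}\lim_{\gamma\to 0}\E C^{(3)}_{\lambda,\gamma}=\mathcal{B}(u_n,\tu,\theta)$. Since under Assumption~\ref{as:noise-coef'} the coefficients $a$, $b$, $f$, $\sigma^k_x$ grow at most linearly in $u_n$ with constants depending only on $N_0$, the surviving deterministic bounds involve only scalar $L_p$-norms of $u_n$ and of $\nabla[\fra_n](u_n)$, both of which are uniformly controlled in $n$ via \eqref{eq:uniform'}.

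The main obstacle is the analogue of \eqref{eq:A2}, namely the control of the subterm $C^{(12)}_{\lambda,\gamma}$ whose $\gamma\to 0$ limit involves $\E\|\nabla u_n\cdot\nabla\Phi_n(u_n)\|_{L_1(Q_T)}^2$. In the porous medium case this forced the constant to depend on $n$ through the degenerate lower bound $\fra_n\geq 2/n$. Here, however, $\fra_n(0)=1$ together with $\mathfrak{b}_n\leq 0$ on $[0,\infty)$ and evenness of $\fra_n$ yield $0<\fra_n\leq 1$ pointwise on $\bR$ \emph{uniformly} in $n$, hence $\nabla u_n\cdot\nabla\Phi_n(u_n)=\fra_n^2(u_n)|\nabla u_n|^2=|\nabla[\fra_n](u_n)|^2$ and the quantity of interest equals $\E\|\nabla[\fra_n](u_n)\|_{L_2(Q_T)}^4$, which is uniformly bounded in $n$ by \eqref{eq:uniform'} (with $p=4$) in combination with the moment bound supplied by Assumption~\ref{as:ic'}. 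The same assumption ensures that $\esssup_{[0,T]}\E\|u_n(t)\|_{L_2(\bT)}^4$ is uniform in $n$, rendering the application of Lemma~\ref{lem:F'} $n$-independent. The remainder is routine bookkeeping in the integrations by parts, identical to that in Lemma~\ref{lem:strong entropy} up to the substitutions $m\mapsto 1$ and $\bar\kappa,\beta,\tilde\beta\mapsto 1$.
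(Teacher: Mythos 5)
Your proposal follows essentially the same route as the paper, which proves this lemma by repeating the argument of Lemma \ref{lem:strong entropy} and observing that, since Assumption \ref{as:ic'} provides all moments $p>2$ of $\|\xi\|_{L_2(\bT)}$, the uniform estimates \eqref{eq:uniform'}--\eqref{eq:uniform-m+1'} make every surviving constant $n$-independent; your treatment of the critical term $C^{(12)}$ via $\nabla u_n\cdot\nabla\Phi_n(u_n)=|\nabla[\fra_n](u_n)|^2$ and \eqref{eq:uniform'} with $p=4$ is exactly the intended mechanism. Two small corrections: the identity $\nabla u_n\cdot\nabla\Phi_n(u_n)=|\nabla[\fra_n](u_n)|^2$ is already what is used in \eqref{eq:A2} of the porous-medium case (the $n$-dependence there stems from the moment requirement on $\xi_n$, resolved by the fourth-moment hypothesis, while $\fra_n\geq 2/n$ is only used qualitatively to justify the $\gamma\to0$ limit via \eqref{eq:gradient}); and the exponent bookkeeping should read $\mu:=\lambda\in(3/4,1)$ rather than $\mu:=1-\lambda$, since the bound produced is $N\theta^{1-\lambda}$ and the definition of the $(\star\star)$-property leaves $\mu\in(0,1)$ free.
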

Moreover, similarly to Proposition \ref{prop:viscoous-well-posedenss} one proves the following. 

\begin{proposition}           \label{prop:existence}
Let Assumptions \ref{as:noise-coef'}-\ref{as:ic'} hold. Then, for each $n \in \mathbb{N}$, equation $\mathcal{E}(\Phi_n, \xi)$  has a unique $L_2$-solution $u_n$. 
\end{proposition}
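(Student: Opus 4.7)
The strategy mirrors that of Proposition \ref{prop:viscoous-well-posedenss} in the earlier sections. Fix $n \in \bN$ and suppress $n$-dependence in the notation. The plan is to set up a Galerkin approximation: let $(e_k)$ be an orthonormal basis of $L_2(\bT)$ consisting of eigenfunctions of $I-\Delta$, let $V_l := \mathrm{span}\{e_1,\ldots,e_l\}$, and let $\Pi_l$ be the $W^{-1}_2$-projection onto $V_l$. By construction $\fra_n$ is smooth with bounded derivatives and $\Phi_n$ has bounded derivatives as well; together with Assumption \ref{as:noise-coef'}, the coefficients $a$, $b$, $f$ and $\sigma^k(x,r)=h^k(x)\sqrt{1+r^2}$ are smooth with linear growth. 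Hence, the projected Galerkin system on $V_l$ is an SDE with locally Lipschitz coefficients of linear growth, so it admits a unique $V_l$-valued solution $u_l$ that is continuous in $L_2(\bT)$ and lies in $L_2(\Omega_T;W^1_2(\bT))$. Applying It\^o's formula to $\|u_l\|_{L_2(\bT)}^p$ for $p\geq 2$ yields the bounds \eqref{eq:uniform'}--\eqref{eq:uniform-m+1'} uniformly in $l$.

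Next, I would establish tightness of $(u_l)$ in $\mathcal{X} := L_2([0,T];L_2(\bT))\cap C([0,T];W^{-2}_2(\bT))$ exactly as in the proof of Proposition \ref{prop:viscoous-well-posedenss}: the drift is bounded in $W^1_2([0,T];W^{-1}_2(\bT))$ by the boundedness of $a^{ij}$, the linear growth of $b, f$ and \eqref{eq:uniform'}; the stochastic part is bounded in $W^{\alpha}_p([0,T];W^{-1}_2(\bT))$ for $\alpha<1/2$ and any $p\geq 2$ via \cite[Lemma 2.1]{FLANDOLI} combined with the linear growth of $\sigma$. Taking any two subsequences $(u_{l_q})$, $(u_{\bar l_q})$, the joint laws on $\mathcal{Z}:=\mathcal{X}\times\mathcal{X}\times C([0,T];l_2)$ (together with the driving noise $\beta=\sum_k 2^{-k/2}\beta^k \mathfrak{e}_k$) are tight. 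Prokhorov and Skorohod give processes $(\hat u, \check u, \tilde\beta)$ and $(\widehat{u_{l_q}},\widecheck{u_{\bar l_q}},\tilde\beta_q)$ on a new probability space with a.s.\ convergence in $\mathcal{Z}$ and preservation of joint laws. The standard martingale identification, tested against $\phi=(I-\Delta)^2\psi$ for $\psi\in\bigcup_q V_{l_q}$, shows that $\tilde\beta^k$ are independent Wiener processes with respect to the augmented filtration generated by $(\hat u,\check u,\tilde\beta)$, and that both $\hat u$ and $\check u$ are $L_2$-solutions of $\mathcal{E}(\Phi_n,\hat\xi)$ and $\mathcal{E}(\Phi_n,\check\xi)$, where $\hat\xi=\hat u(0)$, $\check\xi=\check u(0)$. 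Since $\Pi_{l_q}\xi\to\xi$ in $L_2(\bT)$ and laws are preserved, $\hat\xi=\check\xi=\xi$ almost surely.

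The remaining ingredient is pathwise uniqueness, which is the crux of this strong existence argument. By Lemma \ref{lem:strong entropy'}, every $L_2$-solution of $\mathcal{E}(\Phi_n,\xi)$ has the $(\star\star)$-property, with the constant in \eqref{eq:strong-entropy'} independent of $n$. Then Theorem \ref{thm:mean-curv-thm}(i) applied with $n=n'$ gives
\begin{equs}
\esssup_{t\in [0,T]} \E \| \hat u(t) - \check u(t)\|_{L_1(\bT)} \leq N \E \|\hat\xi - \check \xi\|_{L_1(\bT)} = 0,
\end{equs}
so $\hat u = \check u$. The Gy\"ongy--Krylov criterion \cite[Lemma 1.1]{Istvan} then upgrades convergence in law on two subsequences to convergence in probability of the original sequence $(u_l)$ in $\mathcal{X}$ to some $u$, on the original probability space. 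Finally, passing to the limit in the Galerkin equation using the uniform estimates \eqref{eq:uniform'}--\eqref{eq:uniform-m+1'} and the a.s.\ convergence along a subsequence identifies $u$ as an $L_2$-solution of $\mathcal{E}(\Phi_n,\xi)$, while uniqueness follows again from Lemma \ref{lem:strong entropy'} combined with Theorem \ref{thm:mean-curv-thm}(i).

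The main obstacle is the martingale identification step: one must pass to the limit in the nonlinear terms $\Delta\Phi_n(u_l)$, $\D_x(a(u_l)\D_x u_l)$, and the gradient noise $\D_x\sigma^k(x,u_l)=\D_x(h^k(x)\sqrt{1+u_l^2})$ after testing against $\phi=(I-\Delta)^2\psi$ with $\psi\in V_{l_q}$. The diffusion term passes by boundedness of $\Phi_n'$ and strong $L_2(\Omega_T;L_2(\bT))$ convergence; the $a$-term is handled by integrating by parts twice to move derivatives onto $\psi$ and using the uniform Lipschitz continuity of $[a]$ and $[a_x]$ in $r$ (Assumption \ref{as:sigma} with $\bar\kappa=\beta=1$); and the stochastic integral converges because $r\mapsto h^k(x)\sqrt{1+r^2}$ has uniformly bounded $r$-derivative, yielding strong $L_2(\Omega_T;L_2(\bT;l_2))$ convergence of $\sigma^k(\cdot,\widehat{u_{l_q}})$ to $\sigma^k(\cdot,\hat u)$ along the almost everywhere convergent subsequence obtained from Skorohod, with uniform integrability supplied by \eqref{eq:uniform'}.
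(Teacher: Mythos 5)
Your proposal is correct and follows essentially the same route as the paper, which itself only states that Proposition \ref{prop:existence} is proved ``similarly to Proposition \ref{prop:viscoous-well-posedenss}'': Galerkin approximation, uniform estimates, tightness and Skorohod representation, martingale identification, pathwise uniqueness via the $(\star\star)$-property and Theorem \ref{thm:mean-curv-thm}(i), and finally the Gy\"ongy--Krylov criterion. The only point worth making explicit is that Theorem \ref{thm:mean-curv-thm}(i) is stated for \emph{entropy} solutions, so before invoking it you should record (as the paper does in the proof of Proposition \ref{prop:viscoous-well-posedenss}, via It\^o's formula for $u\mapsto\int_x\eta(u)\varrho$) that the $L_2$-solutions $\hat u,\check u$ are also entropy solutions, and that what one actually obtains on the new probability space is $\hat\xi=\check\xi$ almost surely with $\hat\xi\overset{d}{=}\xi$, which is all that is needed.
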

Finally, using Proposition \ref{prop:existence}, Lemma \ref{lem:strong entropy'}, and Theorem \ref{thm:mean-curv-thm}, we obtain the following theorem in a similar manner as Theorem \ref{thm:main-theorem} is concluded from Proposition \ref{prop:viscoous-well-posedenss}, Lemma \ref{lem:strong entropy}, and Theorem \ref{thm:uniqueness}.
\begin{theorem}
Let Assumptions \ref{as:noise-coef'}-\ref{as:ic'} hold. 
Then, there exists a unique entropy solution of $\mathcal{E}(\Phi_\infty, \xi)$.  Moreover, if $\tu$ is the unique entropy solution of $\mathcal{E}(\Phi_\infty, \txi)$, then 
\begin{equs}           \label{eq:main contraction'}
\esssup_{t \leq T} \E \| u(t)-\tu(t)\|_{L_1(\bT)} \leq N \E \| \xi-\txi \|_{L_1(\bT)},
\end{equs}
where $N$ is a constant depending only on $N_0$ and $T$.
\end{theorem}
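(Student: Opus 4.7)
The plan is to mirror the proof of Theorem \ref{thm:main-theorem} essentially verbatim, with the mean curvature--specific versions of the tools: Proposition \ref{prop:existence} in place of Proposition \ref{prop:viscoous-well-posedenss}, Lemma \ref{lem:strong entropy'} in place of Lemma \ref{lem:strong entropy}, and Theorem \ref{thm:mean-curv-thm} in place of Theorem \ref{thm:uniqueness}. The one conceptual new point is that, since each $\Phi_n$ is smooth and non-degenerate (recall $\fra_n \ge 2/n$ would be the analogue, but here $\fra_n$ is bounded below by something $n$-dependent anyway because $\fra_n$ is a smooth modification of $(1+r^2)^{-1/2}$ away from a growing set), the $L_2$-solutions $u_n$ of $\mathcal{E}(\Phi_n,\xi)$ exist; they need to be shown to converge to an entropy solution of the singular limit equation $\mathcal{E}(\Phi_\infty,\xi)$, where $\Phi_\infty = \arctan$.

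First, using Theorem \ref{thm:mean-curv-thm} (\ref{it:super-inequality'}) applied to the pair $(u_n,u_{n'})$ for $n \le n'$ (both solutions have the $(\star\star)$-property by Lemma \ref{lem:strong entropy'}, and $u_n \in L_2(\Omega_T; W^1_2(\bT))$ thanks to \eqref{eq:uniform'}), I will show that $(u_n)$ is Cauchy in $L_1(\Omega_T; L_1(\bT))$. The argument is: given $\eps_0>0$, choose $\eps$ small so that the $\eps$-depending terms in \eqref{eq:super inequality'} contribute less than $\eps_0$; the term involving $\D_x[\fra_n](u_n)$ is controlled uniformly in $n$ via \eqref{eq:uniform'}; then choose $\delta = \eps^{2\nu}$ for an appropriate $\nu$ so that $C(\delta,\eps,\lambda) \to 0$; finally, pick $\lambda = N/n$ so that $R_\lambda \ge n$ by construction of $\fra_n$ (since $\fra_n = \fra_\infty$ on $[-n,n]$, or at worst on $[0,n]$ by the defining properties), and then use the uniform $L_2$ integrability \eqref{eq:uniform-m+1'} together with $\eps^{-2} n^{-2} \to 0$ to kill the $I_{|u_n|\ge R_\lambda}$ terms. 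Note that here $\xi = \txi$, so the initial condition difference vanishes.

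Second, let $u$ be the $L_1(\Omega_T;L_1(\bT))$-limit, and pass to a subsequence with $u_n \to u$ almost everywhere on $\Omega_T \times \bT$. By Lemma \ref{lem:strong entropy'} combined with Corollary \ref{cor'} (i), the limit $u$ inherits the $(\star\star)$-property. For membership in $\mathcal{L}$, pass \eqref{eq:uniform'} to the limit via Fatou. For the chain rule property $\D_x[\fra_\infty f](u) = f(u)\D_x[\fra_\infty](u)$: the sequence $[\fra_n f](u_n)$ is bounded in $L_2(\Omega_T; W^1_2(\bT))$ (because $\fra_n \le 1+|\text{bounded correction}|$ uniformly and $f$ is bounded), so along a further subsequence it converges weakly to some limit which one identifies as $[\fra_\infty f](u)$ using $\fra_n \to \fra_\infty$ locally uniformly and the a.e. convergence $u_n \to u$; the chain rule identity then passes through testing against $\phi \in C^\infty(\bT)$ on events $B\in \cF$, just as in Step 1 of the proof of Theorem \ref{thm:main-theorem}. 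For the entropy inequality, apply It\^o's formula to $u \mapsto \int_{\bT} \eta(u)\varrho$ for $u=u_n$ (valid because $u_n$ is an $L_2$-solution and $\Phi_n$ is smooth), then pass $n \to \infty$: the dissipation term $-\int \eta''(u_n)|\nabla[\fra_n](u_n)|^2\phi$ is handled by weak lower semicontinuity (the integrand rewrites as $|\D_x[\sqrt{\eta''}\fra_n](u_n)|^2$, which is weakly convergent in $L_2$ of the measure $\phi\, d\bP\,dt\,dx$), while all other terms converge by dominated convergence using uniform integrability from \eqref{eq:uniform-m+1'} and the local uniform convergence $\fra_n \to \fra_\infty$, $\Phi_n \to \Phi_\infty$.

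Third, uniqueness and the $L_1$-contraction \eqref{eq:main contraction'} follow immediately from Theorem \ref{thm:mean-curv-thm} (\ref{it:contraction'}): if $\tu$ is any entropy solution of $\mathcal{E}(\Phi_\infty,\txi)$, then since the constructed $u$ has the $(\star\star)$-property, the $L_1$ contraction applies and gives \eqref{eq:main contraction'}; taking $\xi=\txi$ yields uniqueness of the constructed solution.

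The main obstacle I anticipate is verifying that the $(\star\star)$-property constant in Lemma \ref{lem:strong entropy'} is truly uniform in $n$ in the present singular setting, so that Corollary \ref{cor'} (i) applies at the limit $n\to\infty$; this reduces to checking that the estimates driving Lemma \ref{lem:F'} (with $m=3$ in the generic version) depend only on $\esssup_t \E\|u_n(t)\|_{L_2}^4$ and on the fixed data $h$, both of which are controlled uniformly in $n$ by \eqref{eq:uniform'} and Assumption \ref{as:noise-coef'}. Secondarily, the weak lower semicontinuity step for the dissipation term requires that $\sqrt{\eta''(u_n)}\D_x[\fra_n](u_n)$ converges weakly to $\sqrt{\eta''(u)}\D_x[\fra_\infty](u)$ in $L_2$; this is where the $n$-uniform bound \eqref{eq:uniform'} on $\D_x[\fra_n](u_n)$ together with a.e.\ convergence of $u_n$ and $\fra_n \to \fra_\infty$ is essential, and is the only genuinely nontrivial analytic step beyond bookkeeping.
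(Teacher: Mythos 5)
Your proposal is correct and follows exactly the route the paper indicates: the paper concludes this theorem "in a similar manner as Theorem \ref{thm:main-theorem}," i.e.\ by combining Proposition \ref{prop:existence}, Lemma \ref{lem:strong entropy'} with Corollary \ref{cor'}~(i), and Theorem \ref{thm:mean-curv-thm}, with the Cauchy argument, the limit passage in the entropy inequality (weak lower semicontinuity for the dissipation term), and the final contraction all as you describe. The details you supply (taking $\lambda$ small so that $R_\lambda\geq n$ since $\fra_n=\fra_{n'}$ on $[-n,n]$, the uniform bounds \eqref{eq:uniform'}--\eqref{eq:uniform-m+1'}, and the $W^1_2$-regularity of the approximants needed for Theorem \ref{thm:mean-curv-thm}~\eqref{it:super-inequality'}) match the paper's intended argument.
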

\begin{remark}
Notice that in Theorem \ref{thm:mean-curv-thm} \eqref{it:super-inequality'}, there is the extra assumption that $u \in L_2(\Omega_T; W^1_2(\bT))$ as compared to Theorem \ref{thm:uniqueness} \eqref{it:super-inequality}. However, this does not cause any complication since the approximating sequence $u_n$ of Proposition \ref{prop:existence} satisfies this condition. 
\end{remark}

\appendix
\section{}
\begin{lemma}                                    \label{lem:Ap}
Let Assumptions \ref{as:A} and \ref{as:sigma} hold. Let $\Phi_n$ and $\xi_n$ be as in Proposition \ref{prop:Phi-n} and \eqref{def:xi-n} respectively, let $u$ be an $L_2$-solution of $\Pi(\Phi_n, \xi_n)$, and let $p \in [2, \infty)$. Then there exists a constant $N$ depending only on $K, N_0, N_1,T,d,m$, and $p$ such that
\begin{equs}              \label{eq:appendix1}
\E \sup_{t \leq T} \| u\|_{L_2(\bT^d)}^p + \E \|\nabla [\fra_n]  (u) \|_{L_2(Q_T)}^p &\leq N ( 1+ \E \|\xi_n\|_{L_2(\bT^d)}^p),
\\                    \label{eq:appendix2}
\E \sup_{t \leq T} \|u\|_{L_{m+1}(Q_T)}^{m+1}+ \E \| \nabla \Phi_n(u)\|_{L_2(\bT_T)}^2 &\leq N(1+\E \|\xi_n\|_{L_{m+1}(\bT^d)}^{m+1}).
\end{equs}
  
\end{lemma}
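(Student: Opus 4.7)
The plan is to obtain both estimates from It\^o's formula applied to suitable functionals of $u$, exploiting a key cancellation between the drift terms involving $a^{ij}$, $b^i$ and the It\^o correction $\int_{\bT^d}\sum_k|\D_{x_i}\sigma^{ik}(u)|^2\,dx$. Since $\fra_n\geq 2/n>0$, an $L_2$-solution satisfies $u\in L_2(\Omega_T;W^1_2(\bT^d))$ and $\nabla\Phi_n(u)\in L_2(\Omega_T;L_2(\bT^d))$ (as already used in \eqref{eq:gradient}), which legitimises all the manipulations below after a standard mollification.

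For \eqref{eq:appendix1} with $p=2$ I apply It\^o's formula to $\|u(t)\|_{L_2(\bT^d)}^2$. Integration by parts converts $2\int u\,\Delta\Phi_n(u)\,dx$ into $-2\|\nabla[\fra_n](u)\|_{L_2(\bT^d)}^2$ via the chain rule $\nabla[\fra_n](u)=\fra_n(u)\nabla u$, producing the good term. The drift contributions $2\int u\,\D_{x_i}(a^{ij}\D_{x_j}u)\,dx$ and $2\int u\,\D_{x_i}b^i(u)\,dx$, integrated by parts, yield $-2\int a^{ij}\D_{x_i}u\,\D_{x_j}u$ and $-2\int b^i(u)\D_{x_i}u$. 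Expanding $\D_{x_i}\sigma^{ik}=\sigma^{ik}_r\D_{x_i}u+\sigma^{ik}_{x_i}$, the It\^o correction equals $\int\bigl[2a^{ij}\D_{x_i}u\D_{x_j}u+2b^i(u)\D_{x_i}u+|\sigma^i_{x_i}(u)|_{l_2}^2\bigr]dx$, so the first two summands cancel \emph{exactly} against the drift — this is the cancellation foreshadowed in the remark after \eqref{eq:main-equation}, and is what allows a priori estimates without any ellipticity/smallness condition on $a^{ij}$. The contribution of $f^i$ is dealt with by writing $\int f^i(u)\D_{x_i}u\,dx=-\int F^i_{x_i}(x,u)\,dx$ via periodicity, with $F^i(x,r):=\int_0^r f^i(x,s)\,ds$, and using the linear growth of $f^i_{x_i}$ from Assumption \ref{as:sigma} to bound everything by $N(1+\|u\|_{L_2}^2)$; the same linear growth controls $\int|\sigma^i_{x_i}(u)|_{l_2}^2\,dx$. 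Thus
\begin{equs}
\|u(t)\|_{L_2}^2 +2\int_0^t\|\nabla[\fra_n](u)\|_{L_2}^2\,ds \leq \|\xi_n\|_{L_2}^2+N\int_0^t(1+\|u(s)\|_{L_2}^2)\,ds+M(t),
\end{equs}
with $M$ a continuous local martingale, and BDG combined with Gr\"onwall close the $p=2$ case. For general $p\geq 2$ I apply It\^o to $(\|u\|_{L_2}^2)^{p/2}$ and absorb the $\E\sup|M|^{p/2}$ contribution into the left-hand side by the usual Young-type argument on BDG.

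For \eqref{eq:appendix2} I apply It\^o to $\int_{\bT^d}|u|^{m+1}dx$, approximating $r\mapsto|r|^{m+1}$ by smooth convex functions to handle the mild non-smoothness at zero (permitted since $m>1$ and $u\in L_2(\Omega;W^1_2)$). The Laplacian contribution produces $-m(m+1)\int|u|^{m-1}|\nabla[\fra_n](u)|^2\,dx$, which is the good term. The same structural cancellation as before dispatches the $a^{ij}$ and $b^i$ pieces of drift and It\^o correction, now weighted by $m(m+1)|u|^{m-1}$. The surviving terms — involving $|\sigma^i_{x_i}|_{l_2}^2|u|^{m-1}$, the non-cancelled parts of $b^i,f^i$, and $f^i_{x_i}|u|^m$ — are all bounded by $N(1+\|u\|_{L_{m+1}}^{m+1})$ thanks to the linear growth in Assumption \ref{as:sigma} and Young's inequality. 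BDG and Gr\"onwall then close the $L_{m+1}$ bound. For the gradient bound on $\Phi_n(u)$ I use $\nabla\Phi_n(u)=\fra_n(u)\nabla[\fra_n](u)$ together with $\fra_n^2(r)\leq N(1+|r|^{m-1})$, the latter obtained by integrating $|\fra_n'(r)|\leq 3K|r|^{(m-3)/2}$ from Proposition \ref{prop:Phi-n} and \eqref{eq:as fra}. This gives
\begin{equs}
\|\nabla\Phi_n(u)\|_{L_2(Q_T)}^2\leq N\|\nabla[\fra_n](u)\|_{L_2(Q_T)}^2+N\int_{Q_T}|u|^{m-1}|\nabla[\fra_n](u)|^2\,dx\,dt,
\end{equs}
both terms already controlled by the preceding estimates.

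The main technical points to get right are (i) the bookkeeping of the drift/It\^o-correction cancellation, which must be checked both in the $L_2$ and the weighted $L_{m+1}$ setting, and (ii) justifying It\^o's formula on the Hilbert-space-valued process $u$ for the functionals $\|\cdot\|_{L_2}^p$ and $\int|\cdot|^{m+1}dx$; both are handled in a routine way by mollifying either in space or in the pointwise convex function, taking advantage of the $n$-dependent $W^1_2$-regularity of $u$ built into the notion of $L_2$-solution.
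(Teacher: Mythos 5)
Your overall strategy is exactly the paper's: It\^o's formula for $\|u(t)\|_{L_2(\bT^d)}^2$, cancellation of the $a^{ij}$ and $b^i$ drift contributions against the It\^o correction, the divergence-structure identity $\int_{\bT^d} f^i(x,u)\D_{x_i}u\,dx=-\int_{\bT^d}[f^i_{x_i}](x,u)\,dx$, then BDG and Gronwall; and for \eqref{eq:appendix2} an It\^o formula for $\|u\|_{L_{m+1}}^{m+1}$. Your derivation of the $\nabla\Phi_n(u)$ bound via $\nabla\Phi_n(u)=\fra_n(u)\nabla[\fra_n](u)$ and $\fra_n^2(r)\le N(1+|r|^{m-1})$ is a legitimate shortcut compared with the paper, which instead applies a further It\^o formula to $u\mapsto\int_{\bT^d}\int_0^u\Phi_n(r)\,dr\,dx$ to produce $\|\nabla\Phi_n(u)\|_{L_2(Q_T)}^2$ directly; your route needs the weighted term $\E\int_{Q_T}|u|^{m-1}|\nabla[\fra_n](u)|^2$, which your $L_{m+1}$ It\^o step does supply.

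The one step you leave unaddressed, and which would fail as written, is the control of the stochastic integral. Its integrand is $(\sigma^{ik}(u),\D_{x_i}u)_{L_2(\bT^d)}$, so the BDG quadratic variation involves $\nabla u$, which is controlled only through the $n$-dependent bound $|\nabla u|\le N(n)|\nabla[\fra_n](u)|$; a naive estimate therefore yields a constant depending on $n$, contradicting the claim that $N$ depends only on $K,N_0,N_1,T,d,m,p$. The fix is the very identity you apply to $f^i$: by periodicity, $(\sigma^{ik}(u),\D_{x_i}u)_{L_2(\bT^d)}=-\int_{\bT^d}[\sigma^{ik}_{x_i}](x,u)\,dx$, which together with Minkowski's inequality and \eqref{eq:sigma-growth} gives $\sum_k(\sigma^{ik}(u),\D_{x_i}u)_{L_2}^2\le N(1+\|u\|_{L_2(\bT^d)}^4)$, with $N$ independent of $n$. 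The same remark applies to the stochastic integral in the $L_{m+1}$ estimate, where the integrand $\int_{\bT^d}|u|^{m-1}u\,\D_{x_i}\sigma^{ik}(x,u)\,dx$ must likewise be rewritten through the conservative structure before BDG. You should also note (as the paper does) that Gronwall is only applicable after a localization argument ensuring the relevant quantities are finite.
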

\begin{proof} 
We start with \eqref{eq:appendix1}. By It\^o's formula we have 
\begin{equs}         
\|u(t)\|_{L_2(\bT^d)}^2= &\|\xi_n\|^2_{L_2(\bT^d)}  -2 \int_0^t ( \D_{x_i} \Phi_n(u)+a^{ij}(u)\D_{x_j} u+ b^i(u)+f^i(u), \D_{x_i} u)_{L_2(\bT^d)} \, ds 
\\  
-& 2\int_0^t (\sigma^{ik}(u),\D_{x_i} u)_{L_2(\bT^d)}   d \beta^k(s)+ \int_0^t  \sum_{k=1}^\infty \| \sigma^{ik}_r(u)\D_{x_i} u +\sigma^{ik}_{x_i}(u)\|^2_{L_2(\bT^d)} \, ds
\\
= &\|\xi_n\|^2_{L_2(\bT^d)}+ \int_0^t  \sum_{k=1}^\infty \|\sigma^{ik}_{x_i}(u)\|^2_{L_2(\bT^d)} -2 ( \D_{x_i} \Phi_n(u)+f^i(u), \D_{x_i}u)_{L_2(\bT^d)}\, ds 
\\   \label{Ito-formula}
-& 2\int_0^t (\sigma^{ik}(u),\D_{x_i} u)_{L_2(\bT^d)}  \, d \beta^k(s).
\end{equs}
Using that $\Phi_n$ is increasing and \eqref{eq:sigma-growth}, we get
\begin{equs}
\|u(t)\|_{L_2(\bT^d)}^2\leq N+\|\xi_n\|^2_{L_2(\bT^d)}&+ \int_0^t \left( N \| u\|^2_{L_2(\bT^d)}+(f^i(u), \D_{x_i}u)_{L_2(\bT^d)} \right)\, ds 
\\
&- 2\int_0^t (\sigma^{ik}(u),\D_{x_i} u)_{L_2(\bT^d)}  \, d \beta^k(s).
\end{equs}
Notice that 
\begin{equs}
|(f^i(u), \D_{x_i}u)_{L_2(\bT^d)}|& = \left|\int_{\bT^d} \D_{x_i}  [f^i](x,u)- [f^i_{x_i}](x,u) \, dx\right| 
\\
& = \left| \int_{\bT^d} [f^i_{x_i}](x,u) \, dx \right| \le 1+\|u\|^2_{L_2(\bT^d)},      \label{eq:est-f}
\end{equs}
where for the last inequality we used \eqref{eq:b-linear-growth}, and the fact that $[f^i] \in W^{1,1}(\bT^d)$
for almost all $(\omega,t)\in \Omega_T$ (which in turn follows from 
\eqref{eq:b-linear-growth} and \eqref{eq:F-linear-growth}). 
Raising to the power $p/2$, taking suprema up to time $t'$ and expectations, gives 
\begin{equs}
\E \sup_{t \leq t'} \|u(t)\|_{L_2(\bT^d)}^p\leq  &N\left[1+\E \|\xi_n\|^p_{L_2(\bT^d)}+  \int_0^{t'}   \E \sup_{t \leq s} \|u(t)\|_{L_2(\bT^d)}^p \, ds \right.
\\          \label{eq:pre-gronwall}
+& \left.\E \sup_{t \leq t'} \left| \int_0^t (\sigma^{ik}(u),\D_{x_i} u)_{L_2(\bT^d)}  \, d \beta^k(s)\right|^{p/2}\right].
\end{equs}
By the Burkholder-Davis-Gundy inequality we have 
\begin{equs}
 \E \sup_{t \leq t'} \left| \int_0^t (\sigma^{ik}(u),\D_{x_i} u)_{L_2(\bT^d)}  \, d \beta^k(s)\right|^{p/2}\leq N \E  \left( \int_0^{t'}\sum_k (\sigma^{ik}(u),\D_{x_i} u)^2_{L_2(\bT^d)}  \, ds\right)^{p/4}.
\end{equs}
As above
\begin{equs}
(\sigma^{ik}(u),\D_{x_i} u)_{L_2(\bT^d)}= \int_{\bT^d}\D_{x_i} [\sigma^{ik}](x,u)-[\sigma^{ik}_{x_i}](x,u) \, dx= -\int_{\bT^d}[\sigma^{ik}_{x_i}](x,u) \, dx.
\end{equs}
By Minkowski's inequality and \eqref{eq:sigma-growth} one has 
\begin{equs}
\sum_{k=1}^\infty \left(\int_{\bT^d} [\sigma^{ik}_{x_i}](x,u)\, dx \right)^2 \leq N (1+\|u\|_{L_2(\bT^d)}^4).
\end{equs}
Consequently, 
\begin{equs}
 & \E \sup_{t \leq t'} \left| \int_0^t (\sigma^{ik}(u),\D_{x_i} u)_{L_2(\bT^d)}  \, d \beta^k(s)\right|^{p/2}\\
 \leq & N + N\E\left( \int_0^{t'}\|u\|_{L_2(\bT^d)}^4 \right)^{p/4}
 \\     \label{eq:to-be-finite}
 \leq & N+  \varepsilon \E \sup_{t \leq t'} \|u(t)\|_{L_2(\bT^d)}^p +\eps^{-1} N \int_0^{t'} \E \sup_{t \leq s} \|u(t)\|_{L_2(\bT^d)}^p \, ds,
\end{equs}
which combined with \eqref{eq:pre-gronwall} gives, 
\begin{equs}               \label{eq:boundL-2-p}
 \E \sup_{t \leq T} \|u(t)\|_{L_2(\bT^d)}^p \leq N(1+\E \|\xi_n\|_{L_2(\bT^d)}^p), 
 \end{equs}
by virtue of Gronwall's lemma,  provided that the right hand side of \eqref{eq:to-be-finite} is finite. The latter can be achieved by means of a standard localization argument the details of which are left to the reader. Going back to \eqref{Ito-formula} after rearranging, raising to the power $p/2$, and taking expectations gives
\begin{equs}
\E \| \nabla [\fra_n](u) \|_{L_2(Q_T)}^p & \leq N \left[ \E \|\xi_n\|^p_{L_2(Q)}+ \E  \left| \int_0^T (\sigma^{ik}(u),\D_{x_i} u)_{L_2(\bT^d)}  \, d \beta^k(s)\right|^{p/2} \right. 
\\
&+ \left. \E \int_0^T \left( \sum_{k=1}^\infty \| \sigma^{ik}_{x_i}(u) \| ^2_{L_2(\bT^d)}+ |(f^i(u), \D_{x_i}u)_{L_2(\bT^d)}|\right)^{p/2} \, ds\right],
\end{equs}
which by \eqref{eq:sigma-growth}, \eqref{eq:est-f}, \eqref{eq:to-be-finite}, and \eqref{eq:boundL-2-p} gives 
\begin{equs}          \label{eq:energy-psi}
\E \| \nabla [\fra_n](u) \|_{L_2(Q_T)}^p & \leq N (1+\E \|\xi_n\|^p_{L_2(\bT^d)}).
\end{equs}
Hence, we have shown \eqref{eq:appendix1}. The estimate
\eqref{eq:appendix2}  is proved in a similar way. Namely, one first applies It\^o's formula for the function $u \mapsto \|u\|_{L_{m+1}(Q)}^{m+1}$ (see, e.g., \cite[Lemma 2]{DG15}) and by arguments similar to those used above, one derives the estimate
\begin{equs}            \label{eq:estimate-L-m+1}
\E \sup_{t \leq T} \|u(t)\|_{L_{m+1}(\bT^d)}^{m+1} \leq N(1+\E \|\xi_n\|_{L_{m+1} (\bT^d)}^{m+1} ).
\end{equs}
Writing It\^o's formula (see, e.g., \cite{K_Ito}) for the function 
$$
u \mapsto  \int_{\bT^d} \int_0^u \Phi_n(r) \, dr \, dx
$$
and using the properties of $\Phi_n$ and \eqref{eq:estimate-L-m+1}, the estimate 
\begin{equs}
 \E \| \nabla \Phi_n(u)\|_{L_2(Q_T)}^2 &\leq N(1+\E \|\xi_n\|_{L_{m+1}(\bT^d)}^{m+1}),
\end{equs}
follows in the same way as \eqref{eq:energy-psi} follows from \eqref{eq:boundL-2-p}.
This finishes the proof.
\end{proof}

\section*{Acknowledgement}

B. Gess acknowledges financial support by the DFG through the CRC 1283 ``Taming uncertainty and profiting from randomness and low regularity in analysis, stochastics and their applications."

\bibliography{PMEgrad}
\bibliographystyle{Martin}
\end{document}